\newcommand{\andf}{\quad\hbox{and}\quad}
\newcommand{\R}{\mathbb{R}}
\renewcommand{\r}{\mathbb{R}}
\newcommand{\T}{\mathbb{T}}
\renewcommand{\div}{\operatorname{div}}
\newtheorem{theo}{\bf Theorem}[section]
\newtheorem{coro}{\bf Corollary}[section]
\newtheorem{lem}{\bf Lemma}[section]
\newtheorem{rem}{\bf Remark}[section]
\newtheorem{defn}{\bf Definition}[section]
\newtheorem{prop}{\bf Proposition}[section]
\newcommand{\vc}[1]{{\bf #1}}
\newcommand{\vv}{\vc{v}}
\newcommand{\vx}{\vc{x}}
\newcommand{\eq}[1]{\begin{equation}
\begin{split}
#1
\end{split}
\end{equation}}
\newcommand{\eqh}[1]{\begin{equation*}
\begin{split}
#1
\end{split}
\end{equation*}}
\newcommand{\vr}{\varrho}
\newcommand{\lr}[1]{\left( #1 \right)}
\newcommand{\ep}{\varepsilon}
\newcommand{\dx}{{\rm d} {x}}
\newcommand{\dt}{{\rm d}\,t }
\newcommand{\dd}{{\rm d}}
\def\ddt{\frac{\rm d}{\dt}}
\newcommand{\dxdt}{\dx \,\dt}
\newcommand{\intOL}[1]{\int_{\Omega_L} #1 \ {\rm d} {y}}
\newcommand{\intOM}[1]{\int_{\Omega} #1 \ \dx}
\newcommand{\intTOL}[1]{\int_0^T\!\!\!\! \int_{\Omega_L} #1 \ {\rm d}{y}\,{\rm d}{\tau}}
\newcommand{\intTOM}[1]{\int_0^T\!\!\!\! \int_{\Omega} #1 \ \dxdt}
\begin{document}

\title*{Singular Cucker-Smale Dynamics}
\titlerunning{SCSD}
\author{Piotr Minakowski, Piotr B. Mucha, Jan Peszek, and Ewelina Zatorska}
\authorrunning{P. Minakowski, P.B. Mucha, J. Peszek, and E. Zatorska}
\institute{Piotr Minakowski \at Institute of Analysis and Numerics, Otto von Guericke University Magdeburg,  Universit\"atsplatz 2, 39106 Magdeburg, Germany \email{piotr.minakowski@ovgu.de}
\and Piotr B. Mucha \at Institute of Applied Mathematics and Mechanics, University of Warsaw,  ul. Banacha 2, 02-097 Warszawa, Poland \email{p.mucha@mimuw.edu.p}
\and Jan Peszek \at Center for Scientific Computation and Mathematical Modeling (CSCAMM),
University of Maryland, College Park, MD 20742-4015, USA.\newline
Institute of Mathematics of the Polish Academy of Sciences,
 ul. \'Sniadeckich 8
00-656 Warszawa, Poland.
\email{j.peszek@mimuw.edu.pl}
\and Ewelina Zatorska \at  Department of Mathematics, University College London,  Gower Street, London WC1E 6BT, United Kingdom \email{e.zatorska@ucl.ac.uk}
}
%
%

\maketitle
\abstract{
The existing state of the art for singular models of flocking is overviewed, starting from microscopic model of Cucker and Smale with singular communication weight, through its mesoscopic mean-filed limit, up to the corresponding macroscopic regime. For the microscopic Cucker-Smale (CS) model, the collision-avoidance phenomenon is discussed, also in the presence of bonding forces and the decentralized control. For the kinetic mean-field model, the existence of global-in-time measure-valued solutions, with a special emphasis on a weak atomic uniqueness of solutions is sketched. Ultimately, for the macroscopic singular model, the summary of the existence results for the Euler-type alignment system is provided, including existence of strong solutions on one-dimensional torus, and the extension of this result to higher dimensions upon restriction on the smallness of initial data.  Additionally, the  pressureless Navier-Stokes-type system corresponding to particular choice of alignment kernel is presented, and compared -- analytically and numerically -- to the porous medium equation. }
\section{Introduction}

The phenomenon of {\it flocking} can be understood as a general tendency of self-propelled particles (or agents) to organize their dynamics based on the behaviour of their neighbours. It is a process in which an ensemble of particles aligns their velocities,  remaining in a close proximity to each other, typical for herds of mammals, schools of fish, or flocks of birds. Surprisingly enough, due to the inherent flexibility of mathematical modelling, this basic phenomenon can be used to describe a variety of seemingly unrelated processes. Indeed, achieving consensus,  synchronisation of motion, or emergence of complex structures and patterns are observable in much more diverse areas like:  distribution of goods \cite{goods}, spacecraft formation \cite{perea}, sensor networks \cite{sensor} and digital media arts \cite{arts}, as well as emergence of languages in primitive societies \cite{lang}. For the survey on the multi-agent systems and their applications we refer the reader to \cite{massurvey}.

The Cucker-Smale (CS) flocking model,  introduced in the seminal work \cite{cuc1} of Cucker and Smale, is a basic example of a flocking model that concentrates on the alignment of particles' velocities. 
In physical language it means that the velocity vector of each 
individual is determined in terms of positions and momenta of other members of the group.
Perhaps the simplest example is a {\it flock of two birds} (see [Section IV in \cite{cuc1}]) with positions and velocities equal to $(x_i(t),v_i(t))$, for $i=1,2$. Their dynamics can be described as follows
%
%
\begin{equation}\label{2birds}
\ddt x=v, \qquad \ddt v = - \frac{ v  }{(1+|x|)^\alpha} \mbox{ \ \ with \ } \alpha >0,
\end{equation}
where 
\begin{equation*}
 x(t)=x_1(t)-x_2(t), \qquad v(t) = v_1(t) - v_2(t).
\end{equation*}
As a basic feature of the model one gets alignment of velocities, i.e.
\begin{equation*}
 v(t) \to 0 \mbox{  \ \ as \ \ } t \to \infty.
\end{equation*}
The complete information on possible dynamics of system \eqref{2birds} is encoded in the communication weight, which in this  case is equal to $(1+|x|)^{-\alpha}$. The communication weight represents the perception of the particles and, in general, dampens the long-range interactions between the particles. The same principle as in \eqref{2birds} applied to multiple agents leads to the general CS flocking model
\begin{subnumcases}{\label{cs}}
\ddt x_i=v_i,\label{cs1}\\
\ddt v_i=\displaystyle\frac{1}{N}	\displaystyle\sum_{j=1}^N(v_j-v_i)\psi(|x_i-x_j|),\label{cs2}
\end{subnumcases}
with the initial conditions
$$ (x_1,\ldots,x_N)(0)=\vx_0,\ (v_1,\ldots,v_N)(0)=\vv_0.$$ 
Here, $N$ denotes the number of the particles, $x_i(t)$ and $v_i(t)$ are the position and the velocity of $i$th particle at the time $t$, and $\psi$ is the aforementioned communication weight, which is usually assumed to be positive, non-increasing and smooth.

After introduction of the CS model in 2007, inspired to some degree by the work or Vicsek {\it et. al.} \cite{vic} form 1995, extensive studies were carried out in various directions including: asymptotics \cite{haliu, cuc1}, pattern formation \cite{park, perea}, collision avoidance \cite{ahn1, cuc2}, variants of the model with preferences \cite{cuc3, Li}, leadership \cite{C-D, shen} and additional deterministic or stochastic forces \cite{bocan, hahaki, car3, habel}. Further directions include the study of the kinetic \cite{hatad, can, haliu} and hydrodynamic \cite{K-M-T, H-K-K1, H-K-K2} limits of the CS particle system and their coupling with classical equations of hydrodynamics \cite{Bae, bae2, choi, mpp}.
Other interesting variants of the CS model revolve around changing the symmetric all-to-all character of the interactions. As examples let us mention interactions with cone-shaped sensitivity regions \cite{C-C-H-S}, weighted normalization \cite{mo} (known as the Motsch-Tadmor model), or topological interactions \cite{Hask, S-T}. For an exhaustive overview of the research on the CS model with a regular communication weight we refer the reader to \cite{B-D-T1}, and references therein. In the present survey we omit certain directions that were covered in \cite{B-D-T1} focusing solely on the case of singular $\psi$. We explain this direction of research below. 


Most of the analysis of system (\ref{cs}) has been done for the case of regular, bounded communication weight. The CS model with such weight can be treated as a nonlinear ODE system with a Lipschitz continuous nonlinearity. Such systems are relatively well understood, as far as their basic properties  like well-posedness, are concerned. Also from the application perspective, regular communication weight is often perfectly suitable. However, certain phenomena exhibiting strongly local interactions require the use of singular communication weights that blow up whenever any two particles collide. One of the most distinctive characteristics of the CS model with singular weight, that opens  further possibilities of application, is that the particles avoid collisions regardless of the initial data.
This chapter is dedicated to the description of the broad dynamics arising from the singular CS model. We  will consider system (\ref{cs}) not only as a law governing the motion of a number of particles, but we will also discuss the possible dynamics in different scales: 
meso- and macroscopic.  The first one is relevant for systems with  very large number of the particles, while the macroscopic (hydrodynamical) level becomes convenient when condensation of particles it too large to distinguish single evolution of any one of them.

The rest of the Chapter is organized as follows. In Section \ref{prelim}, we present the necessary preliminaries. Section \ref{part} is dedicated to the singular CS particle system and, particularly, to the collision-avoidance and its applications. In Section \ref{kinet}, we present the results concerning the kinetic CS equation. Finally, in Section \ref{hydro}, we discuss a particular version of hydrodynamic CS model, known as the fractional Euler alignment system, which we also compare to the porous medium equation. 

\section{Preliminaries}\label{prelim}

Before presenting the results directly related to the singular CS model, let us introduce  basic properties that can be derived from the structure of system \eqref{cs}. First, we define the four basic states of the dynamics.
\begin{defn}\label{basic}~
\begin{enumerate}
\item[i)] We say that $i$th and $j$th particles {\underline{collide}} at time $t_0$ iff
\begin{align*}
x_i(t_0) = x_j(t_0).
\end{align*}
\item[ii)] We say that $i$th and $j$th particles \underline{stick together} at time $t_0$ iff they collide at $t_0$ and
\begin{align*}
v_i(t_0) = v_j(t_0).
\end{align*}
\item[iii)] We say that the system \eqref{cs} \underline{aligns asymptotically} iff
\begin{align*}
\sum_{i,j=1}^N|v_i(t)-v_j(t)|^2\to 0,\qquad \mbox{as}\qquad t\to\infty.
\end{align*} 
\item[iv)] We say that the system \eqref{cs} \underline{flocks asymptotically} iff it aligns asymptotically and there exists a constant $D>0$ such that 
\begin{align*}
\limsup_{t\to\infty}\max_{i,j=1,...,N}|x_i(t)-x_j(t)|\leq D.
\end{align*} 
\end{enumerate}

\end{defn}

\begin{rem}\rm
Definition \ref{basic} is admissible only in the case of the Cucker-Smale particle system \eqref{cs}, while throughout the chapter we will also discuss the kinetic and hydrodynamic regimes. Note, unlike points (i) and (ii), points (iii) and (iv) can be easily generalized to the dynamics of continua. 
\end{rem}

Summing equation \eqref{cs2} with respect to $i=1,...,N$ we deduce that
\begin{align*}
\frac{d}{dt}\sum_{i=1}^Nv_i = \frac{1}{N}\sum_{i,j=1}^N(v_j-v_i)\psi(|x_i-x_j|) = 0,
\end{align*}
which means that the average velocity is constant, i.e.
$\displaystyle \frac{1}{N} \sum_{i=1}^Nv_i = const.$
Therefore, throughout the chapter,  we assume without a loss of generality that
\begin{align}\label{mean0}
\frac{1}{N} \sum_{i=1}^Nv_i = 0,\qquad \mbox{and}\qquad \frac{1}{N} \sum_{i=1}^Nx_i = 0.
\end{align}
This assumption implies equivalence between alignment and  dissipation of kinetic energy:
\eq{\label{Ek}
E_k:=\frac{1}{2}\sum_{i=1}^N v_i^2 = \frac{1}{4N}\sum_{i,j=1}^N|v_i-v_j|^2.
}

The following proposition combines two most important structural properties of the CS system: dissipation of the kinetic energy and boundedness of the velocity.

\begin{prop}\label{dis}
Let $(x,v)$ be a smooth solution to system \eqref{cs}. Then 
\begin{align}\label{dis0}
\ddt\sum_{i=1}^N v_i^2 = -\frac{1}{N}\sum_{i,j=1}^N(v_i-v_j)^2\psi(|x_i-x_j|)\leq 0.
\end{align}
In particular, the kinetic energy $E_k(t)$ is bounded by the initial kinetic energy $E_k(0)$ and on top of that $|v_i(t)|\leq \sqrt{2E_k(0)}$ for all $t\geq 0$ and all $i=1,...,N$.
\end{prop}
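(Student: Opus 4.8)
The plan is to establish the differential identity \eqref{dis0} by a direct computation and then to extract the boundedness statements as immediate consequences. First I would differentiate $\sum_{i=1}^N v_i^2$ in time and substitute the equation of motion \eqref{cs2}, which gives
\begin{align*}
\ddt \sum_{i=1}^N v_i^2 = 2\sum_{i=1}^N v_i \cdot \ddt v_i = \frac{2}{N}\sum_{i,j=1}^N v_i\cdot(v_j-v_i)\,\psi(|x_i-x_j|).
\end{align*}
The key algebraic step is the symmetrization trick: since $\psi(|x_i-x_j|)$ is symmetric under the interchange of $i$ and $j$, I would swap the summation indices in the double sum, average the original expression with the swapped one, and use the elementary identity $2v_i\cdot(v_j-v_i) + 2v_j\cdot(v_i-v_j) = -2|v_i-v_j|^2$ (equivalently, $a\cdot(b-a)$ symmetrized in $a,b$ yields $-\tfrac12|a-b|^2$). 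This collapses the right-hand side to $-\frac{1}{N}\sum_{i,j=1}^N |v_i-v_j|^2\,\psi(|x_i-x_j|)$, which is manifestly nonpositive because $\psi>0$; this proves \eqref{dis0}.

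For the boundedness of kinetic energy, \eqref{dis0} together with \eqref{Ek} shows $\ddt E_k(t) = \tfrac12 \ddt \sum_i v_i^2 \le 0$, so $E_k$ is non-increasing and hence $E_k(t)\le E_k(0)$ for all $t\ge 0$. Finally, for the pointwise velocity bound, I would observe that by \eqref{mean0} and \eqref{Ek} each individual term satisfies $\tfrac12|v_i(t)|^2 \le E_k(t) \le E_k(0)$, whence $|v_i(t)|\le\sqrt{2E_k(0)}$ for every $i$ and every $t\ge 0$.

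I do not expect any genuine obstacle here: the argument is a standard energy estimate. The only point requiring a little care is that the statement presupposes a \emph{smooth} solution on the time interval in question, so that the differentiation and the fundamental theorem of calculus are justified — this is exactly why the hypothesis ``$(x,v)$ a smooth solution'' is imposed, and in the singular case it implicitly restricts attention to collisionless intervals where $\psi(|x_i-x_j|)$ is finite and the classical ODE theory applies. One should also note the sign convention: positivity of $\psi$ (not merely nonnegativity) is what is used, matching the standing assumptions on the communication weight.
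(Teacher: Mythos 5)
Your proposal is correct and follows essentially the same route as the paper: the index-swapping (symmetrization) identity to turn $\frac{2}{N}\sum_{i,j} v_i\cdot(v_j-v_i)\psi(|x_i-x_j|)$ into $-\frac{1}{N}\sum_{i,j}(v_i-v_j)^2\psi(|x_i-x_j|)$, followed by the immediate monotonicity of $E_k$ and the pointwise bound $|v_i|\leq\sqrt{2E_k(0)}$. The only cosmetic remark is that the bound $\tfrac12|v_i(t)|^2\leq E_k(t)$ needs neither \eqref{mean0} nor \eqref{Ek}, being just the statement that one nonnegative term is dominated by the whole sum.
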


\begin{proof}
By \eqref{cs2}, we have
\eq{\label{trick}
\ddt \sum_{i=1}^N v_i^2 &= \frac{2}{N}\sum_{i,j=1}^N v_i\cdot (v_j-v_i)\psi(|x_i-x_j|)\\
&= \frac{1}{N}\sum_{i,j=1}^Nv_i\cdot (v_j-v_i)\psi(|x_i-x_j|) + \frac{1}{N}\sum_{i,j=1}^Nv_j\cdot (v_i-v_j)\psi(|x_i-x_j|)\\
&= -\frac{1}{N}\sum_{i,j=1}^N(v_i-v_j)^2\psi(|x_i-x_j|),
}
where equality \eqref{trick} is obtained by swapping the indexes $i$ and $j$ in the second term.\hfill$\square$
\end{proof}

\begin{rem}\rm
Proposition \ref{dis} is based only on the structure of the CS system, and holds for any non-negative communication weight $\psi$. The symmetry of $x\mapsto \psi(|x|)$ allows to swap indexes in \eqref{trick}.
\end{rem}

Throughout the chapter, we assume that the singular communication weight is of the form
\begin{align}\label{psing}
\psi(s) = s^{-\alpha},\qquad \alpha>0.
\end{align}
This form is convenient for distinction between two cases: the \emph{weakly singular} case corresponding to $\alpha\in(0,1)$, and the \emph{strongly singular} case corresponding to $\alpha\geq 1$.
However, in reality, such exact form is not necessary, and the results can be generalized to an arbitrary weight that is positive, locally Lipschitz continuous on $(0,\infty)$, and singular at $0$. In such case the weakly and strongly singular cases translate to integrability of the weight around $0$, or its lack, respectively.

\section{Singular Cucker-Smale model: particle system}\label{part}
\subsection{Motivation}
One of the most desirable qualitative features of the CS system \eqref{cs}, either for standard or for the singular weight,  is collision avoidance. Such property is required in the fields where the agents naturally avoid collisions, such as, behavior of flocks of animals or control over autonomous sensors or robots.
One of the approaches in the study of collision avoidance comes from \cite{ahn1} where the authors establish a set of initial data such that the regular CS particle system admits no collisions. Roughly speaking, the initial total kinetic energy has to be small compared to the initial minimal distance between the particles. Then the alignment force on the right-hand side of \eqref{dis0} dissipates the kinetic energy and the rate of the dissipation increases as the distance between particles becomes smaller.

This effect motivates consideration of the CS with the singular communication weight \eqref{psing}, for which one expects that the rate of dissipation of the kinetic energy  becomes infinite as the particles collide. It turns out to be a good approach as it lead to collision-avoidance that is {\it unconditional}, i.e. it does not rely on the initial configuration.
The CS particle system with weight \eqref{psing} provides an interesting mathematical challenge owing to the fact that its nonlinear right-hand side is not Lipschitz continuous. However, since the system looses its regularity only at times of collisions, careful qualitative analysis of the model provides information required for the quantitative analysis. The following subsections are dedicated to the summary of results in  two main areas of analysis of this model: simultaneous quali-quantitative analysis and asymptotics, based on papers \cite{ccmp, jpe, jps}. 

At the end of the section we also provide examples of the influence of the singular kernel when coupled with a pattern-inducing control.

\subsection{Collision-avoidance}\label{Sec:3.2}

 The dynamics of the singular CS model is defined by the right-hand side of \eqref{cs2} and particularly by the interplay between 
$(v_j-v_i)$ which tends to zero and $\psi(|x_i-x_j|)$ which tends to infinity at the time of collision between $i$th and $j$th particle. Interestingly, the results of such interplay vary dramatically depending on the values of exponent $\alpha$.
Suppose $N$ particles are governed by the CS model with a singular weight \eqref{psing} on the time interval $[0,T]$. Then, equalities \eqref{Ek} and \eqref{dis0} put together give
\begin{align*}
\frac{d}{dt}E_k = -\frac{2}{N}\sum_{i,j=1}^N(v_i-v_j)^2
\psi(|x_i-x_j|).
\end{align*}
It implies that the kinetic energy dissipates at the integrable rate, i.e.
\eq{\label{diss}
\frac{2}{N}\int_0^T\sum_{i,j=1}^N(v_i-v_j)^2\psi(|x_i-x_j|)\leq
E_k(0).
}
Therefore, if the function
\begin{align}\label{theta}
\theta_{ij}(t):=\psi(|x_i(t)-x_j(t)|)
\end{align}
integrates to infinity in a neighbourhood of $t_0\in[0,T]$ then, in order to ensure \eqref{diss}, we necessarily need 
\begin{align}\label{st}
(v_i(t)-v_j(t))^2\to 0,
\end{align}
for $t\to t_0$. This suggests that 
to analyze collisions one should first analyze maps $\theta_{ij}$. In the case of strongly singular kernel $\psi$ with $\alpha\geq 1$, assuming that $i$th and $j$th particles 
collide at $t_0$, we have for $s\nearrow t_0$
\begin{align*}
|x_i(s)-x_j(s)|\leq M(t_0-s),
\end{align*}
where, by Proposition \ref{dis}, $M:=\sqrt{2E_k(0)}$ is the uniform bound for the velocity. Therefore,
\begin{align*}
\theta_{ij}(s)=\psi(|x_i(s)-x_j(s)|)\geq \psi(M(t_0-s)) = M^{-\alpha}|t_0-s|^{-\alpha},
\end{align*}
which for $\alpha\geq 1$ is non-integrable in any neighbourhood of $t_0$, and thus indeed \eqref{st} is necessary. On the other hand, for $\alpha\in(0,1)$ the function $|t_0-s|^{-\alpha}$ is integrable and the above argumentation is inconclusive. 
It was shown in \cite{jpe} that with $\alpha\in(0,1)$ functions $\theta_{ij}$ can be either integrable or non-integrable.

We summarize the above consideration in the following remark.
\begin{rem}\label{dyn}\rm
Assuming that $t_0$ is a time of collision between $i$th and $j$th particles we learn the following:
\begin{enumerate}
\item For $\alpha\geq 1$, function  $\theta_{ij}$ is non-integrable at $t_0$. Consequently $(v_i-v_j)^2\to 0$. Thus if any particles collide, then they stick together at the same time. Later we show that collisions are actually impossible.
\item For $\alpha\in(0,1)$, function $\theta_{ij}$ may be either integrable or nonintegrable at $t_0$. If it is integrable we have a collision between particles and 
if it is nonintegrable then $(v_i-v_j)^2\to 0$ and  particles stick together.
\end{enumerate}
\end{rem}

With this information we present  main results concerning the quali-quantitative analysis of the singular CS model. First, let us focus on the case of strongly singular weight with $\alpha\geq 1$, which leads to collision-avoidance. 

\begin{theo}[\cite{ccmp}]\label{thm1}
Let $\alpha\geq 1$ and $T\in(0,\infty]$. Then the CS particle system with singular weight \eqref{psing} admits a unique non-collisional smooth solution $(\vx,\vv)$ provided that  initial data are non-collisional (we recall Definition \ref{basic}$(i)$ for the notion of a collision).
\end{theo}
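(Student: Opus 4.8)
The plan is to run a local existence argument away from collisions, combined with an a priori estimate that rules out collisions in finite time, so that the local solution extends globally. Since the right-hand side of \eqref{cs2} is smooth (indeed analytic) on the open set $\Omega_{\mathrm{nc}}:=\{(\vx,\vv):x_i\neq x_j\text{ for }i\neq j\}$, the Picard--Lindel\"of theorem gives a unique smooth local-in-time solution emanating from any non-collisional initial datum, and a maximal time of existence $T_{\max}\in(0,\infty]$. The content of the theorem is that no collision occurs before $T_{\max}$ and that $T_{\max}=\infty$. First I would record that by Proposition \ref{dis} the velocities stay bounded, $|v_i(t)|\le M:=\sqrt{2E_k(0)}$, hence positions grow at most linearly and cannot blow up in finite time; consequently the only way the solution can fail to continue is that two particles collide, i.e. $\min_{i\neq j}|x_i(t)-x_j(t)|\to 0$ as $t\nearrow T_{\max}<\infty$.

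The heart of the matter is therefore a quantitative lower bound on the minimal inter-particle distance. I would introduce $d(t):=\min_{i\neq j}|x_i(t)-x_j(t)|$ (a Lipschitz function, differentiable a.e.) and, on an interval approaching a putative collision time $t_0$, control its decay. The key observation, already foreshadowed in the excerpt via Remark \ref{dyn}, is the interplay between the relative position $x_{ij}:=x_i-x_j$ and relative velocity $v_{ij}:=v_i-v_j$: one has $\frac{d}{dt}|x_{ij}|\ge -|v_{ij}|$, while from \eqref{cs2}, $\frac{d}{dt}v_{ij}=\frac1N\sum_k(v_k-v_i)\psi(|x_i-x_k|)-\frac1N\sum_k(v_k-v_j)\psi(|x_j-x_k|)$, and the dominant singular contribution as $|x_{ij}|\to0$ is $-v_{ij}\,\psi(|x_{ij}|)=-v_{ij}|x_{ij}|^{-\alpha}$. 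The strategy is to build a Lyapunov-type quantity coupling $|x_{ij}|$ and $|v_{ij}|$ — morally $|v_{ij}| + \Psi(|x_{ij}|)$ where $\Psi'(s)=\psi(s)$, so $\Psi(s)=\frac{s^{1-\alpha}}{1-\alpha}$ for $\alpha\neq1$ and $\Psi(s)=\log\frac1s$ for $\alpha=1$ — and to show this quantity cannot force $|x_{ij}|$ to $0$: each singular push $-v_{ij}|x_{ij}|^{-\alpha}$ on $v_{ij}$ simultaneously decelerates the approach, because the sign of $v_{ij}\cdot x_{ij}$ governs whether the pair is approaching, and the restoring term $\Psi(|x_{ij}|)\to\infty$ as $|x_{ij}|\to0$ exactly when $\alpha\ge1$. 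Summing over all pairs (or working with the pair realizing the minimum, using a Rademacher/Danskin argument to differentiate $d$), one obtains that $d(t)$ stays bounded below by a positive constant on every finite interval, with the bound depending only on $N$, $M=\sqrt{2E_k(0)}$, and $d(0)$.

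With the uniform lower bound $d(t)\ge d_*>0$ on $[0,T_{\max})\cap[0,T]$ in hand, the right-hand side of \eqref{cs2} is uniformly Lipschitz on the relevant region, so the standard continuation criterion forces $T_{\max}\ge T$; since $T$ was arbitrary (including $T=\infty$), the solution is global and non-collisional, and uniqueness is inherited from the local Picard theory on $\Omega_{\mathrm{nc}}$. Smoothness (in fact $C^\infty$, even real-analytic) then propagates from the smoothness of the vector field. I expect the main obstacle to be the second step: the coupling between $|x_{ij}|$ and $|v_{ij}|$ is genuinely multi-particle — the evolution of $v_{ij}$ contains not only the isolated singular self-interaction term but also singular contributions from a third particle $k$ approaching $i$ or $j$, so one must argue that near a two-particle collision no third particle interferes (e.g. by a hierarchy/bootstrap on the sorted distances, or by absorbing cross terms using that $\psi(|x_i-x_k|)$ is controlled whenever $|x_i-x_k|\ge d(t)$), and one must handle the non-smoothness of $t\mapsto d(t)$ at times when the minimizing pair changes. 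The borderline case $\alpha=1$, where $\Psi$ is only logarithmically divergent, requires the estimate to be carried out with some care but still closes because $\int_0 \psi = +\infty$.
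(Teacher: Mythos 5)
Your overall strategy is the same as the paper's: local well-posedness on the non-collisional set, the velocity bound from Proposition \ref{dis}, a coupled estimate on $|v_{ij}|$ and the primitive $\Psi(|x_{ij}|)$ whose divergence at $0$ for $\alpha\geq 1$ forbids collisions, and then continuation. Indeed, for two particles the paper does exactly what your Lyapunov quantity encodes: from $\frac{d}{dt}|v|=-\psi(|x|)|v|$ and Gronwall one gets $|\Psi(|x(t)|)|\leq |\Psi(|x(0)|)|+|v_0|$, which contradicts $|x(t)|\to 0$ since $\Psi$ blows up at $0$ precisely when $\alpha\geq 1$.

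The gap is in the multi-particle step, which you flag but do not resolve, and one of your proposed fixes would fail. Absorbing third-particle terms ``using that $\psi(|x_i-x_k|)$ is controlled whenever $|x_i-x_k|\geq d(t)$'' is vacuous exactly in the dangerous scenario: if a cluster of three or more particles collides simultaneously at $t_0$, then for $k$ in the same cluster $|x_i-x_k|$ is itself of order $d(t)\to 0$, so $\psi(|x_i-x_k|)$ is singular and enters the pairwise equation for $v_{ij}$ with no definite sign; likewise ``near a two-particle collision no third particle interferes'' assumes away precisely the cluster case, and differentiating the minimal distance pair-by-pair does not see the cluster structure. The paper's resolution is different and is the essential extra idea: split the particles into the group $A$ of all particles colliding with a given particle at $t_0$ and the group $B$ of the rest, observe that near $t_0$ the two groups are separated by a positive distance so the $A$--$B$ interactions are bounded, and then run the Gronwall/$\Psi$-divergence argument at the level of the whole cluster $A$, using the dissipative structure of the intra-cluster interactions (half of the singular interaction within $A$ dominates the bounded influence of $B$, the other half drives the two-particle-type estimate). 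Without this cluster-level argument (or a genuine substitute), your scheme proves collision avoidance only for isolated binary near-collisions. A minor further point: your claimed lower bound on $d(t)$ depending only on $N$, $\sqrt{2E_k(0)}$ and $d(0)$ is stronger than what is needed or established; a contradiction at the putative first collision time suffices for the continuation argument.
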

{\it Idea of the proof.}
Local existence of solutions is clear as the system is singular only at times of collision, and we begin with non-collisional initial data. This solution can be prolonged until the first time of collision, which as we prove, never happens.
As observed in Remark \ref{dyn}, if the particles collide, their relative velocity tends to zero. However,  the rate of the alignment outweighs the speed with which the particles approach each other, and as a result, the collision never occurs. We present the proof of this fact for the simplest case of two-particle system in $\R$. 

Assume that $t<t_0<\infty$, where $t_0$ is a time of the first collision, and denote $x(t):=x_1(t)-x_2(t)$ and $v(t):=v_1(t)-v_2(t)$. Using  \eqref{dis0}, we get
\eqh{
\ddt v^2 = -2\psi(|x|)v^2,}
equivalently, dividing by $|v|$, we have
\eqh{
\ddt |v| = -\psi(|x|)|v|,
}
and so, by Gronwall's lemma
\begin{align*}
|v(s)|\leq e^{-\int_0^s\psi(|x(\sigma)|)d\sigma}|v_0|,
\end{align*}
for any $s<t_0$. Then the primitive function of $\psi$, denoted by $\Psi$, satisfies
\begin{align*}
|\Psi(|x(t)|)|\leq |\Psi(|x(0)|)| + |v_0|\int_0^{t}\psi(|x(s)|)e^{-\int_0^s\psi(|x(\sigma)|)d\sigma}\dd s\leq C.
\end{align*}
Since $\Psi$ is singular at $0$ (singularity of the order $s^{1-\alpha}$ for $\alpha>1$ and of the order $\ln s$ for $\alpha=1$), we conclude that there exists $\delta>0$ such that $|x(t)|\geq\delta$ as $t\nearrow t_0$ which contradicts the assumption that $t_0$ is a time of collision.

In \cite{ccmp} we expand on the above idea. We divide the particles into two groups: $A$ -- of all particles colliding with $i$th particle at the time $t_0$, and $B$ -- of all remaining particles. 
Then, in a neighbourhood of $t_0$ there exists a minimal distance between groups $A$ and $B$. Therefore the dynamics of group $A$ is influenced by the singular interaction within $A$ and a negligible in comparison, bounded interaction between the groups. Thus half of the singular interaction within $A$ outweighs the influence of $B$ and then the remaining half is used to prove that the collision cannot happen similarly to the case of two particles. \hfill$\square$

Our next goal is to discuss the case of $\alpha\in(0,1)$. With $\alpha\geq 1$ trajectories of particles cannot cross while, as we show later, the weakly singular kernel with $\alpha<1$ admits not only collisions between the particles but also sticking. This leads to two hypothetical problems.

The first problem is related to the uniqueness. Since $\psi$ is singular at $0$ the loss of uniqueness can happen at any time at which any two particles are stuck together.
For example, if $i$th and $j$th particle have the same position and velocity in the time interval $(t_1,t_2)$, then their trajectories may separate at any time $t>t_2$ as 
one observers in the case of the basic example $\dot{x}=x^\frac{1}{3}$. 
The second problem rises from the fact that upon approaching the first (or any other) time of sticking of particles, we lose the absolute continuity of the solution, and its derivative cannot be defined in the weak sense.

We deal with these two delicate hypothetical problems using an, admittedly, heavy-handed approach. In a neighbourhood of any time at which no particles are stuck together the problems do not exist. As we approach a time of sticking $t_0$ we say that the solution exists in a classical sense up to any $t<t_0$ and  its position and velocity components are continuous at $t_0^-$. Thus, even though the classical meaning of the solution is no longer available, we still can prolong it up to $t_0$. When  $t_0$ is reached, we redefine the system in order to remove the possibility of separation of the trajectories. Then, we re-initiate the solution starting from $t_0$ which, again, is classical up to the second time of sticking $t_1$. This procedure is repeated up to at most $N$ times, since this is the maximum number of sticking between the particles (after we ensured that the trajectories cannot separate). To employ this strategy we denote
\begin{align*}
B_i(t):=\{k=1,...,N:x_k(t)\neq x_i(t)\ \mbox{or}\ v_k(t)\neq v_i(t)\},
\end{align*}
which can be interpreted as the set of indexes of all the particles that are distinct from $i$th particle. Then we define the solution as follows.
\begin{defn}[Piecewise weak solutions]\label{piecdef}
Let $\{t_n: n=0,1, ...,K\}$ with $K\leq N$ be the set of all times when the particles stick together. For $n\geq -1$, on each interval $[t_n,t_{n+1}]$ (we assume that $t_{-1}=0$) we consider the problem
\begin{subnumcases}{\label{pieccs}}
\ddt x_i = v_i,\\
\ddt v_i = \displaystyle\frac{1}{N} \sum_{j\in B_i(t_n)}(v_j-v_i)\psi(|x_i-x_j|)
\end{subnumcases}

with the initial data $(\vx(t_n),\vv(t_n))$.

We say that $(\vx,\vv)$ is a \underline{piecewise weak} solution of \eqref{cs} in the time interval $[0,T)$ with the initial data $(\vx_0,\vv_0)$ if and only if the function $(\vx,\vv)(t)$ is continuous on $[0,T)$ and it solves \eqref{pieccs} on each interval $[t_n,t]$ for all $t\in[t_n,t_{n+1}]$ and $(\vx(t_{-1}),\vv(t_{-1}))=(\vx_0,\vv_0)$.
\end{defn}

\begin{theo}[\cite{jpe,jps}]\label{piecthm}
Let $\alpha\in(0,1)$ and $T\in(0,\infty]$. Then for any initial data $(\vx_0,\vv_0)\in\R^{2d}$, system \eqref{cs} with the singular communication weight \eqref{psing} admits a unique solution in the sense of Definition \ref{piecdef}.
\end{theo}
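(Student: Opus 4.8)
{\it Proof strategy.} The plan is twofold: first to construct a solution by stitching together classical (or Carath\'eodory) solutions on the intervals between consecutive sticking times, and then to prove uniqueness, again piece by piece. I expect uniqueness to be the genuinely delicate point, because $\psi$ fails to be Lipschitz at the origin, so $\dot{x}=x^{1/3}$-type branching is a priori possible precisely where particles are stuck together; the frozen interaction set $B_i(t_n)$ appearing in \eqref{pieccs} is the device that rules this out.

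\emph{Existence on one piece.} Fix $t_n$ together with the data $(\vx(t_n),\vv(t_n))$ and consider \eqref{pieccs}. Away from the collision set $\{\exists\, i\neq j:\ x_i=x_j\}$ its right-hand side is locally Lipschitz, since $\psi$ is smooth on $(0,\infty)$, so Cauchy--Lipschitz gives a unique local classical solution, and by Proposition \ref{dis} the velocities stay bounded by $\sqrt{2E_k(0)}$, which excludes finite-time blow-up; hence the solution continues until two particles collide. By Remark \ref{dyn}(2) a collision at some time $\tau$ is either a sticking or a transversal crossing along which $\theta_{ij}$ is integrable near $\tau$; in the crossing case I would combine $|\ddt v_i|\le\frac1N\sum_j|v_i-v_j|\,\psi(|x_i-x_j|)$ with Cauchy--Schwarz and the dissipation bound \eqref{diss} to obtain $\ddt v_i\in L^1$ near $\tau$, so that $\vv$ --- hence $\vx$ --- extends absolutely continuously through $\tau$ and the solution continues in the Carath\'eodory sense. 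Iterating, I obtain a continuous solution of \eqref{pieccs} on $[t_n,t_{n+1}]$, where $t_{n+1}$ is the first sticking time after $t_n$ (and $t_{n+1}=T$ if none occurs); the one-sided limits $(\vx,\vv)(t_{n+1}^-)$ exist, because positions are globally Lipschitz and velocities are Cauchy at $t_{n+1}^-$ by the same $L^1$ bound on $\ddt v_i$, and I set $(\vx,\vv)(t_{n+1})$ equal to them.

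\emph{Cluster structure, finiteness, gluing.} The structural fact that finishes the existence part is that under \eqref{pieccs} particles stuck at $t_n$ cannot later separate on $[t_n,t_{n+1}]$: if $x_i(t_n)=x_k(t_n)$ and $v_i(t_n)=v_k(t_n)$ then $k\notin B_i(t_n)$ and $B_i(t_n)=B_k(t_n)$, so $(x_i-x_k,v_i-v_k)$ starts at zero and solves a linear differential inequality with coefficients that are bounded on compact subintervals avoiding the collision times and integrably singular at them --- whence $x_i\equiv x_k$, $v_i\equiv v_k$ by Gronwall. Consequently each sticking time strictly lowers the non-negative integer $\sum_{i=1}^N\#B_i(\cdot)\le N(N-1)$, so only finitely many pieces occur (at most $K\le N$, as in Definition \ref{piecdef}), and concatenating them yields a solution on $[0,T)$ in the required sense; arbitrary initial data are admissible because $B_i(\cdot)$ is already frozen at $t_{-1}=0$.

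\emph{Uniqueness --- the main obstacle.} Uniqueness reduces to a per-piece statement: with the interaction set $B_i(t_n)$ fixed, \eqref{pieccs} has at most one solution for as long as no new pair sticks together. Granting this, the partition $\{t_n\}$ and the cluster structure of any piecewise weak solution must coincide with those of the solution built above --- apply the per-piece statement on $[t_{-1},t_0]$, then on $[t_0,t_1]$, and so on, using that the first sticking time is intrinsic to a solution --- and hence uniqueness on $[0,T)$ follows by induction over the finitely many pieces. On subintervals where all mutual distances stay bounded below the per-piece statement is immediate from Cauchy--Lipschitz, and there the frozen $B_i(t_n)$ makes the already-stuck particles obey one and the same smooth equation, forbidding $\dot{x}=x^{1/3}$-type separation; the remaining difficulty is to carry the comparison through the non-sticking crossings, where $\psi$ is evaluated at a vanishing argument. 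There one would run a Gronwall estimate for the squared difference $e(t)=\sum_i\big(|x_i-y_i|^2+|v_i-w_i|^2\big)$ of two solutions with the same data (writing $(\vc{y},\vc{w})$ for the second): the Lipschitz part of the field contributes $Ce$, while the singular part --- active only near crossings, where both solutions reopen the relevant distance at a linear rate --- is handled by splitting the time integral into a region where distances exceed a threshold $\delta$ (there $\psi$ is Lipschitz) and small neighbourhoods of the crossings (where the contribution is small by integrability of $\theta_{ij}$ and by transversality), after which Gronwall gives $e\equiv0$. Supplying these quantitative collision estimates --- the technical core of \cite{jpe,jps} --- is the step I expect to be hardest; everything else is continuation together with the combinatorial cluster-merging count above.\hfill$\square$
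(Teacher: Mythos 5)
Your overall architecture (classical existence between collisions, continuation through non-sticking collisions via integrability of $\theta_{ij}$, gluing at sticking times, per-piece uniqueness plus a cluster-merging count) is the same as the paper's, but there is one concrete gap at exactly the point the paper singles out as the real difficulty of the existence part. You justify the existence of the one-sided limit $\vv(t_{n+1}^-)$ at a sticking time ``by the same $L^1$ bound on $\ddt v_i$''. That bound was obtained by Cauchy--Schwarz against the dissipation \eqref{diss}, which requires $\int\theta_{ij}\,\dt<\infty$ near the time in question; but a sticking time is precisely where this can fail --- it is the non-integrable branch of Remark \ref{dyn}(2), and the paper states explicitly that absolute continuity of the velocity is lost upon approaching a time of sticking. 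In the model two-particle computation, on the sticking trajectory $|x_i-x_j|\sim(t_0-t)^{1/\alpha}$ one has $\theta_{ij}\sim(t_0-t)^{-1}\notin L^1$, so your Cauchy--Schwarz route gives nothing; the fact that $(v_j-v_i)\psi(|x_i-x_j|)$ is nevertheless integrable (equivalently, that $\vv$ is continuous at $t_{n+1}^-$) rests on quantitative rates for the relative velocity of the sticking cluster, and this is exactly the ``careful elementary analysis of the dynamics'' that the paper attributes to \cite{jpe} as the only remaining problem. As written, your argument is circular at this step and the gluing of pieces is not justified.

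On uniqueness your level of detail matches the paper's (which simply defers to \cite{jps}), and your reduction to a per-piece statement plus the frozen sets $B_i(t_n)$ is the right skeleton; but note that the $\delta$-splitting Gronwall scheme you sketch is not yet a proof. Near a crossing the difference of the singular coefficients is of size $\min(|x_i-x_j|,|y_i-y_j|)^{-1-\alpha}$ times the positional difference, so closing the estimate needs the lower bound $|y_i-y_j|\gtrsim|t-t_0|$ for the \emph{second} solution as well, which in turn presupposes continuity of its velocity at the crossing and a genuinely quantitative transversality argument --- these are the collision estimates of \cite{jpe,jps} that you acknowledge but do not supply. The first gap (velocity continuity at sticking) is the one you should repair, since it invalidates the construction itself rather than only the comparison argument.
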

{\it Idea of the proof.}
The proof of existence can be found in \cite{jpe}, while the proof of uniqueness can be found in \cite{jps}. Existence outside of times of collision is straightforward. At any time of collision we prove that due to the relatively small singularity exponent $\alpha$ the function $\theta_{ij}$ (see \eqref{theta}) is integrable and thus $\vv$ is absolutely continuous, which grants existence also in points of collision. Existence in points of sticking is dealt with mostly by the definition of the solution itself. Continuity of the velocity at the times of sticking is the only remaining problem, which is resolved by a careful elementary analysis of the dynamics.\hfill$\square$

It turns out, see \cite{jps}, that after restricting the range of singularity to $\alpha\in(0,\frac{1}{2})$, one obtains existence of classical solutions which, by uniqueness, coincide with the piecewise-weak solutions.

\begin{theo}[\cite{jps}]\label{piecthm2}
Let $\alpha\in(0,\frac{1}{2})$ and $T\in(0,\infty]$. Given initial data $(\vx_0,\vv_0)\in\R^{2d}$, system \eqref{cs} with the singular communication weight \eqref{psing} admits a unique classical solution with absolutely continuous velocity component.
\end{theo}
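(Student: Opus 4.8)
The plan is to upgrade the piecewise weak solution produced by Theorem~\ref{piecthm} — which exists and is unique already for the whole range $\alpha\in(0,1)$ — to a genuine classical solution once $\alpha<\tfrac12$, and then to read off uniqueness in the class of classical solutions from the uniqueness of piecewise weak solutions. So let $(\vx,\vv)$ be that solution on $[0,T)$, let $0\le t_0<\dots<t_K<T$, $K\le N$, be its sticking times, and recall it is continuous and solves \eqref{pieccs} on each $[t_n,t_{n+1}]$. Away from collisions the right-hand side of \eqref{cs} is locally Lipschitz and the solution is smooth; at a non-sticking collision one argues exactly as for Theorem~\ref{piecthm}, using the velocity bound of Proposition~\ref{dis} to see that $\theta_{ij}$ (cf.~\eqref{theta}) is integrable, so $\vv$ stays absolutely continuous. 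It therefore remains to analyse the behaviour near a sticking time $t_0$ and to check two points: that $\vv$ is absolutely continuous across $t_0$, and that the terms dropped when passing from \eqref{cs} to \eqref{pieccs} are negligible, so that $(\vx,\vv)$ actually solves the unmodified system~\eqref{cs} a.e.

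The analytic core is a quantitative description of a collapsing cluster, along the lines of the proof of Theorem~\ref{thm1}. Fix a sticking time $t_0$, split the particles into the set $A$ of those colliding with a chosen $i$th particle at $t_0$ and the rest $B$; near $t_0$ the particles of $B$ are bounded away from $A$, so their effect on $A$ is a bounded perturbation. Proposition~\ref{dis} gives $|v_k|\le M:=\sqrt{2E_k(0)}$, hence $|x_i(s)-x_j(s)|\le 2M(t_0-s)$; the key point is the matching lower bound, reflecting that a pair which sticks rather than crosses cannot approach faster than at rate $|x_i-x_j|^{1-\alpha}$. For a single colliding pair this is the explicit two-particle computation behind Theorem~\ref{thm1}, where the sticking trajectory satisfies $v=-\tfrac{2}{1-\alpha}|x|^{1-\alpha}$; in general I would prove, by induction on the nested structure of the simultaneous sub-collisions inside $A$ (from the innermost pairs outward, absorbing at each step the bounded influence of the remaining members of $A$ and of $B$), the bounds
\begin{equation}\label{clusterbd}
c\,(t_0-s)^{1/\alpha}\le|x_i(s)-x_j(s)|,\qquad |v_i(s)-v_j(s)|\le C\,(t_0-s)^{(1-\alpha)/\alpha}\qquad(i,j\in A,\ s\nearrow t_0).
\end{equation}

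Granting \eqref{clusterbd}, the only possibly singular contribution to $\ddt v_i$ near $t_0$ is, for $j\in A$,
\eqh{|v_i(s)-v_j(s)|\,\psi(|x_i(s)-x_j(s)|)\le C\,(t_0-s)^{(1-\alpha)/\alpha}(t_0-s)^{-1}=C\,(t_0-s)^{(1-2\alpha)/\alpha},}
and this is exactly where $\alpha<\tfrac12$ enters: the exponent $(1-2\alpha)/\alpha$ is then positive, so this quantity tends to $0$ as $s\nearrow t_0$ (it blows up when $\alpha>\tfrac12$ and tends to a nonzero constant when $\alpha=\tfrac12$, which is why $\tfrac12$ is the threshold). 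Consequently $\ddt v_i$ extends continuously to $t_0$, so $\vv$ is absolutely continuous across $t_0$ and $\vx\in C^1$; moreover the terms removed in \eqref{pieccs}, namely those with $j$ stuck to $i$, vanish in the limit, and are in fact identically zero once one shows that stuck particles stay stuck — which follows from a Gronwall argument exploiting the favourable sign of $-\tfrac{2}{N}\psi(|x_i-x_j|)|v_i-v_j|^2$ in $\ddt|v_i-v_j|^2$. Hence $(\vx,\vv)$ solves \eqref{cs} a.e.\ with absolutely continuous velocity, which gives existence. For uniqueness, any classical solution with absolutely continuous velocity has, by the same staying-stuck fact, at most $N$ sticking times, solves \eqref{cs} classically between them, and its stuck-partner terms contribute nothing; it thus satisfies Definition~\ref{piecdef}, so Theorem~\ref{piecthm} identifies it with $(\vx,\vv)$.

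The hard part will be the lower bound in \eqref{clusterbd}, i.e.\ ruling out that the cluster $A$ collapses faster than $(t_0-s)^{1/\alpha}$: for a single pair it is the explicit ODE analysis, but when several particles stick at the same instant, possibly with a whole hierarchy of sub-collisions, one must propagate the estimate from the innermost pairs outward while checking that the bounded influence of the outer particles does not spoil the balance $|v_i-v_j|\sim|x_i-x_j|^{1-\alpha}$ which pins down the collapse rate. Once \eqref{clusterbd} is available, the remainder is bookkeeping and the restriction $\alpha<\tfrac12$ plays no role beyond fixing the sign of $(1-2\alpha)/\alpha$.
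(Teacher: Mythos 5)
The crux of your argument is the two-sided rate estimate for a collapsing cluster, namely the lower bound $|x_i(s)-x_j(s)|\ge c\,(t_0-s)^{1/\alpha}$ together with $|v_i(s)-v_j(s)|\le C\,(t_0-s)^{(1-\alpha)/\alpha}$, and this is precisely the step you do not prove; it is a genuine gap, not bookkeeping. The exact rates come from the explicit two-particle phase-plane identity $\dot x+\Psi(x)=\mathrm{const}$, but inside a multi-particle cluster the perturbation of a given pair by the other cluster members is itself singular, not bounded, so the pair analysis does not transfer by ``absorbing a bounded influence'', and the proposed induction on ``nested sub-collisions'' is not available: when three or more particles meet at the same point at the same instant there is no innermost pair and no hierarchy to induct on. Even in a genuinely two-body situation with a merely bounded external force $F$, the perturbed conservation law only yields $\dot x+\Psi(x)=O(t_0-s)$, and for $\alpha<\tfrac12$ the term $O(t_0-s)$ \emph{dominates} $\Psi(x)\sim (t_0-s)^{(1-\alpha)/\alpha}$ near the sticking time, so the balance $|v|\sim|x|^{1-\alpha}$ does not pin down the collapse rate: a collapse like $x\sim(t_0-s)^{\beta}$ with $\beta>1/\alpha$ is compatible with bounded (even vanishing) forcing. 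Hence the claimed lower bound needs a real proof that is not sketched. A secondary inaccuracy: the set $A$ of particles colliding with the $i$th one at $t_0$ in general contains pairs that collide \emph{without} sticking; for those $|v_i-v_j|$ does not vanish, your velocity bound fails, and $\tfrac{d}{dt}v_i$ does not extend continuously to $t_0$ (one only gets $|v_i-v_j|\psi(|x_i-x_j|)\sim(t_0-s)^{-\alpha}$, unbounded but integrable), so the conclusion must anyway be formulated as local integrability of $\dot{\vv}$ rather than continuity.

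Note also that this is not the route taken here for Theorem \ref{piecthm2} (following \cite{jps}): no collapse rates are identified at all. Instead one exploits that for $\alpha<\tfrac12$ the weight \eqref{psing} becomes square-integrable near $0$ and estimates the right-hand side of \eqref{cs2} via Young's inequality with exponent $2$, which doubles the exponent $\alpha$ and keeps the relevant quantities finite; absolute continuity of $\vv$ across collision and sticking times then follows without the delicate lower bound your argument hinges on. Your reduction of uniqueness of classical solutions to the uniqueness of piecewise weak solutions of Theorem \ref{piecthm}, via the ``stuck particles stay stuck'' Gronwall argument, is a reasonable outline (with some care needed at times when a third particle collides with an already stuck pair), but as it stands the existence half of your proposal rests on an unproven, and as stated doubtful, estimate.
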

The advantage obtained by assuming  $\alpha\in(0,\frac{1}{2})$ is that the communication weight becomes square-integrable. In the proof we estimate the right-hand side of \eqref{cs2} using Young's inequality with exponent $2$, which leads to doubling exponent $\alpha$.

Introduction of the piecewise-weak solutions and the whole approach to the quantitative analysis of the CS model with singularity $\alpha\in(0,1)$ is based on the problems appearing at the times of sticking between particles. Therefore, a natural question arises if such phenomenon can even occur. In \cite{jpe} a detailed analysis of the two-particle in $\R^d$ case was performed which we sketch below.

Assuming without a loss of generality that $x_1 + x_2\equiv 0$ and $v_1+v_2\equiv 0$ (see \eqref{mean0}) we end up with two particles that move either on two parallel lines or on the same line. We omit the first possibility since it naturally leads to no collisions and focus on the situation which is equivalent to two particles in $\R$. Assume that $x_2(0)>x_1(0)$ and to make the particles move in the direction of each other we are forced to assume $v_2(0)-v_1(0)< 0$. Denoting $x:=x_2-x_1>0$ we use \eqref{cs} to obtain
\begin{align}\label{stick}
\ddot{x} = -\dot{x}\psi(x)
\end{align}
in $[0,t_0]$, where $t_0$ is the first time of collision between the particles.
\begin{prop}[\cite{jpe}]
Let $\alpha\in(0,1)$ and let $x$ be the unique solution\footnote{Which exists by Theorem \ref{piecthm}.} of \eqref{stick} with the initial data $x(0)>0$ and $\dot{x}(0)< 0$. Then the following are equivalent:
\begin{enumerate}
\item There exists a time $0<t_0<\infty$ such that $x(t_0)=\dot{x}(t_0) = 0$.
\item We have
\begin{align}\label{stickini}
\dot{x}(0)=-\Psi(x(0)),
\end{align}
where $\Psi(s) :=  \frac{1}{1-\alpha}s^{1-\alpha}$ is a primitive of $\psi$.
\end{enumerate} 
\end{prop}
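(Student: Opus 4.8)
The plan is to extract a conservation law hidden in \eqref{stick}. On the interval $[0,t_0]$ up to the first collision time the solution is classical and $x>0$, so $\psi(x)=\Psi'(x)$ and \eqref{stick} reads $\ddot x = -\Psi'(x)\dot x = -\tfrac{d}{dt}\Psi(x)$. Integrating once yields the invariant
\begin{align*}
\dot x(t)+\Psi(x(t))\equiv C:=\dot x(0)+\Psi(x(0)),\qquad t\in[0,t_0],
\end{align*}
and condition \eqref{stickini} says precisely that $C=0$. Thus the whole proposition reduces to the statement: the trajectory reaches the state $(0,0)$ in finite time if and only if $C=0$. I would then split the argument according to the sign of $C$.

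For $C=0$ I would substitute into the invariant to obtain the scalar separable equation $\dot x=-\Psi(x)=-\tfrac{1}{1-\alpha}x^{1-\alpha}$ on $[0,t_0)$; a direct integration (e.g.\ for $u=x^{\alpha}$) shows $x(t)^{\alpha}=x(0)^{\alpha}-\tfrac{\alpha}{1-\alpha}t$, so $x$ is strictly decreasing and vanishes at the finite time $t_0=\tfrac{1-\alpha}{\alpha}x(0)^{\alpha}$, with $\dot x(t_0)=-\Psi(0)=0$. This gives $(2)\Rightarrow(1)$, and incidentally shows $t_0$ is genuinely the first collision time.

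For the converse, suppose $C\neq0$. If $C>0$, then $\dot x(0)<0$ forces $C<\Psi(x(0))$, so $x^{\ast}:=\Psi^{-1}(C)\in(0,x(0))$; from $\dot x=C-\Psi(x)$ and monotonicity of $\Psi$ the solution $x$ decreases monotonically towards $x^{\ast}$, and linearising $\Psi$ near $x^{\ast}$ together with a one-sided Gronwall bound shows $x(t)-x^{\ast}$ decays at most exponentially, hence never vanishes in finite time — there is no collision at all, so $(1)$ fails. If $C<0$, then $\dot x=C-\Psi(x)\le C<0$ on $[0,t_0)$, so a collision does occur, at some $t_0\le x(0)/|C|$, but the invariant forces $\dot x(t_0)=C\neq0$, so the collision is not a sticking; by Definition \ref{piecdef} the dynamics is not altered there, the particles cross, and on the continuation (where $x<0$, so $\psi(|x|)=\Psi'(|x|)$) the analogous computation gives the invariant $\dot x-\Psi(|x|)\equiv C$, which drives $x$ monotonically towards the nonzero equilibrium separation $x=-\Psi^{-1}(|C|)$, reached only asymptotically. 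Hence $(0,0)$ is never attained and $(1)$ fails, completing $\neg(2)\Rightarrow\neg(1)$.

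The main obstacle I anticipate is the bookkeeping in the case $C<0$: one must follow the (unique, by Theorem \ref{piecthm}) piecewise-weak solution through a non-sticking collision and rule out any later return to $(0,0)$. The ingredients needed here — monotonicity of $|x|$ after the crossing and the fact that the limiting gap is approached only in infinite time — are elementary consequences of the same conservation law plus a comparison/Gronwall estimate, so this is a matter of care rather than of depth; the conservation law itself, which does all the real work, is immediate.
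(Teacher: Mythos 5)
Your argument is correct and rests on exactly the observation the paper highlights: integrating \eqref{stick} gives the invariant $\dot x+\Psi(x)\equiv\dot x(0)+\Psi(x(0))$, and sticking occurs in finite time precisely on the zero-level trajectory $\dot x=-\Psi(x)$. Your case analysis for $C>0$ (no collision, equilibrium at $\Psi^{-1}(C)>0$ reached only asymptotically) and $C<0$ (crossing with nonzero speed, then convergence to the gap $-\Psi^{-1}(|C|)$) fills in, along the same lines, the details the paper delegates to \cite{jpe}.
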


The proof can be found in \cite{jpe}. Here we shall only make the observation that integrating \eqref{stick} in the time interval $[0,t]$ leads to
\begin{align*}
\dot{x}(t) + \Psi(x(t)) = \dot{x}(0) + \Psi(x(0)) \stackrel{\eqref{stickini}}{=} 0.
\end{align*}
It means that the initial condition \eqref{stickini} places the solution on the trajectory described by $\dot{x} = -\Psi(x)$. For such trajectory if $x=0$ then $\Psi(x)=0$ and thus $\dot{x}=0$, which implies that the particles stick together whenever they collide. Then the proof of the proposition revolves around showing that this is the only trajectory leading to a collision and that the collision happens in a finite time.

\subsection{Asymptotics}

The asymptotics of the singular CS model is mostly the same as the asymptotics of the regular one, since it is related to the integrability of $\psi$ away from zero. This case was thoroughly studied on particle, kinetic and hydrodynamic levels by Ha and Liu in \cite{haliu}, Carrillo {\it et. al.} in \cite{car}, Ha and Tadmor in \cite{hatad} and others. We also recommend the survey \cite{B-D-T1}, where the regular CS model was discussed. Although the asymptotic behaviour of solutions for singular kernels is not significantly different, we discuss it here for the sake of completeness using the results from \cite{haliu} as an example. 
Since the integrability of $\psi(s)$ at $s=\infty$ is a major factor, we again distinguish based on $\alpha$.
\begin{prop}[Unconditional flocking, \cite{haliu}]\label{asymppart}
Let $\alpha\in(0,1]$ and let $(x,v)$ be a solution to \eqref{cs} with $|\vx_0|\neq 0$. Then there exist positive constants $x_m$ and $x_M$ such that
\begin{align*}
x_m\leq \sum_{i,j=1}^N(x_i-x_j)^2\leq x_M,\qquad \|\vv\|:=\sum_{i,j=1}^N(v_i-v_j)^2 \leq \|\vv_0\|e^{-\psi(x_M)t}.
\end{align*}
\end{prop}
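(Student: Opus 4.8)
The plan is to run the classical Cucker–Smale Lyapunov argument with the two scalar quantities $X(t):=\sum_{i,j}(x_i-x_j)^2$ and $V(t):=\sum_{i,j}(v_i-v_j)^2$, and to show that a suitable combination of them is monotone, forcing $X$ to stay in a bounded interval bounded away from $0$. First I would record the two basic differential inequalities. From Proposition \ref{dis} (applied after the reduction \eqref{mean0}, which gives $V=4N E_k$) we have $\frac{d}{dt}V = -\frac{4}{N}\sum_{i,j}(v_i-v_j)^2\psi(|x_i-x_j|)$; since $\psi$ is non-increasing and $|x_i-x_j|\le \sqrt{X}$ for every pair, this yields the dissipation bound $\frac{d}{dt}V \le -\psi(\sqrt{X})\,V$ (up to the harmless constant normalization). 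For the positions, differentiating $X$ and using \eqref{cs1} together with Cauchy–Schwarz gives $\left|\frac{d}{dt}X\right| \le C\sqrt{X}\,\sqrt{V}$, hence $\left|\frac{d}{dt}\sqrt{X}\right|\le C\sqrt{V}$.

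The heart of the argument is the standard ``flocking estimate'': I would show that along the flow the functional $\mathcal{E}(t):=\sqrt{V(t)} + \int_{\sqrt{X(t)}}^{\cdot}\psi(r)\,dr$ (a Lyapunov-type quantity pairing $\sqrt{V}$ with a primitive of $\psi$ evaluated at $\sqrt{X}$) is non-increasing in $t$. Indeed $\frac{d}{dt}\sqrt{V} \le -\tfrac12\psi(\sqrt{X})\sqrt{V}$ from the dissipation inequality, while $\frac{d}{dt}\int^{\sqrt{X}}\psi = \psi(\sqrt{X})\frac{d}{dt}\sqrt{X} \le \psi(\sqrt{X})\cdot C\sqrt{V}$, and after fixing the constant in the definition these two contributions cancel in sign. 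Monotonicity of $\mathcal{E}$ then gives $\int_{\sqrt{X(0)}}^{\sqrt{X(t)}}\psi(r)\,dr \le \sqrt{V(0)}$ for all $t$. Because $\alpha\in(0,1]$, the primitive $\Psi(s)=\int^s\psi$ behaves like $s^{1-\alpha}$ (or $\ln s$ when $\alpha=1$), so it is \emph{unbounded at $+\infty$}: this is precisely where the restriction $\alpha\le 1$ enters, and it is what makes the flocking unconditional. Consequently $\sqrt{X(t)}$ cannot escape to infinity — there is $x_M$ with $X(t)\le x_M$ for all $t$ — and this bound depends only on $X(0)$ and $V(0)$.

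With the upper bound $X(t)\le x_M$ in hand, the rest is quick. Since $\psi$ is non-increasing, $\psi(\sqrt{X(t)})\ge \psi(\sqrt{x_M})$, so plugging into the dissipation inequality $\frac{d}{dt}V\le -\psi(\sqrt{X})V$ and applying Gronwall gives the exponential decay $V(t)\le V(0)\,e^{-\psi(\sqrt{x_M})t}$, which (adjusting the constant inside $\psi$ to match the statement's $\psi(x_M)$) is the claimed alignment rate. Finally, the lower bound $X(t)\ge x_m>0$: from $\left|\frac{d}{dt}\sqrt{X}\right|\le C\sqrt{V}\le C\sqrt{V(0)}\,e^{-\psi(\sqrt{x_M})t/2}$, the total variation of $\sqrt{X}$ over $[0,\infty)$ is finite and controlled by $\tfrac{2C\sqrt{V(0)}}{\psi(\sqrt{x_M})}$; since $\sqrt{X(0)}=|\vx_0|\neq 0$ is strictly positive, $\sqrt{X(t)}$ stays above the positive constant $\sqrt{X(0)} - \tfrac{2C\sqrt{V(0)}}{\psi(\sqrt{x_M})}$ if that is positive, and in any case $\sqrt{X(t)}\to \sqrt{X_\infty}$ for some limit; one then argues $X_\infty>0$ because otherwise $\psi(\sqrt{X(t)})\to\infty$ would force $V$ to vanish too fast against the already-established decay — or more simply, one absorbs this into the Lyapunov estimate by noting $\Psi(\sqrt{X(t)})\ge \Psi(\sqrt{X(0)}) - \sqrt{V(0)}$, which is a positive lower bound on $\Psi(\sqrt{X})$ when $V(0)$ is small, and for general $V(0)$ one uses the monotone-after-finite-time behavior of $\sqrt{X}$.

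The main obstacle I anticipate is making the lower bound $x_m>0$ fully rigorous without a smallness assumption on the data: the clean Lyapunov cancellation gives the \emph{upper} bound for free, but the lower bound genuinely requires the decay of $\sqrt{V}$ to be fast enough that $\sqrt{X}$ has finite total variation, and then a short argument (as in \cite{haliu}) ruling out $\sqrt{X(t)}\to 0$. A secondary technical point is that the singularity of $\psi$ at $0$ plays no role here — all the estimates only use monotonicity and the growth of $\Psi$ at infinity — so the proof is insensitive to whether we are in the weakly or strongly singular regime, and in the strongly singular case $\alpha\ge 1$ collisions are anyway excluded by Theorem \ref{thm1} so $\psi(|x_i-x_j|)$ stays finite along the flow.
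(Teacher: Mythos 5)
The paper itself does not prove this proposition (it is quoted from \cite{haliu}), so the relevant comparison is with the standard Ha--Liu argument, which is precisely the Lyapunov/SDDI scheme you follow. Your upper bound and decay estimate are sound: with $X=\sum_{i,j}|x_i-x_j|^2$, $V=\sum_{i,j}|v_i-v_j|^2$, the inequalities $\frac{d}{dt}\sqrt V\le -c\,\psi(\sqrt X)\sqrt V$ and $\bigl|\frac{d}{dt}\sqrt X\bigr|\le C\sqrt V$, the monotonicity of $\sqrt V+c'\Psi(\sqrt X)$ with constants matched, and the divergence of the primitive $\Psi$ at infinity for $\alpha\le 1$ give $X(t)\le x_M$; Gronwall then gives the exponential alignment. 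Up to bookkeeping of constants this is the cited proof.

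The genuine gap is the lower bound $x_m>0$, and none of the three arguments you sketch closes it. The finite-total-variation argument only works under the smallness condition $2C\sqrt{V(0)}/\psi(\sqrt{x_M})<\sqrt{X(0)}$; the claim that $\psi(\sqrt{X(t)})\to\infty$ ``would force $V$ to vanish too fast'' contradicts nothing you have established; and $\sqrt X$ has no reason to become eventually monotone. The mechanism that actually produces the lower bound is the second Lyapunov functional $\sqrt V-c'\Psi(\sqrt X)$, which is non-increasing by the same cancellation (using $-\frac{d}{dt}\sqrt X\le|\frac{d}{dt}\sqrt X|\le C\sqrt V$) and yields $\Psi(\sqrt{X(t)})\ge\Psi(\sqrt{X(0)})-c\sqrt{V(0)}$ for all $t$. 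This is an unconditional positive bound exactly when $\Psi$ is unbounded below at $0^+$, i.e. when $\psi$ is non-integrable at the origin, which for the kernel \eqref{psing} means $\alpha=1$ within the range of the proposition; for $\alpha\in(0,1)$ one has $\Psi(0^+)=0$ and the bound is positive only if $\Psi(\sqrt{X(0)})>c\sqrt{V(0)}$. That restriction is not an artifact of the method: the paper's own two-particle sticking data \eqref{stickini}, with $\dot x(0)=-\Psi(x(0))$, drive the inter-particle distance to zero in finite time, so no soft argument of the kind you propose can deliver a uniform $x_m>0$ for arbitrary weakly singular data — the lower-bound part of your proof needs the minus-sign functional and, for $\alpha<1$, an explicit condition of this type (or a restriction to $\alpha=1$).
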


\begin{prop}[Conditional flocking, \cite{haliu}]\label{asympart2}
Let $\alpha>1$ and let $(\vx,\vv)$ be a solution to \eqref{cs} with $|\vx_0|\neq 0$. Suppose the initial configuration $(\vx_0,\vv_0)$ satisfies
\begin{align*}
\left(\sqrt{\sum\nolimits_{i,j=1}^N(x_{0i}-x_{0j})^2}\right)^{1-\alpha}\geq (\alpha-1)\sqrt{\sum\nolimits_{i,j=1}^N(v_{0i}-v_{0j})^2}.
\end{align*}
Then there exist positive constants $x_m$ and $x_M$ such that
\begin{align*}
x_m\leq \sum\nolimits_{i,j=1}^N(x_i-x_j)^2\leq x_M,\qquad \|\vv\|:=\sum\nolimits_{i,j=1}^N(v_i-v_j)^2 \leq \|\vv_0\|e^{-\psi(x_M)t}.
\end{align*}
\end{prop}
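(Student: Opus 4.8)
The plan is to run the standard Cucker--Smale Lyapunov-functional argument on the $\ell^2$ fluctuations. Put $X(t):=\sum_{i,j=1}^N(x_i-x_j)^2$ and $V(t):=\sum_{i,j=1}^N(v_i-v_j)^2=\|\vv\|$. First I would record two differential inequalities. Differentiating $X$ gives $\ddt X=2\sum_{i,j}(x_i-x_j)\cdot(v_i-v_j)$, so by Cauchy--Schwarz over the $N^2$ index pairs $|\ddt X|\le 2\sqrt X\,\sqrt V$, hence $|\ddt\sqrt X|\le\sqrt V$. For the velocity, Proposition \ref{dis} (i.e. \eqref{dis0}) together with the elementary identity $\tfrac12\sum_i v_i^2=\tfrac{1}{4N}V$ yields $\ddt V=-2\sum_{i,j}(v_i-v_j)^2\psi(|x_i-x_j|)$; since $|x_i-x_j|\le\sqrt X$ and $\psi$ is non-increasing, $\psi(|x_i-x_j|)\ge\psi(\sqrt X)$, so $\ddt V\le-2\psi(\sqrt X)\,V$ and therefore $\ddt\sqrt V\le-\psi(\sqrt X)\,\sqrt V$ wherever $V>0$ (the instant $V=0$ means the ensemble is already aligned and then $\vv\equiv0$, which is harmless).

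Next, with $\Psi(s)=\frac{1}{1-\alpha}s^{1-\alpha}$ — the primitive of $\psi$ already used above — note that for $\alpha>1$ the function $\Psi$ is a strictly increasing bijection of $(0,\infty)$ onto $(-\infty,0)$ with $\Psi'=\psi$. Combining the two inequalities, $\ddt\bigl(\sqrt V+\Psi(\sqrt X)\bigr)=\ddt\sqrt V+\psi(\sqrt X)\,\ddt\sqrt X\le\psi(\sqrt X)\bigl(\ddt\sqrt X-\sqrt V\bigr)\le0$, so $t\mapsto\sqrt V+\Psi(\sqrt X)$ is non-increasing; symmetrically, using $\ddt\sqrt X\ge-\sqrt V$, the map $t\mapsto\Psi(\sqrt X)-\sqrt V$ is non-decreasing. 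The hypothesis $(\sqrt{X(0)})^{1-\alpha}\ge(\alpha-1)\sqrt{V(0)}$ is exactly the statement $\sqrt{V(0)}+\Psi(\sqrt{X(0)})\le0$.

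From the first monotone quantity, $\Psi(\sqrt{X(t)})\le\sqrt{V(t)}+\Psi(\sqrt{X(t)})\le\sqrt{V(0)}+\Psi(\sqrt{X(0)})\le0$; when the hypothesis is strict this gives $\Psi(\sqrt{X(t)})\le c<0$, and inverting $\Psi$ produces an upper bound $X(t)\le x_M<\infty$. From the second monotone quantity, $\Psi(\sqrt{X(t)})\ge\Psi(\sqrt{X(0)})-\sqrt{V(0)}$, a finite negative number because $|\vx_0|\ne0$ together with the normalization \eqref{mean0} forces $X(0)>0$; inverting again gives $X(t)\ge x_m>0$. Finally, since $\sqrt{X(t)}$ is now bounded above, monotonicity of $\psi$ turns $\ddt\sqrt V\le-\psi(\sqrt X)\sqrt V$ into $\ddt\sqrt V\le-c_\ast\sqrt V$ with a fixed $c_\ast=c_\ast(x_M,\alpha)>0$, and Grönwall's lemma delivers the exponential alignment $\|\vv(t)\|\le\|\vv_0\|e^{-c_\ast t}$, i.e. the stated bound (up to the precise normalization absorbed in $x_M$ and the constant in the exponent).

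The two points where I expect to have to be careful are the following. First, the borderline case of equality $\sqrt{V(0)}+\Psi(\sqrt{X(0)})=0$ must be handled separately: there one argues that $\sqrt V+\Psi(\sqrt X)$ is in fact \emph{strictly} decreasing unless all velocities already coincide — equality in the Cauchy--Schwarz step would force a purely radial velocity field $v_i=\lambda x_i$, which is incompatible with \eqref{cs2} being maintained in time — so $X(t)$ stays bounded for $t>0$ and the conclusion persists. Second, the low-regularity bookkeeping: $\sqrt X$ and $\sqrt V$ are only Lipschitz, so the differential inequalities should be read in the a.e./Dini sense and complemented by a short continuity argument ensuring $X(t)$ never reaches $0$; one should also first invoke the existence of a smooth (hence, by Theorem \ref{thm1}, non-collisional for $\alpha>1$) solution on which these computations make sense. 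Everything else is a routine chase of the two monotone functionals.
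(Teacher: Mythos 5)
The survey itself does not prove this proposition --- it is quoted from \cite{haliu} --- and your argument is exactly the standard proof from that reference: the system of dissipative differential inequalities $|\frac{d}{dt}\sqrt X|\le\sqrt V$, $\frac{d}{dt}\sqrt V\le-\psi(\sqrt X)\sqrt V$, together with the two monotone functionals $\sqrt V\pm$ (a primitive of $\psi$ evaluated at $\sqrt X$). On the main line (strict inequality in the hypothesis) your chain of estimates is correct, including the use of \eqref{mean0} and \eqref{Ek} to get $\frac{d}{dt}V=-2\sum_{i,j}(v_i-v_j)^2\psi(|x_i-x_j|)$, the lower bound $x_m$ from the second functional, and the Gr\"onwall step once $X\le x_M$; the discrepancy between your rate and the stated $e^{-\psi(x_M)t}$ is only the normalization looseness already present in the proposition.

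The one genuine weak spot is your treatment of the borderline case $\sqrt{V(0)}+\Psi(\sqrt{X(0)})=0$. The claim that equality in the Cauchy--Schwarz step ``would force $v_i=\lambda x_i$, which is incompatible with \eqref{cs2} being maintained in time'' is false: for $N=2$ in one dimension (or two particles moving along the line joining them) Cauchy--Schwarz is an identity for all times, and the relation $v_i=\lambda(t)x_i$ persists under the dynamics. The correct source of strictness is the other inequality: since $X=\sum_{k,l}|x_k-x_l|^2$ counts each ordered pair, every pair satisfies $|x_i-x_j|^2\le X/2$, so whenever $V>0$ (and $X>0$, guaranteed by collision avoidance for $\alpha\ge1$) one has $\frac{d}{dt}V\le-2\psi\bigl(\sqrt{X/2}\bigr)V<-2\psi(\sqrt X)V$, hence $\frac{d}{dt}\bigl(\sqrt V+\Psi(\sqrt X)\bigr)\le-\bigl[\psi(\sqrt{X/2})-\psi(\sqrt X)\bigr]\sqrt V<0$. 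Thus either $V$ vanishes at some time (after which the solution is stationary and $X$ is trivially bounded), or the functional drops strictly below zero immediately and your strict-case argument applies from any $t_1>0$ onward, with $X$ bounded on $[0,t_1]$ by continuity. Alternatively you may simply note that the original result in \cite{haliu} is formulated with a strict threshold, so the equality case is an artifact of the survey's paraphrase; either way, replace the ``radial field'' argument, which does not work.
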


\begin{rem}\rm
The class of admissible solutions required by Propositions \ref{asymppart} and \ref{asympart2} includes the classes provided by Theorems \ref{piecthm}  and \ref{piecthm2} for $\alpha<1$ and by Theorem \ref{thm1} for $\alpha\geq 1$.
Observe that singularity with $\alpha=1$ is the only value that satisfies the assumptions of Theorem \ref{thm1} and Proposition \ref{asymppart} and thus, leads to, both, unconditional flocking and collision-avoidance. Of course, as explained below equation \eqref{psing}, the choice of the communication weight of the form \eqref{psing} is quite arbitrary and, in practice, the only requirement for the lack of collisions is the nonintegrability of $\psi$ near $0$. On the other hand for the unconditional flocking nonintegrability of $\psi$ at the infinity is required.
\end{rem}
\subsection{Variants of the model}

From the perspective of applications, it is often useful to modify the Cucker-Smale model to adapt it to particular phenomena. We recall the wide range of modifications, presented in the introduction, from models with leaders \cite{shen, cuc3, C-D} and preferences \cite{cuc3, Li}, models with time-delay \cite{has}, up to models with various additional external or internal forces (deterministic and stochastic) \cite{hahaki, bocan, park, car3}.
We also refer to the survey in \cite{B-D-T1}. However, in the case of the CS model with a singular communication weight the well-posedness theory is relatively fresh and thus not many additional directions were pursued as of yet. Moreover, as presented in previous sections, the dynamics of the singular CS model either admits sticking of the trajectories of the particles or does not allow any collisions at all. In the first case, admittedly, not many perspectives of applications were discovered and in the second case, the dynamics is essentially equivalent to the regular CS model with an added bonus of initial-data-independent collision-avoidance. In particular, singular CS model with $\alpha\geq 1$ seems to be viable for most modifications that the regular CS model underwent with some additional mathematical challenge. That being said, in the remainder of this section we present two results directly involved with the singular CS model.

{\bf Bonding force.} The first variant comes from paper \cite{K-P} and it deals with the CS model with a bonding force. The bonding force was introduced for the regular CS model by Park {\it et. al.} in \cite{park}. The system reads

\begin{subnumcases}{\label{B-2}}
\frac{\dd x_i}{\dt} = v_i,\quad x_i,v_i\in\R^d,\quad i=1,2,\cdots,N,\quad t>0\label{B-2a}\\
\begin{aligned}
\frac{\dd v_i}{\dt} =& \frac{K_1}{N}\sum_{j=1}^N\psi(|x_j-x_i|) (v_j-v_i)
+\frac{\tilde{K}}{N}\sum_{j=1}^N \frac{(v_i-v_j) \cdot (x_i-x_j)}{2|x_i-x_j|^2}(x_j-x_i)\\
&+\frac{K_2}{N} \sum_{j=1}^N\frac{|x_j-x_i|-2R}{2|x_j-x_i|}(x_j-x_i),
\end{aligned}\label{B-2b}
\end{subnumcases}
where the middle and last terms on the right-hand side of \eqref{B-2b} compose the bonding force. Here constants $K_1$, $K_2$ and $\tilde{K}$ control the intensity of the interaction and constant $R$ influences the asymptotic distance between the particles.

The purpose of the bonding force in \eqref{B-2} is to impose a tendency for the particles to stay at distance $2R$ from each other.  However, for dimension related reasons such pattern is impossible for $N>d+1$. Instead, the numerical simulations performed in \cite{park} indicate that the particles converge to one of many configurations that possess the following properties: they are symmetric (see Figures \ref{fig:1c} and \ref{fig:1d}), the particles are contained within a ball of radius $2R$ and the distances between the particles are bigger than a positive constant. It is noteworthy that the latter two properties, while observed in simulations, were not proven mathematically in \cite{park}.

\begin{figure}[!tbp]
  \centering
  \subfloat[$N=20$]{\includegraphics[width=0.4\textwidth]{./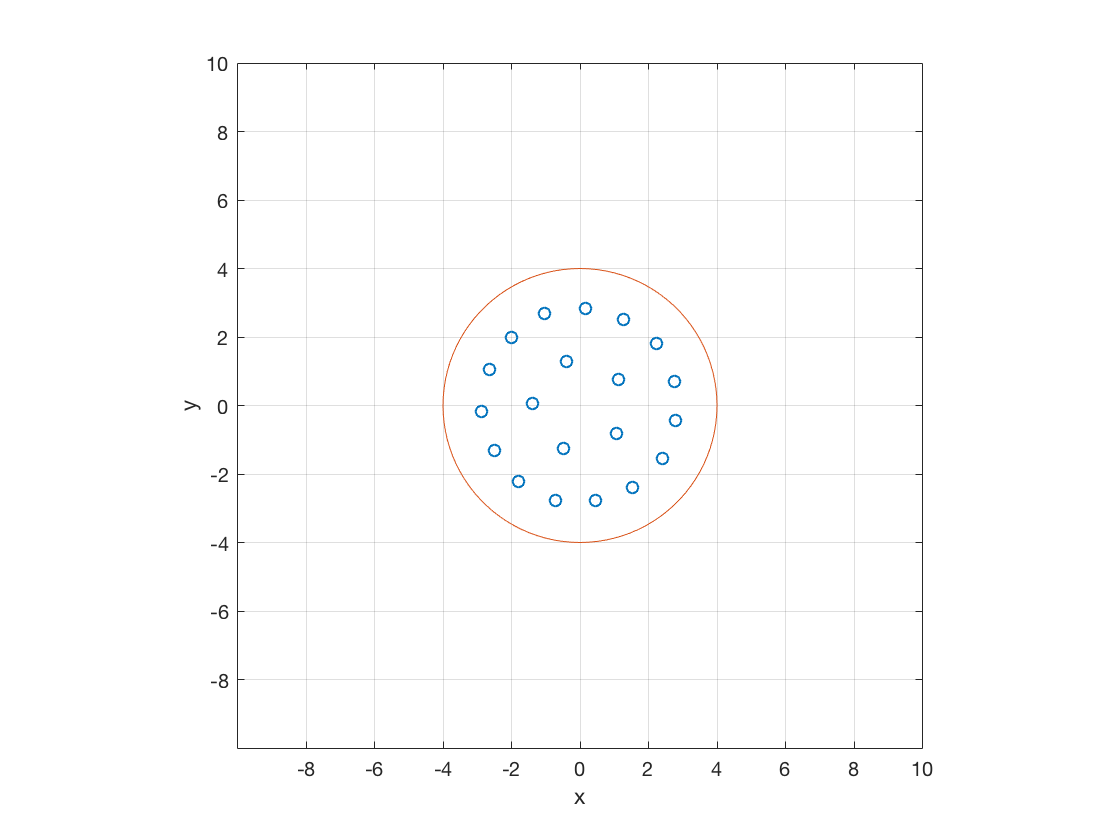}\label{fig:1c}}
  \subfloat[$N=25$]{\includegraphics[width=0.4\textwidth]{./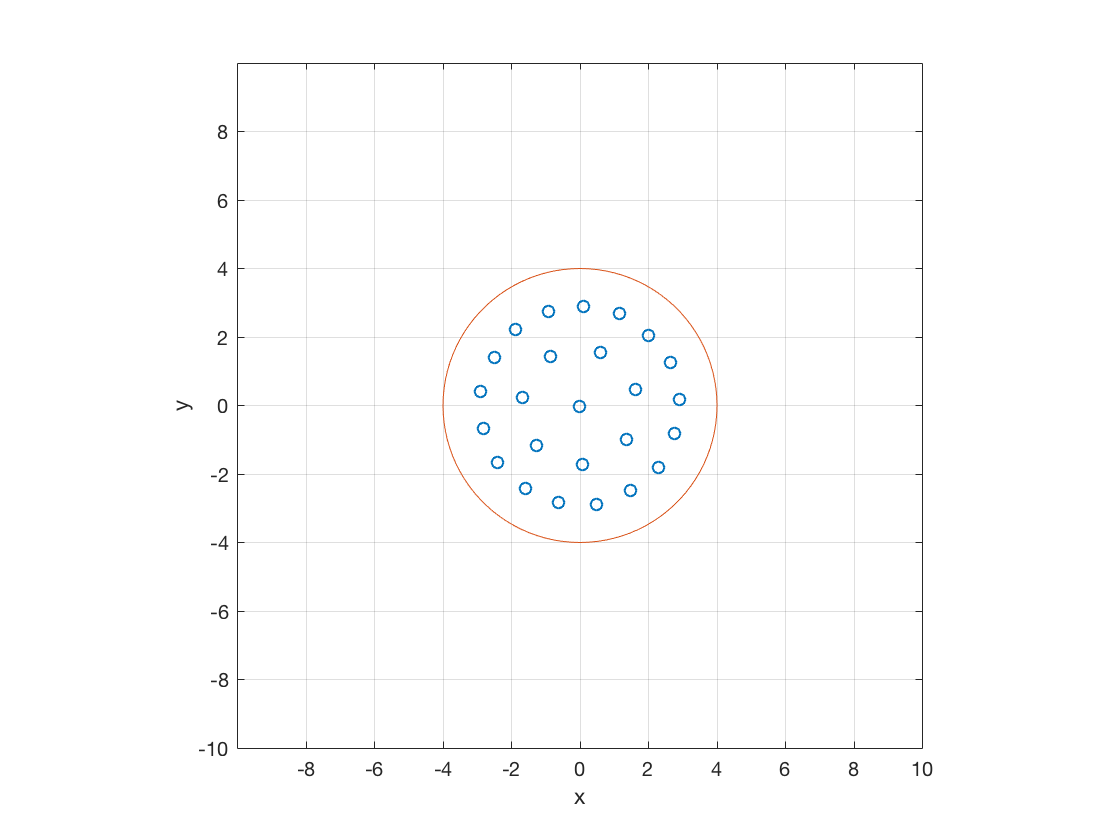}\label{fig:1d}}
  \caption{The particles governed by system \eqref{B-2} coverage to a symmetric pattern that depends not only on the number of the particles (as seen on the pictures) but also on the initial configuration. Blue circles represent the particles, while the large red circle represents the $2R$ radius around the origin.}
\end{figure}

The contribution of \cite{K-P} is two-fold. 

First, through the implementation of a singular weight, collision-avoidance is shown. Second, it is proven that asymptotically, the particles converge to a state at which they are contained in a ball of radius $2R$, and the distances between them are positive, thus the gap between the numerical observations and theoretical knowledge from \cite{park} is bridged. We note that, furthermore, the lack of local and asymptotic collisions leads to a global-in-time minimal distance between the particles. These contributions are summarized  in the following theorem.

\begin{theo}[\cite{K-P}]\label{k-p}
Consider system \eqref{B-2} in two frameworks:
\begin{description}
\item[${\mathcal F}_1$] with regular communication weight, e.g. $\psi(s)=(1+s)^{-\alpha}$,
\item[${\mathcal F}_2$] with singular communication weight \eqref{psing} with $\alpha\geq 1$ and non-collisional initial data. 
\end{description}
Then both systems in both frameworks admit unique smooth solutions such that
\begin{enumerate}[(i)]
\item the kinetic energy $E_k:=\frac{1}{2}\sum_{i=1}^N|v_i|^2$ converges asymptotically to zero,
\item there exists $\rho>0$ such that
\begin{align*}
\liminf_{t\to\infty}\min_{i,j=1,...,N}|x_i(t)-x_j(t)|\geq \rho,
\end{align*}
\item we have
\begin{align*}
\limsup_{t\to\infty}\max_{i,j=1,...,N}|x_i(t)-x_j(t)|\leq 2R.
\end{align*}
\end{enumerate}
Moreover in framework ${\mathcal F}_2$ point $(ii)$ can be replaced with
\begin{enumerate}[(i)]
\setcounter{enumi}{1}
\item\hskip-.2 em $^{'}$  there exists $\rho>0$ such that
\begin{align*}
\inf_{t\geq 0}\min_{i,j=1,...,N}|x_i(t)-x_j(t)|\geq \rho.
\end{align*}
\end{enumerate}
\end{theo}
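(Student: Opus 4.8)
The plan is to base the whole argument on a single dissipated energy, to establish collision avoidance separately in the two frameworks, and to read off $(i)$--$(iii)$ from a LaSalle--Barbalat analysis of the limiting dynamics. Since every force in \eqref{B-2} involves only the differences $x_i-x_j$ and $v_i-v_j$, the mean velocity $\tfrac1N\sum_i v_i$ is conserved and the mean position moves linearly, so after subtracting them I may assume \eqref{mean0}; in this frame alignment is the same as $E_k\to0$. On the open set where no two particles coincide the right-hand side of \eqref{B-2b} is locally Lipschitz, and an elementary computation shows that both bonding terms stay \emph{uniformly bounded} on configurations of bounded diameter, even as $|x_i-x_j|\to0$. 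I then introduce the modified energy
\[
\mathcal{E}(t):=\frac12\sum_{i=1}^N|v_i|^2+\frac{K_2}{8N}\sum_{i,j=1}^N\big(|x_i-x_j|-2R\big)^2,
\]
and, by the index-swap symmetrisation used in the proof of Proposition \ref{dis}, verify
\eqh{
\frac{d}{dt}\mathcal{E}
&=-\frac{K_1}{2N}\sum_{i,j=1}^N\psi(|x_i-x_j|)\,|v_i-v_j|^2\\
&\quad-\frac{\tilde{K}}{4N}\sum_{i,j=1}^N\frac{\big((v_i-v_j)\cdot(x_i-x_j)\big)^2}{|x_i-x_j|^2}\ \le\ 0
}
on every collision-free piece of solution. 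This immediately yields the a priori bounds $|v_i(t)|\le\sqrt{2\mathcal E(0)}$ and $\max_{i,j}|x_i(t)-x_j(t)|\le D_{\max}$ with an explicit $D_{\max}$, and hence, via the local theory and continuation, global existence once collisions are excluded.

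To exclude collisions I use different mechanisms in the two frameworks. In $\mathcal{F}_2$ I adapt the proof of Theorem \ref{thm1}: at a putative first collision time $t_0$ I split the particles into the group $A$ of those reaching the common position at $t_0$ and the complementary group $B$, which stays at positive distance on a neighbourhood of $t_0$; inside $A$ the singular kernel $\psi(s)=s^{-\alpha}$ with $\alpha\ge1$ dominates, while the interaction with $B$ and both (bounded) bonding terms act only as lower-order perturbations, so the Gronwall estimate followed by the blow-up at $0$ of the primitive $\Psi$ of $\psi$ contradicts $|x_i(t_0)-x_j(t_0)|=0$. In $\mathcal{F}_1$, where $\psi$ is regular, I exploit instead the short-range repulsion built into the last term of \eqref{B-2b}: the relative-coordinate equation of a colliding pair is a damped-oscillator differential inequality whose restoring term is bounded below by a positive constant near contact while all remaining terms are bounded, and a barrier argument on $|x_i-x_j|$ then prevents it from reaching $0$. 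In both cases the solution is global and smooth; in $\mathcal{F}_2$ it has, moreover, no collision for any $t\ge0$, which together with the asymptotic bound below upgrades to $(ii')$.

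For the asymptotics I return to the energy identity, which gives $\int_0^\infty\sum_{i,j}\psi(|x_i-x_j|)|v_i-v_j|^2\,dt<\infty$; since $|x_i-x_j|\le D_{\max}$ and $\psi$ is non-increasing, $\psi(|x_i-x_j|)\ge\psi(D_{\max})>0$, so $\int_0^\infty\sum_{i,j}|v_i-v_j|^2\,dt<\infty$, and as this integrand has bounded time-derivative (from the a priori bounds, the absence of collisions, and in $\mathcal F_2$ the distance lower bound), Barbalat's lemma yields $\sum_{i,j}|v_i-v_j|^2\to0$, i.e.\ $(i)$. A second application of Barbalat, now to $\dot v_i$ (bounded, uniformly continuous, and integrable in time), gives $\dot v_i\to0$, and passing to the limit in \eqref{B-2b} — the alignment and friction terms vanishing because $v_i-v_j\to0$ — shows that every configuration in the $\omega$-limit set obeys the equilibrium relation $\sum_j(|x_i-x_j|-2R)\frac{x_j-x_i}{|x_i-x_j|}=0$. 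Because collision avoidance is quantitative enough that no $\omega$-limit configuration has coinciding particles, compactness of the $\omega$-limit set gives $\liminf_{t\to\infty}\min_{i,j}|x_i-x_j|>0$, which is $(ii)$; and testing the equilibrium relation at a pair realising the diameter with the corresponding unit direction — using that the center of mass is at the origin and that the extreme particles see all the others on the inner side — forces the diameter of any $\omega$-limit equilibrium to be $\le2R$, which is $(iii)$.

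The main obstacles are twofold. The first is making the two collision-avoidance mechanisms genuinely coexist with the bonding force — in particular controlling the radial bonding term near a collision, which is bounded but not even continuous there — so that the Gronwall/$\Psi$-blow-up argument in $\mathcal F_2$ and the barrier argument in $\mathcal F_1$ survive the perturbation. The second is the sharp constant $2R$ in $(iii)$: it is not delivered by the energy estimate (which only bounds the diameter by $2R$ plus a data-dependent quantity) and requires the full $\omega$-limit/equilibrium analysis together with a careful geometric estimate, a crude version of which yields only a weaker bound.
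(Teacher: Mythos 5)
Your energy identity and your treatment of existence in $\mathcal F_2$ (splitting a putative collision group $A$ from the rest, with both bonding terms relegated to the bounded group $B$) are correct and coincide with the route of the paper and \cite{K-P}. The genuine problems are in the asymptotic part. First, the $\omega$-limit/equilibrium argument for (iii) cannot work: the equilibrium relation $\sum_j(|x_i-x_j|-2R)\frac{x_j-x_i}{|x_i-x_j|}=0$ admits configurations of diameter larger than $2R$. Concretely, three collinear particles at $-\tfrac{4R}{3},0,\tfrac{4R}{3}$ with zero velocities satisfy it (all three forces in \eqref{B-2b} vanish, so this is even a stationary solution of the full system) and have a pairwise distance $\tfrac{8R}{3}>2R$; more generally, $N$ equally spaced collinear particles with spacing $4R/N$ form an equilibrium of diameter $4R(N-1)/N\nearrow 4R$. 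Your test at a diameter-realising pair fails exactly because pairs at distance below $2R$ contribute with the favourable sign, and no sharpening of this static argument can force a $2R$ bound on pairwise distances. What \cite{K-P} actually establishes (see the discussion preceding the theorem: the particles are asymptotically contained in a ball of radius $2R$) is obtained there from the uniform-in-time regularity supplied by point (ii), not by classifying equilibria.

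Second, your derivation of (i) and (ii) is circular, and the heart of (ii) is assumed rather than proved. Barbalat's lemma for (i) needs uniformly bounded accelerations, which in $\mathcal F_2$ requires a lower bound on interparticle distances uniform on $[0,\infty)$ --- that is (ii$'$) --- yet you produce (ii)/(ii$'$) only afterwards, from an $\omega$-limit analysis that already uses (i) and $\dot v_i\to0$. Likewise, ``collision avoidance is quantitative enough that no $\omega$-limit configuration has coinciding particles'' is precisely the exclusion of asymptotic collisions; the Gronwall/$\Psi$ blow-up bound degenerates as $t\to\infty$ and does not give it. The paper's scheme runs in the opposite order: (i) is taken from \cite{park}, (ii) follows by showing that an asymptotic collision is an unstable event incompatible with $E_k\to0$, and only then come (ii$'$) and (iii). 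Finally, your collision-avoidance mechanism in $\mathcal F_1$ is untenable: near contact the bonding repulsion has magnitude about $K_2R/N$, i.e.\ it is bounded and derives from a potential that is finite at contact, so a pair approaching with sufficiently large relative speed does collide and no barrier argument can prevent it; neither the theorem nor \cite{K-P}/\cite{park} claim unconditional collision avoidance for the regular weight --- existence and (i) in $\mathcal F_1$ are simply quoted from \cite{park}.
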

The proof of existence and point $(i)$ in framework ${\mathcal F}_1$ can be found in \cite{park}. All other assertions are shown in \cite{K-P}. Here we briefly describe the reasoning in the case of framework ${\mathcal F}_2$.

Proof of existence follows from the collision-avoidance exactly like in Theorem \ref{thm1}. In the proof of Theorem \ref{thm1} collision-avoidance is shown by dividing the interactions between the particles into two groups: $A$ -- singular interactions, and $B$ -- bounded interactions. Here all the interactions originating from the bonding force are added to the group $B$ (since bonding force is not singular) and then the proof proceeds in the same way as in Theorem \ref{thm1}. 
For a  proof of $(i)$ we refer to  \cite{park}. As for point $(ii)$, a careful analysis reveals that an asymptotic collision between particles is an unstable event and makes convergence of the kinetic energy impossible (and thus, contradicts $(i)$). The proof of $(iii)$ follows thanks to the uniform-in-time regularity of the solution granted by point $(ii)$.

\begin{rem}\rm
In \cite{K-P} Theorem \ref{k-p} is proven in the case of a simplified version of system \eqref{B-2}, where the middle term (with $\tilde{K}$) in \eqref{B-2}$_2$ is removed. Such simplified system has the same asymptotics and collision-avoidance as \eqref{B-2}. Numerical simulations performed in \cite{K-P} revealed that without the middle term, the system converges to equilibrium at a slower rate, which suggests that this term is related to the mitigation of oscillatory behavior of the particles.
\end{rem}

{\bf Decentralized control.} Another modification of the singular CS model is through the addition of a decentralized control \cite{Bakule, J-L-M}. The idea is to assign a provisional order to the agents and make each agent (except for the first one) synchronize its position with the position of the previous one. The system reads
\begin{align}\label{cssparse}
\begin{aligned}
\frac{d x_i}{dt} &= v_i,\quad i=1,\dots, N, \quad t > 0,\cr
\frac{d v_i}{dt} &= \frac{K}{N}\sum_{j=1}^N \psi(|x_i-x_j|)(v_j - v_i) + u_i.
\end{aligned}
\end{align}
Given $z_i \in \R^d$ for $i=1,\cdots, N-1$, the control term $u := (u_1, \cdots, u_N)$ is given by
$$\begin{aligned}
u_1 &= -\phi(|x_1 - x_2 - z_1|^2)(x_1 - x_2 - z_1),\cr
u_N &= \phi(|x_{N-1} - x_N - z_{N-1}|^2)(x_{N-1} - x_N - z_{N-1}),\cr
u_i &= \phi(|x_{i-1} - x_i - z_{i-1}|^2)(x_{i-1} - x_i - z_{i-1}) - \phi(|x_i - x_{i+1} - z_i|^2)(x_i - x_{i+1} - z_i),
\end{aligned}$$
for $i \in \{2,\cdots, N-1\}$,
where $\phi$ is a smooth weight of the form $\phi(s)=(1+s)^{-\beta}$, $\beta>0$. Thus, through this control, in theory, $i$th particle adjusts its position in a way that minimizes $u_i$, which is by having $x_{i-1}-x_i$ converge to $z_{i-1}$. Therefore, by prescribing proper coordinates $z_i$, one can force $i$th particle to attain any position respective to $(i-1)$th particle.

Similarly to the model with a bonding force the main area of application is in the control of unmanned aerial vehicles. The advantage of the decentralized control is that it allows emergence of a variety of pattern formations through the manipulation of $z_i$. Moreover each agent is required to ``remember'' only its relative position to a single other agent. The disadvantage is that it requires input of $z_i$, while the bonding force achieves symmetric patterns depicted in Figures \ref{fig:1c} and \ref{fig:1d} automatically.

In \cite{ckpp} decentralized control was added to the singular CS model resulting in pattern formation with collision-avoidance presented in the following theorem.
\begin{theo}[\cite{ckpp}]\label{sparsethm}
(A) Consider system \eqref{cssparse} with $\alpha\geq 1$ and $\beta>0$ subjected to non-collisional initial data $(\vx_0, \vv_0)$ (see Definition \ref{basic}). Then there exists a global smooth, non-collisional solution.

(B) Moreover if $\alpha\geq 2$ and  one of the two following hypotheses holds:
\begin{itemize}
\item[(i)] $\beta \leq 1$; 
\item[(ii)] $\beta > 1$ and 
\begin{align*}
\sum_{i=2}^{N}\int_{|x_{0i-1} - x_{0i} - z_{i-1}|^2}^\infty \phi(r)\,\dd r > \frac{4}{MN}\sqrt{\sum_{i,j=1}^N|v_{0i} - v_{0j}|^2}.
\end{align*}
\end{itemize}
Then we have
\[
\sup_{0 \leq t \leq \infty}\max_{1 \leq i,j \leq N}|x_i(t)-x_j(t)| < \infty \quad \mbox{and} \quad \max_{1 \leq i,j \leq N}|v_i(t) - v_j(t)|\to 0 \quad \mbox{as} \quad t \to \infty.
\]

(C) Finally if
\begin{align}\label{noas}
\liminf_{t\to\infty}|x_i(t)-x_j(t)|>0
\end{align}
for all $i,j \in \{1,\dots,N\}$. Then there exists a limit $\lim_{t\to\infty}x(t)=:x^\infty$ satisfying
\begin{align*}
x_i^\infty=x_{i-1}^\infty-z_{i-1}\qquad\mbox{for all} \quad i=1,\dots,N.
\end{align*}
\end{theo}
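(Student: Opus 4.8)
The plan is to prove assertions (A), (B), (C) in that order, in each case reducing to machinery already developed in the chapter. For (A), the first step is to record that the control term is a harmless perturbation. Writing $\Phi(s):=\int_0^s\phi(r)\,\dd r\ge 0$ and $V(\vx):=\tfrac12\sum_{k=2}^{N}\Phi\big(|x_{k-1}-x_k-z_{k-1}|^2\big)$, a direct differentiation gives $u_i=-\nabla_{x_i}V$, and since $V$ depends only on the differences $x_{k-1}-x_k$ we also get $\sum_{i}u_i=0$. Hence the augmented energy $\mathcal E:=\tfrac12\sum_i|v_i|^2+V$ obeys, after the same index swap as in \eqref{trick},
\[
\ddt\mathcal E=-\frac{K}{2N}\sum_{i,j=1}^N|v_i-v_j|^2\psi(|x_i-x_j|)\le 0 ,
\]
and, as $V\ge 0$, this yields the uniform bound $|v_i(t)|\le M:=\sqrt{2\mathcal E(0)}$, the exact analogue of Proposition \ref{dis}. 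Local smooth existence away from collisions is immediate; to prolong past a hypothetical first collision one repeats the cluster argument behind Theorem \ref{thm1}: split the forces acting on the colliding cluster $A$ into the singular intra-cluster alignment part and a bounded remainder, and put \emph{every} control contribution into the remainder (legitimate because $\phi$ is regular and positions are bounded on finite intervals). Half of the intra-cluster singular force dominates the remainder, and the other half runs the Gronwall/primitive estimate of Theorem \ref{thm1} to exclude the collision, giving the global, non-collisional, smooth solution.

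For (B) there are two new things to do: confine the flock and then align it. For the confinement, if $\beta\le 1$ the primitive $\Phi$ is unbounded, so $V(t)\le\mathcal E(0)$ forces $\sup_t|x_{k-1}(t)-x_k(t)|<\infty$ for each $k$, and telescoping bounds $\max_{i,j}|x_i-x_j|$. If $\beta>1$ the potential $\Phi$ is bounded and confinement is not automatic; here I would construct a Lyapunov functional of conditional-flocking type — essentially $\sqrt{\sum_{i,j}|v_i-v_j|^2}$ balanced against $\sum_{k=2}^N\int_{|x_{k-1}-x_k-z_{k-1}|^2}^{\infty}\phi$ — whose monotonicity is guaranteed precisely by the threshold inequality in hypothesis (ii), and which keeps $\sum_k|x_{k-1}-x_k-z_{k-1}|^2$, hence the diameter $d_{\max}$, bounded. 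Once the diameter is bounded, $\psi(|x_i-x_j|)\ge\psi(d_{\max})=:c_0>0$, so the energy identity yields $\int_0^{\infty}\sum_{i,j}|v_i-v_j|^2\,\dt<\infty$; to upgrade this to $\max_{i,j}|v_i-v_j|\to 0$ I would apply Barbalat's lemma, which needs a uniform-in-time bound on $\ddt\sum_{i,j}|v_i-v_j|^2$, hence control of the alignment forces $\psi(|x_i-x_j|)(v_j-v_i)$ for all times — that is, a uniform-in-time positive lower bound on $\min_{i,j}|x_i-x_j|$, now against the bounded (possibly attracting) control force. This is where the strengthened range $\alpha\ge 2$ is essential: it enforces a strong enough near-collision repulsion to propagate such a bound, after which $\psi(|x_i-x_j|)$ is bounded above as well and the Barbalat argument closes. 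I expect the $\beta>1$ confinement and this uniform separation bound to be the only genuinely delicate steps; the rest is bookkeeping along the lines of Theorem \ref{thm1} and Proposition \ref{asympart2}.

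For (C), assume in addition \eqref{noas}, and work in the regime where the flock diameter stays bounded (automatic for $\beta\le 1$, and in particular under the hypotheses of (B)). Since the solution is non-collisional on every finite interval, \eqref{noas} upgrades to $\delta_0:=\inf_{t\ge 0}\min_{i,j}|x_i(t)-x_j(t)|>0$, so $0<c_0\le\psi(|x_i-x_j(t)|)\le\psi(\delta_0)<\infty$ uniformly in $t$, and the orbit $(\vx,\vv)(t)$ stays in a fixed compact set $\mathcal K$ on which the vector field is smooth; moreover $\sum_iv_i$ is conserved (the alignment sum vanishes by symmetry and $\sum_iu_i=0$) and, normalised by \eqref{mean0}, equals $0$, so that $\sum_ix_i\equiv 0$ as well. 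I would then invoke LaSalle's invariance principle with Lyapunov function $\mathcal E$: the $\omega$-limit set of the orbit lies in the largest invariant subset of $\{\ddt\mathcal E=0\}=\{v_i=v_j\ \forall\,i,j\}$, which under the mean-zero normalisation is $\{\vv=\mathbf 0\}$. On an invariant orbit contained in $\{\vv=\mathbf 0\}$ one has $\dot{\vx}\equiv\mathbf 0$ and $\dot v_i\equiv u_i$, so invariance forces $u_i=0$ for all $i$; since $\phi>0$, $u_1=0$ gives $x_1-x_2=z_1$ and then inductively $x_{i-1}-x_i=z_{i-1}$ for $i=2,\dots,N$. Together with $\sum_ix_i=0$ these relations determine a single configuration $\vx^{\infty}$, so the largest invariant set is the single equilibrium $(\vx^{\infty},\mathbf 0)$; a bounded orbit whose $\omega$-limit set is a singleton converges to it, which gives $\lim_{t\to\infty}\vx(t)=\vx^{\infty}$ with $x_i^{\infty}=x_{i-1}^{\infty}-z_{i-1}$, as claimed. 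The one point needing care here is the passage from the $\liminf$ in \eqref{noas} to the uniform separation $\delta_0>0$, which is exactly what keeps the orbit inside the region where the field — and thus LaSalle's machinery — is well behaved.
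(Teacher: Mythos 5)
First, note that the survey itself does not prove Theorem \ref{sparsethm}: it is quoted from \cite{ckpp}, so your attempt can only be measured against the strategy of that paper and against internal correctness. Your part (A) is sound and matches the intended route: the identity $u_i=-\nabla_{x_i}V$ with $V=\tfrac12\sum_k\Phi(|x_{k-1}-x_k-z_{k-1}|^2)$, the dissipation of $\mathcal E=\tfrac12\sum_i|v_i|^2+V$, the resulting uniform velocity bound, and the cluster argument of Theorem \ref{thm1} with all control contributions dumped into the bounded group do give global non-collisional solutions. Part (C) via LaSalle is also essentially correct \emph{provided} the orbit is precompact and uniformly separated, i.e.\ provided (C) is read cumulatively with (B) (boundedness of positions) — you assume this silently, and you should say so, since \eqref{noas} alone does not give a bounded orbit.

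The genuine gap is part (B), which is precisely where the hypotheses $\alpha\geq 2$ and (i)/(ii) enter, and your text replaces the two decisive steps by declarations of intent. First, for $\beta>1$ you never construct the functional whose monotonicity the threshold (ii) is supposed to guarantee; note that the energy argument you use elsewhere cannot be the right mechanism here, both because it would produce a threshold scaling like $\sum_{i,j}|v_{0i}-v_{0j}|^2$ rather than the stated $\sqrt{\sum_{i,j}|v_{0i}-v_{0j}|^2}$, and because a lower bound on $\sum_k\int_{|d_k(t)|^2}^\infty\phi$ only prevents \emph{one} link from escaping, not all of them, so it does not bound the diameter. The actual argument must pair the fluctuation $\mathcal V(t)=\bigl(\sum_{i,j}|v_i-v_j|^2\bigr)^{1/2}$ with the potential tails through a system of differential inequalities (estimating $\bigl|\tfrac{\dd}{\dt}\sum_k\int_{|d_k|^2}^\infty\phi\bigr|$ by $\mathcal V$ and the control forcing in $\tfrac{\dd}{\dt}\mathcal V$ by the $\phi$-terms), and this is not carried out. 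Second, your route to $\max_{i,j}|v_i-v_j|\to0$ is: (a) obtain a uniform-in-time lower bound on $\min_{i,j}|x_i-x_j|$ from $\alpha\geq2$, then (b) apply Barbalat. Step (a) is asserted, not proved, and as ordered it is unlikely to work: the only quantitative input available at that stage is the finite dissipation $\int_0^\infty\sum_{i,j}\psi(|x_i-x_j|)|v_i-v_j|^2\dt<\infty$, and with $\alpha\geq2$ (non-integrable $\sqrt\psi$) a Cauchy--Schwarz argument only excludes collisions in finite time windows; it does not rule out a slow asymptotic approach over $t\to\infty$, so it cannot deliver the uniform separation needed for the uniform continuity in Barbalat. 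In \cite{ckpp} (as in the bonding-force result, Theorem \ref{k-p}(ii')) the uniform separation and the velocity decay are obtained together, through a bootstrap in which the decay of $\mathcal V$ feeds back into the distance estimate; your sequential scheme (separation first, alignment second) is therefore circular or at best incomplete as written. Until these two steps — the $\beta>1$ confinement under (ii) and the alignment/separation bootstrap exploiting $\alpha\geq2$ — are actually proved, part (B) remains unestablished.
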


\begin{rem}\rm
It is worthwhile to have a closer look at part (C) of the above theorem and particularly assumption \eqref{noas}. The reason to exclude the asymptotic collisions is related to collision avoidance. Take, for example, two particles in $\R$, with $z_1=-1$ then the resulting pattern has to be of the form $x_2^\infty = x_1^\infty+1>x_1^\infty$. However if initially $x_1(0)>x_2(0)$, then the particles change order, which means that they collide. This is however impossible by Theorem \ref{sparsethm} (A). In other words we need to exclude the situations when the control leads to a finite-time collision. Of course in $d\geq 2$ such situation is very unlikely.
\end{rem}

\section{Singular Cucker-Smale model: kinetic equation}\label{kinet}

\subsection{Formal derivation} 
The particle model provides the most precise description of the evolution of the particles, but in the case of large $N$, it quickly becomes impractical. With $N\to\infty$, the microscopic models are too computationally intensive, and it is much more efficient to perform numerical simulations for what we call {\emph{mean-field limit system}}, see for example \cite{BCDPZ}. This can be viewed as a model in the {\it mesoscopic} scale. Roughly speaking, instead of tracing position and momentum of each particle, we
look for the distribution (or 
probability) of the particles at time $t$, position $x$, and  with velocity $v$. Hence, the sough object of analysis is a distribution of the type
\begin{equation}\label{f-def}
 f(t,x,v): [0,T) \times \R^d_{x} \times  \R^d_{v} \to \R.
\end{equation}
At the right-hand side of the above expression, $\R$ should be viewed only formally, since $f$ might be barely a measure. The evolution of $f$ is described by the Vlasov-type system, which, along with Boltzmann equation, is the backbone of kinetic theory \cite{wasser}. The methodology developed to deal with Vlasov-type equations is robust but in the case of systems with singular interactions there is no general approach.
 
 Before we deliberate further on the matter, let us explain the link between the particle and the kinetic models. Assume that we have an $N$ particle system in  the following form
\begin{equation}\label{cs-m}
\left\{
 \begin{array}{l}
 \displaystyle  \ddt x_i=v_i,\\
 \displaystyle  \ddt v_i= \sum_{j=1}^N m_j (v_j -v_i) \phi(|x_i - x_j|),
 \end{array}\right.
\end{equation}
where  total mass of the particles, reads
$\sum_{j=1}^N m_j=1$
and for simplicity equals one. Note that \eqref{cs-m} becomes \eqref{cs} with $m_i=\frac{1}{N}$ for all $i=1,...,N$. Here, for the sake of clarity of presentation, we skip the dependence of $x_i=x_i^N$, $v_i=v_i^N$ and $m_i=m_i^N$ on $N$ but it should be noted that, naturally, the solution itself changes with $N$. 

Next, having a solution to (\ref{cs-m}), whose existence was discussed in the previous section, we aim to let $N\to\infty$ and define \eqref{f-def} as a limit of solutions to the particle system \eqref{cs-m} written as follows
\begin{equation}\label{atomic}
 f_N(t,x,v) = \sum_{i=1}^N m_i\delta_{x_i(t)} \otimes \delta_{v_i(t)},
\end{equation}
where $x_i$ and $v_i$ denote a position and velocity of $i$th particle obtained by solving the CS particle system \eqref{cs}. We will refer to \eqref{atomic} as the \emph{atomic solution}.

This way defined $f_N$ satisfies
\begin{equation}\label{x2}
 \ddt f_N=0, \mbox{ \ \ with  \ } \ddt=\partial_t + \frac{\dx}{\dt}\nabla_x + \frac{\dd v}{\dt} \nabla_v
\end{equation}
in the following distributional sense: for any
smooth test function 
\begin{equation*}
 \Phi: [0,T)\times \R^d_x \times \R_v^d \to \R
\end{equation*}
with a compact support in $\R^d_x\times \R^d_v$ and $\Phi|_{t=T} \equiv 0$, we have
\begin{equation*}
 \int_0^T \int_{\R_x^d} \int_{\R_v^d}  f_N(t,x,v) \ddt \Phi(t,x,v) \dd v \dxdt =\int_{\R^d_x}\int_{\R^d_v}f_N|_{t=0} \Phi(0,x,v) \dx\dd v.
\end{equation*}
Then by the definition of $f_N$ (and particularly \eqref{cs}) it is easy to show that
\begin{align}\label{dform1}
\int_0^T\int_{\R^d_x}\int_{\R^d_v}f_N[\partial_t + v\cdot\nabla_x + F(f_N)\cdot\nabla_v]\Phi(t,x,v)\dd v\dxdt = \nonumber\\
=\int_{\R^d_x}\int_{\R^d_v}f_N|_{t=0} \Phi(0,x,v) \dx\dd v,
\end{align}
where
\begin{align*}
F(f_N)(t,x,v)
& =  \sum_{j=1}^N m_j(t) (v_j-v) \psi(|x-x_j|)\\
&=\int_{\R^d_x}\int_{\R^d_v}(w-v)\psi(|x-y|)f_N(t,y,w)\dd y\dd w.
\end{align*}
Then, passing to the limit in \eqref{dform1} as $N\to \infty$, assuming  that each term is well defined and smooth, and that
\begin{equation}\label{x1}
 \lim_{N\to \infty}f_N =f,
\end{equation}
we deduce that $f$ satisfies the same equation as $f^N$.
It is a distributional version of the Vlasov-type equation
\begin{equation}\label{cs-k}
 f_t + v \cdot \nabla_x f + \div_v(F(f)f)=0
\end{equation}
with
\begin{equation}\label{cs-f}
 F(f)= \int_{\R^d_y} \int_{\R^d_w} (w-v) \psi(|x-y|) f(t,y,w) \dd y\dd w.
\end{equation}
Note that a very convenient property of Vlasov-type equations is that due to the {\emph{ nonlocal}} interactions (contrary to Boltzmann equation) the solution of the particle system already is a distributional solution to a Vlasov-type equation. This is the reason why such a simple approximation is possible.

%

In general, the limit passage $N\to\infty$ requires some more information about  uniform estimates for $f_N$. This is in fact the gist of the problem, and we postpone the discussion on this issue to the following sections. Here let us only briefly mention  that
even defining $F(f)f$, is not straightforward since  for the singular weight $\psi$, if $f$ is a Radon measure then $F(f)$ is an $L^p$ function, and products of  $L^p$ functions with measures might not be well defined.

As for the topology of convergence, for the method shown above, a suitable choice is the Wasserstein distance. For the sake of this survey we shall introduce a simple version of such metrics i.e. the bounded-Lipschitz distance. Given two Radon measures $f_1$ and $f_2$ let
\begin{equation}\label{bLd}
d_1(f_1,f_2)=\sup\left\{
\int_{\R^d} (f_1-f_2)\phi dx : \phi \in Lip(\R^d), Lip(\phi) \leq 1, |\phi|_\infty\leq 1\right\},
\end{equation}
where $Lip(\R^d)$ is the space of Lipschitz continuous functions and $Lip(\phi)$ is the Lipschitz constant of function $\phi$. Then $d_1$ is the bounded-Lipschitz distance, sometimes referred to as Monge-Kantorovich-Rubinstein distance and it is equivalent to the Wasserstein-1 distance,
we refer the reader to \cite{wasser} or \cite{vil}. By ${\mathcal M}$ we denote the metric space of all nonnegative Radon measures with topology generated by $d_1$. Its subspace of probabilistic measures is denoted by ${\mathcal P_1}$

\subsection{Local-in-time well-posedness}

A possible  approach to this subject, including singular communication weight, has been presented in \cite{carchoha}. To the best of our knowledge, it was the first existence result for the singular CS kinetic equation. Here we present only a special case of this result tailored to the singular CS model. Interested reader is refereed to  \cite{carchoha}, where a more general variant with nonlinear velocity coupling was analysed.
\begin{theo}[\cite{carchoha}]\label{ccm}
 Suppose $\alpha \in (0,d-1)$ and $p>1$ fulfils
$ (\alpha +1)p' < d$ with $p': \frac{1}{p}+\frac{1}{p'}=1.$
If initial datum $f_0$ is nonnegative and has a compact support in the velocity space and 
\begin{equation*}
 f_0 \in (L^1\cap L^p)(\R^d \times \R^d) \cap \mathcal{P}_1(\R^d\times \R^d),
\end{equation*}
then there exists $T>0$ such that there exists a unique weak solution
\begin{equation*}
0\leq f\in L^\infty(0,T;(L^1\cap L^p)(\R^d \times \R^d))\cap {\mathcal C}([0,T],  \mathcal{P}_1(\R^d\times \R^d))
\end{equation*}
for  system (\ref{cs-k}) on time interval $[0,T]$.

Furthermore, if $f_i$ with $i=1,2$ are two such solutions, then the following $d_1$-stability estimate holds
\begin{equation*}
 \ddt d_1(f_1(t),f_2(t)) \leq C d_1(f_1(t),f_2(t)) \mbox{ \ \ for \ \ } t \in [0,T]
\end{equation*}
for a positive constant $C$.
\end{theo}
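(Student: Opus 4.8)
The plan is to solve a regularized version of \eqref{cs-k}--\eqref{cs-f}, pass to the limit using uniform estimates, and close uniqueness with a coupling/characteristics argument. The recurring difficulty — and the reason for both the dimensional restriction $(\alpha+1)p'<d$ and the $L^p$-requirement on $f_0$ — is that the nonlocal force $F$ in \eqref{cs-f} is a (super-)singular convolution which must nevertheless be controlled by the weak Wasserstein topology and by the propagated $L^p$-density bound.

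\textbf{Regularization and uniform estimates.} Replace $\psi(s)=s^{-\alpha}$ by a smooth, positive, non-increasing truncation $\psi_\varepsilon\uparrow\psi$; then $F_\varepsilon$ is globally Lipschitz and \eqref{cs-k} admits a unique solution $f_\varepsilon$, obtained by transporting $f_0$ along its characteristic flow, which stays nonnegative, a probability measure, and in $L^1\cap L^p$. Two facts make the estimates uniform in $\varepsilon$ on a short interval. First, the Cucker--Smale structure of the force gives a maximum principle for the velocity support (the kinetic counterpart of the bound $|v_i|\le\sqrt{2E_k(0)}$ in Proposition \ref{dis}): since $v\cdot F_\varepsilon(f_\varepsilon)(t,x,v)\le 0$ when $|v|$ equals the radius of the velocity support, this radius is non-increasing, so $\supp_v f_\varepsilon(t)\subseteq B(0,R_0)$ for all $t$, with $B(0,R_0)\supseteq\supp_v f_0$ (this uses the compact velocity support of $f_0$). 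Second, since $\operatorname{div}_v F_\varepsilon(f_\varepsilon)=-d\,(\psi_\varepsilon*_x\rho_{f_\varepsilon})$ with $\rho_{f_\varepsilon}(t,x)=\int f_\varepsilon(t,x,w)\,dw$, the standard transport computation gives
\[
\frac{d}{dt}\|f_\varepsilon(t)\|_{L^p}^p=(p-1)\,d\int f_\varepsilon^p\,(\psi_\varepsilon*_x\rho_{f_\varepsilon})\,dx\,dv\le C\,\|\psi_\varepsilon*_x\rho_{f_\varepsilon}\|_{L^\infty}\,\|f_\varepsilon(t)\|_{L^p}^p.
\]
By Hölder in $v$, $\|\rho_{f_\varepsilon}\|_{L^p_x}\lesssim R_0^{d/p'}\|f_\varepsilon\|_{L^p}$; splitting $\psi_\varepsilon$ into its parts near and far from the origin and using Young's inequality (where $\alpha p'<d$, a consequence of the hypothesis, enters), $\|\psi_\varepsilon*_x\rho_{f_\varepsilon}\|_{L^\infty}\lesssim\|\rho_{f_\varepsilon}\|_{L^1}+\|\rho_{f_\varepsilon}\|_{L^p}\lesssim 1+\|f_\varepsilon\|_{L^p}$. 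This closes into $\frac{d}{dt}\|f_\varepsilon\|_{L^p}^p\le C(1+\|f_\varepsilon\|_{L^p})\|f_\varepsilon\|_{L^p}^p$, so there is $T>0$ depending only on $R_0$ and $\|f_0\|_{L^p}$ with $\sup_{[0,T]}\|f_\varepsilon(t)\|_{L^1\cap L^p}\le C$; the same splitting applied to $\nabla_x\psi$ — which needs the full strength $(\alpha+1)p'<d$ — gives a uniform bound on $F_\varepsilon(f_\varepsilon)$ in $L^\infty(0,T;W^{1,\infty}(\R^{2d}))$, and the velocity bound gives equicontinuity of $t\mapsto f_\varepsilon(t)$ in $d_1$.

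\textbf{Compactness and the limit.} By Banach--Alaoglu together with Prokhorov and Arzel\`a--Ascoli, along a subsequence $f_\varepsilon\overset{\ast}{\rightharpoonup} f$ in $L^\infty(0,T;L^p)$ and $f_\varepsilon\to f$ in $C([0,T];\mathcal{P}_1)$, and $f$ inherits the uniform bounds. The only delicate point is passing to the limit in $\operatorname{div}_v(F_\varepsilon(f_\varepsilon)f_\varepsilon)$. Since $F$ is linear in its argument, write $F_\varepsilon(f_\varepsilon)-F(f)=F_\varepsilon(f_\varepsilon-f)+(F_\varepsilon-F)(f)$; the second term vanishes as $\psi_\varepsilon\uparrow\psi$, and the first is handled by the key estimate: for $g$ a difference of two probability measures with densities bounded in $L^p$ and velocity support in $B(0,R_0)$, splitting $\psi=\psi\mathbf{1}_{\{s\le\delta\}}+\psi\mathbf{1}_{\{s>\delta\}}$, bounding the first piece by the $L^p$-bound at cost $\delta^{d/p'-\alpha}$ and the (Lipschitz, with constant $\lesssim\delta^{-\alpha-1}$) second piece by $d_1$, and optimizing over $\delta$, yields $\|F(g)\|_{L^\infty}\le C\,d_1(g^+,g^-)^{\theta}$ for some $\theta\in(0,1)$. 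Since $d_1(f_\varepsilon,f)\to 0$ and the $L^p$-norms are uniformly bounded, $F_\varepsilon(f_\varepsilon)\to F(f)$ uniformly on compact sets, which is exactly what is needed to pass to the limit in the weak formulation \eqref{dform1} (the test function has compact support) and produce a weak solution with the asserted regularity.

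\textbf{Uniqueness and the $d_1$-stability estimate.} Let $f_1,f_2$ be two solutions with the stated regularity. The uniform $W^{1,\infty}_x$-bound on $F(f_i)$ from the first step makes each characteristic flow $(X_i,V_i):[0,T]\times\R^{2d}\to\R^{2d}$ well defined, with $f_i(t)=(X_i(t,\cdot),V_i(t,\cdot))_\#f_i(0)$. Fix an optimal transference plan $\pi_0$ for $d_1(f_1(0),f_2(0))$ and set $Q(t):=\int\big(|X_1(t,z)-X_2(t,z)|+|V_1(t,z)-V_2(t,z)|\big)\,d\pi_0(z)$, so that $d_1(f_1(t),f_2(t))\le Q(t)$. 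Differentiating and using the flow ODEs, the contribution of the Lipschitz regularity of $F(f_1)$ is $\lesssim Q(t)$, while the remaining term $\int\big|F(f_1)-F(f_2)\big|\big(X_2(t,z),V_2(t,z)\big)\,d\pi_0$ must be treated with care: one does \emph{not} bound $\|F(f_1-f_2)\|_{L^\infty}$ (which gives only a sublinear, Osgood-type bound and no uniqueness), but instead expands $F(f_i)$ through the pushforward representation, so that the singular factors $\psi$ and $\nabla\psi$ get integrated against the $L^p$-densities $\rho_{f_i}$; Young's inequality — this is the second place $(\alpha+1)p'<d$ is used, to make $|x|^{-\alpha-1}*\rho_{f_i}$ bounded — then bounds this term by $C\,Q(t)$ as well. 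Hence $Q'(t)\le C\,Q(t)$, so $Q(t)\le e^{Ct}d_1(f_1(0),f_2(0))$; in particular $\frac{d}{dt}d_1(f_1(t),f_2(t))\le C\,d_1(f_1(t),f_2(t))$ and, taking $f_1(0)=f_2(0)$, uniqueness follows. The main obstacle throughout is precisely this tension between the super-singularity of $\psi$ and the weak topologies in which compactness and stability must be measured; it is resolved only because the $L^p$-density bound is propagated and because $(\alpha+1)p'<d$ renders the relevant Riesz-type convolutions bounded.
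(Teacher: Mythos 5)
Your proposal follows essentially the same route as the paper's sketch and the cited work \cite{carchoha}: propagate the compact velocity support and the $L^p$ bound so that $(\alpha+1)p'<d$ makes $F(f)$ a $W^{1,\infty}$ field, construct solutions by regularization and compactness in $d_1$, and prove uniqueness by comparing the characteristic flows of two solutions along an optimal plan with a Gronwall argument in the Wasserstein/bounded-Lipschitz metric. The only point you gloss over is that in the uniqueness step the singular factors $\psi$, $\nabla\psi$ are evaluated along interpolations of the two flows rather than along an actual solution, so to integrate them against an $L^p$ density one must invoke the short-time bi-Lipschitz character of the flow maps (available from the uniform $W^{1,\infty}$ bound on $F$), exactly as in the original proof.
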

The key point of Theorem \ref{ccm}
is the uniqueness. It follows from the fact that the singularity allows to consider a functional setting such that the field $F(f)$ is indeed Lipschitz continuous. 
The proof is based on the theory of optimal transport 
 to control two solutions considered in setting of the flow generated by $v$ and $F(f)$. Since, as we mentioned,
the regularity is reasonably high, there is no problem to define characteristics. Hence comparison of two solutions in the Wasserstein metric is possible. 

\subsection{Global-in-time measure-valued solutions}
The local existence result presented in the previous section requires  the initial data to belong to the $L^p(\R^d \times \R^d)$ space. In consequence, the solution itself is also an $L^p$-function, which rules out 
a very interesting class of solutions -- atomic solutions-- given by (\ref{atomic}).

The following result from \cite{mp} embraces the rich dynamics of the CS model admitting solutions that for all $t>0$ live in the space of Radon measures, which we  denote here by ${\mathcal M}$. Clearly, such class includes the atomic solutions. The price that needs to be paid for such a wide class of solutions is reduction of the range of singularity to $\alpha\in(0,\frac{1}{2})$, so that we can operate within the framework of higher regularity for the particle system granted by Theorem \ref{piecthm2}.

\begin{theo}[\cite{mp}]\label{main3}
Let $0<\alpha<\frac{1}{2}$. For any compactly supported initial data $0\leq f_0\in {\mathcal M}$ and any $T>0$, Cucker-Smale's flocking model (\ref{cs-k}) admits at least one  weak solution $0\leq f\in {\mathcal C}_{weak}([0,T], {\mathcal M}(\R^d\times \R^d))$ with {$\partial_t  f \in L^p (0, T ; 
(C^1(\R^d \times \R^d) )^* )$} for some $p>1$ (here $(C^1)^*$ is the dual space of $C^1$). 

Moreover if $f_0$ is atomic  then $f$ is atomic too, hence, by Theorem \ref{piecthm2}, it is unique and it corresponds to the solution of the particle system \eqref{cs}.
\end{theo}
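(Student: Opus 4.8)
The plan is to obtain $f$ as a limit of atomic solutions $f_N$ of the form \eqref{atomic}, exploiting the quantitative control over the particle system guaranteed by Theorem \ref{piecthm2} (valid precisely because $\alpha<\tfrac12$), and then to upgrade weak-$*$ compactness in $\mathcal M$ to a genuine weak solution of \eqref{cs-k}. First I would fix a compactly supported $f_0\in\mathcal M$ and choose a sequence of atomic measures $f_N^0=\sum_{i=1}^N m_i^N\delta_{x_{0i}}\otimes\delta_{v_{0i}}$ with $d_1(f_N^0,f_0)\to 0$, all supported in a common compact set; such an approximation is standard. For each $N$, Theorem \ref{piecthm2} provides a unique solution $(\vx^N,\vv^N)$ of the particle system \eqref{cs} (equivalently \eqref{cs-m}) with absolutely continuous velocity, and hence a well-defined atomic curve $t\mapsto f_N(t)$; by construction (the computation leading to \eqref{dform1}) each $f_N$ is a distributional solution of \eqref{cs-k} on $[0,T]$. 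The crucial \emph{a priori} bounds are: conservation of momentum and the energy dissipation estimate \eqref{diss} (Proposition \ref{dis}), which give $\sup_t|v_i^N(t)|\leq\sqrt{2E_k(0)}$ uniformly in $N$, hence a uniform compact support in the velocity variable, and — via $\dot x_i=v_i$ — a uniformly bounded spatial spread on $[0,T]$. Thus all $f_N(t)$ live in a fixed compact subset of $\R^d\times\R^d$, so $\{f_N\}$ is tight and bounded in total mass.

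Next I would establish equicontinuity of $t\mapsto f_N(t)$ in $d_1$. Testing $\ddt f_N=0$ against a Lipschitz function $\phi$ with $\mathrm{Lip}(\phi)\le1$, $|\phi|_\infty\le1$ and using the distributional formulation gives $\frac{d}{dt}\int\phi\, df_N = \int\big(v\cdot\nabla_x\phi + F(f_N)\cdot\nabla_v\phi\big)df_N$; the first term is bounded by $\sqrt{2E_k(0)}$, while for the second I would use the square-integrability of $\psi$ near $0$ (the $\alpha<\tfrac12$ hypothesis, as in the proof of Theorem \ref{piecthm2}) together with Young's inequality to bound $\int|F(f_N)|\,df_N$ in terms of an $L^2$-type quantity that is controlled by the energy and the uniform support. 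This yields $|\int\phi\,df_N(t)-\int\phi\,df_N(s)|\le C|t-s|$ uniformly in $N$ and in admissible $\phi$, i.e. uniform Lipschitz-in-time continuity in $d_1$. By Arzelà–Ascoli in $\mathcal C([0,T],\mathcal M)$ (using tightness for compactness at each fixed time) a subsequence converges, $f_N\to f$ in $\mathcal C_{weak}([0,T],\mathcal M)$, with $f(0)=f_0$ and $f\ge0$ compactly supported.

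It remains to pass to the limit in the weak formulation \eqref{dform1}. The terms $\int f_N\partial_t\Phi$ and $\int f_N\, v\cdot\nabla_x\Phi$ converge immediately by weak-$*$ convergence since their integrands are bounded continuous on the common compact support. The genuinely delicate term is the nonlocal forcing $\int f_N F(f_N)\cdot\nabla_v\Phi$, because $F(\cdot)$ involves the singular kernel $\psi(|x-y|)=|x-y|^{-\alpha}$ and one is multiplying a measure by a function that is only $L^p$ in general — exactly the obstruction flagged in the text. I expect \textbf{this to be the main obstacle}. The way to handle it is to write the double integral $\iint (w-v)|x-y|^{-\alpha}\nabla_v\Phi(t,x,v)\,df_N(t,x,v)\,df_N(t,y,w)$ against the product measure $f_N\otimes f_N$, observe that $f_N\otimes f_N\rightharpoonup f\otimes f$ weakly-$*$, and control the contribution of the diagonal $\{x=y\}$ uniformly: by \eqref{diss} the set of times and index pairs where $\theta_{ij}$ is large has small energy weight, and more robustly one uses that $(w-v)|x-y|^{-\alpha}$ is dominated near the diagonal by $|x-y|^{1-\alpha}$ times a bounded factor along trajectories (the relative-velocity-vanishing mechanism from Remark \ref{dyn} and the two-particle analysis), which is integrable since $1-\alpha>0$. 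A truncation argument — replace $\psi$ by $\psi_\varepsilon=\min(\psi,\varepsilon^{-\alpha})$, pass to the limit in $N$ for fixed $\varepsilon$ using boundedness, then send $\varepsilon\to0$ using the uniform diagonal estimate — makes this rigorous and simultaneously shows $F(f)f$ is a well-defined element of $L^p(0,T;(C^1)^*)$, giving $\partial_t f\in L^p(0,T;(C^1(\R^d\times\R^d))^*)$. Finally, for the uniqueness/propagation-of-atomicity statement: if $f_0$ is atomic, then every approximating $f_N^0$ can be taken equal to $f_0$ (so the ``approximation'' is trivial), the particle solution from Theorem \ref{piecthm2} is unique, and any weak measure solution with atomic initial data must remain atomic — its atoms necessarily evolve by the characteristic ODEs \eqref{cs}, which by Theorem \ref{piecthm2} have a unique solution — so $f$ coincides with the pushforward of $f_0$ along that flow and is unique.
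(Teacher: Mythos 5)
Your existence argument follows essentially the same route as the paper: atomic approximation of $f_0$, the particle solutions of Theorem \ref{piecthm2} viewed as atomic distributional solutions \eqref{atomic} of \eqref{cs-k}, uniform compact support and velocity bounds from Proposition \ref{dis}, $d_1$-equicontinuity and compactness, and passage to the limit in the forcing term by rewriting it against the product measure $f_N\otimes f_N$ and cutting off the singular kernel (the paper does the same thing in the form of a Lipschitz approximation $g_m\to g$, converging first in $N$, then in $m$). One caveat: your justification for removing the truncation is not right as stated. The claimed domination of $(w-v)\psi(|x-y|)$ near the diagonal by $|x-y|^{1-\alpha}$ times a bounded factor would require the relative velocity to vanish at collisions, but for $\alpha<\tfrac12$ this is exactly the regime where particles can cross with nonzero relative velocity (the integrable-$\theta_{ij}$ case of Remark \ref{dyn}), so that mechanism fails. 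What actually closes the argument, and what the paper leans on, is the uniform-in-$N$ regularity of trajectories provided by the $\alpha<\tfrac12$ theory (time-integrability of $\psi$ along trajectories, usable together with \eqref{diss}), which yields that $F(f)$ is integrable with respect to $f$ and that the near-diagonal contribution is uniformly small.

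The genuine gap is the ``moreover'' part. You assert that a measure-valued solution with atomic initial data must stay atomic because ``its atoms necessarily evolve by the characteristic ODEs,'' but at this regularity level there is no characteristics theory to invoke: $F(f)$ is not Lipschitz, the product $F(f)f$ is only defined weakly, and a priori nothing prevents the mass of an atom from dispersing into a diffuse measure. Ruling this out is precisely the content of the weak-atomic uniqueness result, and it is where the real work lies. The paper proves it by localizing: restricting $f$ to a small ball around one atom (the piece $f^D$ in \eqref{fd}), using the cone-shaped local propagation of the support to keep $f^D$ separated from the remainder $f^C$ and hence away from the singularity of $\psi$, introducing the auxiliary trajectory $(x_a,v_a)$ of \eqref{xva} driven by $f^C$, testing equation \eqref{weakfd} with $|v-v_a|^2$ and $|x-x_a|^2t^{-1}$, and closing a Gronwall estimate whose coefficient $A(t)\sim t^{-1/2-\alpha}$ is integrable precisely because $\alpha<\tfrac12$; the procedure is then iterated between sticking times using $\partial_t f\in L^p(0,T;(C^1)^*)$. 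Note also that taking the approximating sequence trivially equal to $f_0$ only shows that the particular solution you construct is atomic; it gives no uniqueness statement, which is the whole point of that part of the theorem.
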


{\it Idea of the proof.}
The proof of existence is based on an idea of mean field limit,  presented at the beginning of this section. It  involves
\begin{itemize}
\item[i)] Given initial data $f_0\in {\mathcal M}$, we approximate it by atomic measures of the form
\begin{equation*}
 f_{N,0}(t,x,v) = \sum_{i=1}^N m^N_{0i} \delta_{x^N_{0i}} \otimes \delta_{v^N_{0i}}.
\end{equation*}
\item[ii)] For each fixed $N$, thanks to the results from Section \ref{Sec:3.2} we establish existence of solutions to particle system \eqref{cs-m} with initial data $(\vx^N_0, \vv^N_0)$. By \eqref{atomic}, such solutions correspond to atomic solutions $f_N$ of Vlasov-type equation \eqref{cs-k} (satisfied at least in the distributional sense).
\item[iii)] We converge with $N\to\infty$ and define $f$ -- the candidate for the solution of \eqref{cs-k} associated with initial data $f_0$, as a limit of $f_N$ in the bounded-Lipschitz distance.
\item[iv)] Through sufficient information on the uniform regularity of the approximate solutions we prove that $f$ satisfies \eqref{cs-k}.
\end{itemize}

Points (i) and (ii) of the above scheme are clear, moreover,
the existence of a weak* limit $f$ is straightforward by Banach-Alaoglu theorem. The difficult part is to prove that $f$ is the sought solution to \eqref{cs-k}.
The crucial element of the proof is to show the convergence of the nonlinear alignment force term (compare with \eqref{dform1})
\begin{align}\label{MR1}
\int_0^T\int_{\R^{2d}}F(f_N)f_N\nabla_v\Phi \dx\dd v\dt,\quad\mbox{as}\quad N\to\infty.
\end{align}
In particular if $f_N\stackrel{*}{\rightharpoonup} f$ then it is not clear whether $F(f_N)f_N\stackrel{*}{\rightharpoonup} F(f)f$. It is useful to look at (\ref{MR1}) as
\begin{align*}
\int_0^T\int_{\R^{4d}}g d\mu_ndt\quad &\mbox{for}\ d\mu_N:= f_N(x,v,t)\otimes f_N(y,w,t)\dx\dd v\dd y\dd w\\
&\mbox{and}\ g(x,y,v,w,t):=\psi(|x-y|)(w-v)\nabla_v\Phi(t,x,v).
\end{align*}

Note that thanks to the above representation convergence of a product in \eqref{MR1} is reduced to a convergence of the measure $\mu_N$ tested by the function $g$. To overcome the fact that $g$ is not Lipschitz continuous we approximate it by a Lipschitz continuous family $g_m\to g$, such that $|g_m|\nearrow |g|$. First we converge with $N\to\infty$ and then with $m\to\infty$. Then a detailed analysis, based on the uniform regularity of trajectories provided by Theorem \ref{piecthm2}, leads to the proof that $F(f)$ is an integrable function  with respect to measure $df = f dx dv$.
Hence, we are able to define $F(f)f$, even though we cannot do it for $F(f)h$ for general $h\in {\mathcal M}$.\hfill$\square$

Theorem \ref{main3} provides existence of weak measured valued solutions globally in time, but does not solve the problem of uniqueness. In fact, the regularity is insufficient
to estimate the difference between the two supposedly distinct solutions. However, for a special case of atomic solutions we are able to 
obtain the so-called \emph{weak-atomic uniqueness} result. I states that if initial data is atomic, then the solution is atomic as well, and thus, by Theorem \eqref{piecthm2}, it is unique. The proof is based on the analysis
of possible supports of constructed solutions. Its main steps are presented in the following section.

\subsection{Weak-atomic uniqueness}
We present the idea of weak-atomic uniqueness: we aim to prove that if  the initial data is atomic, then the solution must be atomic too. 
%
%

It is sufficient to concentrate our attention on small times near $t=0$.
Let $f_0$ be atomic (recall \eqref{atomic}). Our goal is to restrict $f_0$ to small balls with just one particle 
(say $\underline{i}$th particle). Then we use the local propagation of the support to prove that 
the measure that initially formed the $\underline{i}$th particle remains atomic for short time. Since
\begin{align*}
f_0=\sum_{i=1}^Nm_i\delta_{x_{0i}}\otimes\delta_{v_{0i}},
\end{align*}
we have a finite number of initial positions and velocities of the particles $(x_{0i},v_{0i})$ for $i=1,...,N$. It implies that there exists $R_1>0$ such that for all $R<R_1$, we have
\begin{align}\label{oddziel}
f_0|_{B_i(R)} = m_i\delta_{x_{0i}}\otimes\delta_{v_{0i}}
\end{align}
where $B_i(R)$ is a ball centred at $(x_{0i}, v_{0i})$ with radius $R$. In order to finish the proof it suffices to show that there exists $T^*$ such that 
\begin{align}\label{fd}
f^D(t) :=f(t)|_{B_{\underline{i}}(\frac{R}{4}) } = m_{\underline{i}}\delta_{x_{\underline{i}}(t)}\otimes\delta_{v_{\underline{i}}(t)},\  \ \mbox{for}\ \  t\in[0,T^*].
\end{align}
In other words, restriction of $f$ to a small ball centred initially in an atom (denoted by $f^D$) is precisely the said atom (it does not disperse). Denoting
\begin{align*}
f^C(t) := f(t) - f^D(t)
\end{align*}
%
%
we observe that $f^D$ and $f^C$ satisfy the following equation on $[0,T^*]$:
\begin{align}\label{weakfd}
\partial_t f^D + v\cdot\nabla_xf^D + \div [(F(f^C)+F(f^D))f^D] = 0.
\end{align}
Let us introduce
\begin{subnumcases}{\label{xva}}
 \ddt x_a(t)=v_a(t)\label{xva1}\\
\ddt v_a(t)= \int_{\R^{2d}}\psi(|x_a(t)-y|)(w-v_a(t))f^C(y,w,t) \mathrm{d} y\,\mathrm{d}w,\label{xva2}
\end{subnumcases}
with the initial data $(x_a(0),v_a(0))=(x_{0\underline{i}},v_{0\underline{i}})$. A critical property of the measure-valued solutions, that can be derived from the construction, is the local propagation of the support. Thus, since by definition $f^D$ is just a single particle at $t=0$, then even if it dissolves, for a short time it will still be contained within a cone of the form $x_{0\underline{i}}+tB(v_{0\underline{i}},\epsilon)$ for a small $\epsilon>0$. In particular $f^D$ remains separated from $f^C$, which allows to control the singularity of $\psi$. This ensures that the right-hand side of $\eqref{xva2}$ is smooth and thus (\ref{xva}) has exactly one smooth solution in $[0,T^*]$. Our goal is to 
show that $f^D$ is supported on the curve $(x_a(t),v_a(t))$ and that in fact (\ref{fd}) holds with $(x_{\underline{i}},v_{\underline{i}})\equiv(x_a,v_a)$.

We test (\ref{weakfd}) with $(v-v_a(t))^2$ and  with $|x-x_a(t)|^2t^{-1}$ getting, after some observations, that
\begin{align*}
\ddt\left(\int_{\r^{2d}}(t^{-1}f^D|x-x_a(t)|+f^D|v-v_a(t)|^2)\dx \dd v\right)+ \frac{1}{2}\int_{\r^{2d}}t^{-2}f^D|x-x_a(t)|^2\dx \dd v\\
\leq A(t)\int_{\r^{2d}}t^{-1}f^D|x-x_a(t)|^2+f^D|v-v_a(t)|^2\dx \dd v,
\end{align*}
with $A(t) \sim t^{-1/2-\alpha}$. In order to obtain the above inequality, we again control the singularity by ensuring the separation of $f^D$ and $f^C$ thanks to the cone-shaped propagation of $f^D$.

By Gronwall's lemma we get
\begin{align*}
\int_{\r^{2d}}(t^{-1}f^D|x-x_a(t)|^2+f^D|v-v_a(t)|^2)\dx\dd v\equiv 0
\end{align*}
on $[0,T^*]$. Thus on $[0,T^*]$ we have $x\equiv x_a$ and $v\equiv v_a$ on the support of $f$, 
which is exactly equivalent to (\ref{fd}) and the proof is finished.

We have proved that $f^D$ is mono-atomic. Then,  repeating the procedure for all atoms (the
number is finite) we conclude that $f$ is atomic on a time interval $[0,T^*]$ with possibly smaller,
but still positive $T^* > 0$. This procedure works till the first moment of sticking of an ensemble of
particles. To reach the moment of sticking of the particles we use regularity in time granted by Theorem \ref{main3}: $\partial_t  f \in L^p (0, T ; (C^1)^* )$ with $T > T_1$, where $T_1$ 
is the time of first sticking. Then we continue our procedure from $T_1$ till next time of sticking, up to the finial time of existence $T$.

\section{Singular Cucker-Smale model: hydrodynamics}\label{hydro}

\subsection{Formal derivation} 

Kinetic theory provides a way to model interacting particles on the {\it mesoscopic} scale level, it reduces the computational complexity. However, while the reduction is significant, we still end up with evolutionary equation in dimension equal to twice the dimension of the space, as  both $x$ and $v$ belong to $\R^d$. In case, when the number of particles is significantly large, the {\it macroscopic} scale can be employed instead. In such a scale the evolution of local 
in the phase space averages of density and velocity are studied. This way a hyrdodynamical limit is obtained and it further reduces the computational complexity of the model.

In case of the CS model, it can be understood by taking formally
\begin{align*}
f(t,x,v) = \rho(t,x)\otimes\delta_{u(t,x)}(v)
\end{align*}
in \eqref{cs-k} and testing it  with functions $\phi=1$ and $\phi=v$, respectively, obtaining the following continuity and momentum equations
\begin{subnumcases}{\label{macro}}
\partial_t\rho + \div(u\rho) = 0,\label{cont}\\
\partial_t (\rho u) + \div(\rho u\otimes u ) = \rho\int_{\Omega}(u(y)-u(\cdot))\psi(|y-\cdot|)\rho(y)\dd y.\label{mom}
\end{subnumcases}
Here $\Omega = \R^d$ or $\Omega = {\mathbb T}^d$. The system \eqref{macro} is a hydrodynamical version of the CS model, and we refer to it as the \emph{fractional Euler alignment system}.

We dedicate this section to the discussion of the recent contributions related to this system, focusing particularly on the singular case with weight \eqref{psing}. The past contributions, related mostly to the regular CS model, such as \cite{H-K-K1, H-K-K2, tan, carch} can be found in the survey article \cite{B-D-T1}. Here, we also mention a recent paper \cite{tad3} that follows the ideas from \cite{tan, carch} related to the existence of a critical threshold. 

Before we proceed, let us rewrite equation \eqref{mom} in a way that is often more convenient. Developing both terms on the right-hand side of \eqref{mom} and applying equation \eqref{cont} one obtains
\begin{align*}
\rho\partial_t u + \rho(u\cdot\nabla) u = \rho\int_{\Omega}(u(y)-u(\cdot))\psi(|y-\cdot|)\rho(y)\dd y,
\end{align*}
which then, divided by $\rho$, yields the velocity equation
\begin{align}\label{vel}
\partial_t u + (u\cdot\nabla) u = \int_{\Omega}(u(y)-u(\cdot))\psi(|y-\cdot|)\rho(y)\dd y.
\end{align}
Note that for
$$\rho=\tilde\rho\equiv const.$$
the right-hand side of \eqref{vel} is the fractional Laplacian of order $\frac{\gamma}{2}\in(0,1)$ defined as
\begin{align}\label{fraclap}
\Lambda^\gamma u(x)=(-\Delta)^\frac{\gamma}{2} u(x) := c\int_{\Omega}\frac{u(x)-u(y)}{|x-y|^{d+\gamma}}\dd y,
\qquad \gamma := \alpha-d\in (0,2),
\end{align}
which transforms \eqref{vel} into a variant of pressureless fractional Euler equation
\begin{align}\label{eul}
\partial_t u + (u\cdot\nabla) u + c\tilde\rho(-\Delta)^\frac{\gamma}{2}u = 0.
\end{align}
We emphasize the introduction of the exponent $\gamma = \alpha-d$ which helps to express the singularity in the hydrodynamic variant of the CS model. This of course puts $\alpha$ in the range $(d,d+2)$.
In equations similar to \eqref{eul} the regularity of solutions is granted by the fractional elliptic term. Therefore, to control the regularity of the fractional Euler alignment system \eqref{macro} when $\rho\neq const.$ the boundedness of the density $\rho$ from below is essential. It is a reflection of an ubiquitous, in hydrodynamics, problem of the control of the vacuum.

Rigorous derivation of the fractional Euler alignment system or other hydrodynamic limits of the Cucker-Smale kinetic equation (like in \cite{spoy}) is still mostly open. We refer to papers \cite{H-K-K1, H-K-K2} and to the survey \cite{B-D-T1}, noting that, for the most part, they deal with the model with the regular communication weight.

\subsection{Strong theory on ${\mathbb T}^1$}\label{Storus}

The most recent developments in the study of \eqref{macro} are due to the group Do {\it et.al.} \cite{kis} and independently due to Shvydkoy and Tadmor \cite{tad1, tad2, tad4}. The focus of their research is system \eqref{macro} in a 1D torus ${\mathbb T}$. The crucial discovery is that the quantity
\eq{\label{e}
e(t,x):= u_x(t,x)-\Lambda^\gamma\rho(t,x)=
 u_x(t,x) + \int_{\T}\psi(|x-y|)(\rho(t,x)-\rho(t,y))\dd y
}
for $\alpha=1+\gamma$,
satisfies the continuity equation
\begin{align}\label{magic}
e_t + (ue)_x = 0.
\end{align}
Thus, $q:=\frac{e}{\rho}$, defined for positive $\rho$ is transported with $u$, i.e.
\begin{align*}
q_t + uq_x = 0,
\end{align*}
and thus, in particular, its extrema are constant. Of course, such direct comparison between the regularity of $\rho$ and $u$ strongly relies on the one dimensional domain and as of today, its multidimensional generalization is unknown.

Discovery of the conservation law \eqref{magic} is the basis of multiple results for the fractional Euler alignment system in 1D torus. Methodology used in \cite{kis} and in \cite{tad1, tad2, tad4} varies strongly but the main difficulty boils down to the control of the minimal values of the density $\rho$ in \eqref{mom}. In \cite{kis, tad1}, quantity $e$ was used to provide a bound on the decay of $\rho$ of order $\frac{1}{1+t}$, which was sufficient for existence of smooth solutions and for flocking. However, as an example of application of quantity $e$ we shall present the uniform lower bound on the density obtained later in \cite{tad2}.

\begin{lem}[\cite{tad2}]\label{boundro}
Let $(u,\rho)$ be a smooth solution to \eqref{macro} on $\T$. Assume further that $\rho_0>0$ and that $|q_0|_\infty<\infty$. Then there exists a constant $C_0>0$ such that
\begin{align*}
\rho(t,x)\geq C_0\qquad\mbox{for all}\ x\in\T,\ t\geq 0.
\end{align*}
\end{lem}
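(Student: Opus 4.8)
The plan is to exploit the conservation law \eqref{magic} together with the fact that $q := e/\rho$ is transported by $u$, hence $\|q(t)\|_\infty = \|q_0\|_\infty =: Q$ for all $t \geq 0$. Writing $e = u_x - \Lambda^\gamma\rho$ and $e = q\rho$, we obtain the pointwise identity
\begin{align*}
u_x(t,x) = q(t,x)\rho(t,x) + \int_{\T}\psi(|x-y|)(\rho(t,x)-\rho(t,y))\,\dd y.
\end{align*}
First I would track the density along characteristics: let $X(t)$ solve $\dot X = u(t,X)$, and set $r(t) := \rho(t,X(t))$. From the continuity equation \eqref{cont} one has $\dot r = -r\, u_x(t,X(t))$, so substituting the identity above,
\begin{align*}
\dot r = -q r^2 - r\int_{\T}\psi(|X-y|)(r - \rho(t,y))\,\dd y.
\end{align*}

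Next I would evaluate this ODE at a point where $\rho$ attains its spatial minimum. Let $m(t) := \min_{x\in\T}\rho(t,x)$, attained at $X_{\min}(t)$ (which we may take as a characteristic, by uniqueness of the flow for smooth solutions). At such a point $\rho(t,y) - \rho(t,X_{\min}) \geq 0$ for all $y$, so the integral term
$\int_{\T}\psi(|X_{\min}-y|)(m - \rho(t,y))\,\dd y \leq 0$, which means $-r\int(\cdots) \geq 0$ since $r = m > 0$. Therefore, using $|q| \leq Q$, along this trajectory
\begin{align*}
\dot m \geq -Q m^2 + 0 \geq -Q m^2.
\end{align*}
This differential inequality $\dot m \geq -Q m^2$ does not by itself prevent $m$ from decaying to zero (its solutions behave like $\tfrac{1}{Qt}$), so the crude bound only recovers the $\tfrac{1}{1+t}$ decay already known from \cite{kis, tad1}. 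The extra ingredient needed for a \emph{uniform} lower bound is a matching estimate showing that when $m$ is small, the dropped integral term is actually strictly positive and large enough to counterbalance $-Qm^2$: indeed if the minimum of $\rho$ is much smaller than its average $\bar\rho = \frac{1}{|\T|}\int\rho$ (which is conserved by \eqref{cont}), then $\int_\T \psi(|X_{\min}-y|)(\rho(t,y)-m)\,\dd y$ is bounded below by a positive constant times $\bar\rho - m$, because $\psi(s) = s^{-\alpha}$ is bounded below on the compact torus. Hence $\dot m \geq -Q m^2 + c_0(\bar\rho - m)m$, and the right-hand side is positive whenever $m \leq \bar\rho/2$ and $m$ is below the threshold $c_0\bar\rho/(2(Q + c_0))$; this yields the invariant lower bound $C_0 := \min\{m(0),\ c_0\bar\rho/(2(Q+c_0))\}$.

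The main obstacle I anticipate is making rigorous the step involving the minimum: $m(t) = \min_x \rho(t,x)$ is only Lipschitz in $t$, not differentiable, so one must argue with the lower Dini derivative and justify that at a minimizing point $x^*(t)$ the characteristic through $x^*(t)$ realizes the minimum for an instant, so that the ODE computation applies in the sense of $\liminf$ difference quotients (a standard but delicate Rademacher/envelope argument). A secondary technical point is controlling the sign and size of the nonlocal integral uniformly in $x$ and $t$ — one needs that $\psi(|x-y|) \geq \psi(\operatorname{diam}\T) > 0$ on the torus, together with conservation of mass $\int_\T \rho\,\dd y = |\T|\bar\rho$, to extract the coercive term $c_0(\bar\rho - m)m$. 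Once these two points are in place, the barrier/invariant-region argument for the scalar inequality $\dot m \geq -Qm^2 + c_0(\bar\rho - m)m$ is elementary.
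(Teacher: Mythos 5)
Your argument is correct and is essentially the paper's own proof: you evaluate the $q$-reformulated continuity equation at the spatial minimum of $\rho$, bound $\psi$ from below on the compact torus to keep the coercive nonlocal term (rather than dropping it), and close with a logistic-type barrier inequality $\dot\rho_m\geq -(|q_0|_\infty+\psi_m)\rho_m^2+\mathcal{M}\psi_m\rho_m$, exactly as in \cite{tad2} and Lemma \ref{boundro}. The only differences are cosmetic — your explicit discussion of the Dini-derivative issue for $\min_x\rho$ (which the paper leaves implicit) and a slightly less sharp, but equally valid, constant $C_0$.
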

\begin{proof}
Using the definition of $q$ we find that \eqref{cont} is equivalent to
\begin{align*}
\rho_t + u\rho_x = -q\rho^2 + \rho\int_\T\psi(|x-y|)(\rho(y)-\rho(\cdot))\dd y,
\end{align*}
which evaluated at $\rho_m(t):=\min_{x\in\T}\rho(t,x)$ leads to
\begin{align*}
\ddt\rho_m = -q\rho_m^2 + \underbrace{\rho_m\int_\T\psi(|x-y|)(\rho(y)-\rho_m)\dd y}_{\geq 0}\geq -q\rho_m^2 + \rho_m\int_\T\psi_m(\rho(y)-\rho_m)\dd y\\
\geq -(|q_0|_\infty+\psi_m)\rho_m^2 + {\mathcal M}\psi_m\rho_m,
\end{align*}
where $\psi_m:=\psi(1)=\inf_{x,y\in\T}\psi(|x-y|)$ and ${\mathcal M}:=\rho(\T)$. In the above inequality we replaced $q$ with $|q_0|_\infty$ by virtue of the fact that $q$ is transported so it retains the values of its extrema. Then we conclude that $\rho_m\geq \min\{\rho_m(0),\frac{{\mathcal M}\psi_m}{|q_0|_\infty+\psi_m}\}$.
\end{proof}

The control over the lower bound of $\rho$ provides the opportunity to prove further results. As explained at the beginning of this section the crucial application of the lower bound is in ensuring that the ellipticity granted by the right-hand side of \eqref{mom} or \eqref{vel} does not disappear. This ellipticity is used to obtain a variety of results, which we summarize in the following theorems.

\begin{theo}[\cite{kis}]
For $\gamma\in(0,1)$, the fractional Euler alignment system \eqref{macro} with periodic smooth initial data $(u_0,\rho_0)$, such that $\rho_0(x)>0$ for all $x\in \T$, has a unique global smooth solution.
\end{theo}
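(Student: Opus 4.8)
The plan is to establish global well-posedness by a standard continuation argument: local existence of smooth solutions, followed by a priori bounds that prevent blow-up in finite time. Local existence for the fractional Euler alignment system \eqref{macro} with smooth periodic data and $\rho_0>0$ follows from the parabolic-type smoothing of the alignment operator on the right-hand side of \eqref{vel}, which for $\rho$ bounded below acts like a fractional Laplacian $\Lambda^\gamma$ of order $\gamma\in(0,1)$; one sets up the problem in a high-order Sobolev space $H^s(\T)$ with $s$ large and runs a fixed-point / energy argument, using that $\rho$ stays positive on a short time interval by continuity. The point is that on $[0,T_*)$ we obtain a smooth solution with $\rho(t,\cdot)>0$, and the continuation criterion is that $\|u(t)\|_{H^s}$ together with $\|1/\rho(t)\|_\infty$ (and $\|\rho(t)\|_{H^s}$) remain bounded.

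The first key step is to control the density from below \emph{uniformly in time} — this is exactly Lemma \ref{boundro}. Here I would invoke the conserved quantity $e$ from \eqref{e}: since $\alpha = 1+\gamma$ on $\T^1$, $e$ satisfies the continuity equation \eqref{magic}, so $q=e/\rho$ is transported by $u$ and $\|q(t)\|_\infty = \|q_0\|_\infty$ for all $t$, which requires checking that $q_0 = e_0/\rho_0$ is finite (this uses $\rho_0>0$ and smoothness). Then Lemma \ref{boundro} gives $\rho(t,x)\geq C_0>0$ for all $t\geq0$. One also needs an upper bound on $\rho$: this follows from the maximum principle applied to $\rho_M(t):=\max_x\rho(t,x)$ in the same equation $\rho_t+u\rho_x = -q\rho^2 + \rho\int_\T\psi(|x-y|)(\rho(y)-\rho(\cdot))\,\dd y$, since at the maximum the integral term is $\leq 0$ and, using $q\geq -\|q_0\|_\infty$, we get $\ddt\rho_M \leq \|q_0\|_\infty \rho_M^2$ — which only gives local control — so more care is needed, likely bounding $\rho_M$ via the total mass $\mathcal M$ together with a bound on $\|\rho_x\|_\infty$, or exploiting that the alignment keeps $e$ (hence the spatial oscillation of $\rho$) controlled. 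With $0<C_0\leq \rho(t,x)\leq C_1$ secured, the operator on the right of \eqref{vel} is uniformly elliptic of order $\gamma$.

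The second key step is higher-order regularity: propagate $H^s$ bounds for $u$ and $\rho$ on any finite interval $[0,T]$. For $u$ one performs an energy estimate on $\Lambda^s u$ using \eqref{vel}: the transport term $(u\cdot\nabla)u$ is handled by commutator (Kato–Ponce) estimates, and the alignment term, now comparable to $-c_0\Lambda^\gamma u$ plus lower-order contributions involving $\rho$, provides a good dissipative term $-c_0\|\Lambda^{s+\gamma/2}u\|_{L^2}^2$ that absorbs the nonlinear errors after interpolation, yielding a differential inequality $\ddt\|u\|_{H^s}^2 \leq P(\|u\|_{H^s},\|\rho\|_{H^s})$. For $\rho$ one differentiates \eqref{cont} and estimates $\|\rho\|_{H^s}$ by a linear transport estimate driven by $\|u\|_{H^{s+1}}$, which is controlled using the $u$-dissipation. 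Coupling these gives a closed system of differential inequalities with no finite-time singularity, hence $\|u(t)\|_{H^s}+\|\rho(t)\|_{H^s}$ stays finite on $[0,T]$ for every $T$, and the continuation criterion extends the solution globally. Uniqueness follows by a Gronwall estimate on the difference of two solutions in a lower-order norm, using the same ellipticity and the lower bound on $\rho$.

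The main obstacle is the combined control of $\rho$ from below and of the regularity: the lower bound \eqref{e}–\eqref{magic} is a genuinely one-dimensional miracle (as the text stresses, no multidimensional analogue is known), and it must be established \emph{before} one can even assert the alignment term is elliptic, while the $H^s$ estimates in turn feed back into controlling the transport of $\rho$ and $q$ — so the estimates are deeply coupled and must be closed simultaneously rather than sequentially. I expect the most delicate point to be showing that the error terms in the $u$-energy estimate (which carry factors of $\|\rho\|_{H^s}$ and, through the kernel $\psi(|x-y|)=|x-y|^{-\alpha}$ with $\alpha\in(1,2)$ on $\T^1$, genuine singular-integral contributions) can be absorbed by the fractional dissipation of order only $\gamma/2<1/2$, which is weak; this is where the structural identity \eqref{magic} is used a second time to trade derivatives of $\rho$ for derivatives of $u$.
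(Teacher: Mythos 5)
Your setup (local well-posedness in $H^s$ with a continuation criterion, the quantity $e$ from \eqref{e}--\eqref{magic} and the transported ratio $q=e/\rho$ to keep $\rho$ away from vacuum) matches the actual strategy, and your use of Lemma \ref{boundro} is legitimate (with the historical caveat, noted in the text, that \cite{kis} only had the weaker time-dependent bound $\rho\gtrsim(1+t)^{-1}$, which already suffices). The genuine gap is in your second step, which is precisely the heart of the theorem: you claim that an $H^s$ energy estimate on $u$ and a transport estimate on $\rho$ close into ``a system of differential inequalities with no finite-time singularity.'' That assertion is unsubstantiated and, as written, false as a mechanism: a bound of the form $\ddt\|u\|_{H^s}^2\leq P(\|u\|_{H^s},\|\rho\|_{H^s})$ with $P$ superlinear is perfectly compatible with finite-time blow-up, and for $\gamma\in(0,1)$ the dissipation $\|\Lambda^{s+\gamma/2}u\|_{L^2}^2$ is too weak to absorb the commutator and transport terms, which carry a full derivative of $u$ (and of $\rho$, through the singular kernel), unless one already controls $\|\partial_x u\|_{L^\infty}$ --- which is exactly the Beale--Kato--Majda quantity that the continuation criterion reduces everything to. In other words, your argument assumes at the critical step what has to be proved: a pointwise (Lipschitz or modulus-of-continuity) bound that survives for all time in a regime that is supercritical for naive energy methods.

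The proof in \cite{kis}, as sketched in the text, supplies this missing ingredient by completely different means: after the BKM-type reduction, global regularity is obtained through a nonlocal maximum principle / modulus-of-continuity ``breaking-point'' argument in the spirit of \cite{Kiselev}, combined with a rescaling argument to exploit the scaling of $\Lambda^\gamma$, carried out first for $q\equiv 0$ and then extended to $q\not\equiv 0$. None of this machinery appears in your proposal, and without it (or a substitute such as the Constantin--Vicol nonlinear maximum principle route later used in \cite{tad4}) the $H^s$ scheme does not close. A secondary, smaller gap: your upper bound on $\rho$ is left as ``more care is needed''; in the actual arguments this is handled through the same pointwise/nonlocal maximum principle analysis of the $\rho$ equation (using that $\Lambda^\gamma\rho\geq 0$ at a spatial maximum, quantified on the torus), not through $\|\rho_x\|_\infty$, so this too should be folded into the pointwise framework rather than the energy one.
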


The above result is based on the lower bound on the density granted by the application of the quantity $e$ (however it should be noted that at the time of publication of \cite{kis}, Lemma \ref{boundro} was not known and the authors used a weaker, time dependent bound $\rho\gtrsim (1+t)^{-1}$). This allows to obtain local well-posedness in the form of a Beale-Kato-Majda type existence criterion, which reduces the global well-posedness to the question of regularity. Global regularity is then shown by a rescaling argument together with a modulus of continuity breaking-point method. The proof was first performed in the simplified case of $q\equiv 0$ and then generalized to $q\not\equiv 0$. Methods used in \cite{kis} are based on previous works due to Kiselev such as \cite{Kiselev}.

\begin{theo}[\cite{tad4}]
For $\gamma\in(0,2)$, the fractional Euler alignment system \eqref{macro} with periodic initial data $(u_0,\rho_0)\in H^3\times H^{1+\alpha}$, such that $\rho_0(x)>0$ for all $x\in \T$, has a unique global smooth solution $(u,\rho)\in L^\infty([0,\infty); H^3\times H^{\gamma})$. Moreover the solution converges exponentially fast to a flocking state
\begin{align*}
\bar{\rho}=\rho_\infty(x-t\bar{u})\in H^{\gamma}
\end{align*}
travelling with a finite speed $\bar{u}$ , so that for any $s<\gamma$ there exist $C=C_s$ and $\delta=\delta_s$ with
\begin{align*}
\|u(t)-\bar{u}\|_{H^3} + \|\rho(t)-\bar{\rho}(t)\|_{H^s}\leq Ce^{-\delta t},\qquad t>0,
\end{align*}
where $\bar{u}$ is the initial average velocity of the system.
\end{theo}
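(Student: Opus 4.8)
The plan is to reduce the assertion to a set of uniform-in-time a priori estimates, which are closed using the conservation law \eqref{magic} together with the density lower bound of Lemma \ref{boundro}; the regularizing mechanism throughout is the order-$\gamma$ fractional ellipticity hidden in the right-hand side of \eqref{vel} once $\rho$ is bounded away from zero.

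\textbf{Step 1 (reformulation and local theory).} Since $e=u_x-\Lambda^\gamma\rho$ from \eqref{e} and the mean velocity $\bar u=\int_\T u_0\,\dd x$ is conserved, the velocity is recovered from $u_x=e+\Lambda^\gamma\rho$, so \eqref{macro} is equivalent to the coupled system of continuity equations \eqref{cont}, \eqref{magic} for $(\rho,e)$ driven by this $u$. First I would set up a standard energy/iteration scheme for this system in $H^3\times H^{1+\alpha}$ (recall $1+\alpha=2+\gamma$), obtaining a unique local smooth solution together with a Beale--Kato--Majda-type continuation criterion: the solution persists as long as $\rho$ stays bounded above and below and $\int_0^T\|u_x(t)\|_{L^\infty}\,\dd t<\infty$. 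The $H^{1+\alpha}$ regularity of $\rho$ is then propagated on any finite interval by transport.

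\textbf{Step 2 (control of the density and of low-order quantities).} By \eqref{magic}, $q:=e/\rho$ is transported by $u$, hence $\|q(t)\|_{L^\infty}=\|q_0\|_{L^\infty}$ for all $t$; Lemma \ref{boundro} then gives $\rho(t,x)\ge C_0>0$ uniformly, and together with conservation of mass $\int_\T\rho$ this bounds $\|e(t)\|_{L^\infty}$. The forcing in \eqref{vel} is a $\rho$-weighted average of velocity differences, so the maximum principle yields $\|u(t)\|_{L^\infty}\le\|u_0\|_{L^\infty}$. It remains to bound $\rho$ from above: once a uniform H\"older bound $\|\rho(t)\|_{C^{0,\beta}}$ with $\beta>\gamma$ is in hand, $\|\Lambda^\gamma\rho\|_{L^\infty}$ and hence $\|u_x\|_{L^\infty}=\|e+\Lambda^\gamma\rho\|_{L^\infty}$ are controlled, and evaluating \eqref{cont} at the point where $\rho$ is maximal closes a Gronwall bound $\rho\le C_1$.

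\textbf{Step 3 (propagation of regularity --- the main obstacle).} The hard part is to upgrade the $L^\infty$-type bounds above to a uniform H\"older, then Sobolev, bound on $u$. With $\rho$ pinched between two positive constants the linear part of the forcing in \eqref{vel} behaves like $-c\,\rho(x)\,\Lambda^\gamma u$, a uniformly elliptic fractional dissipation of order $\gamma$, which however is of supercritical type for $\gamma<1$. I would run a nonlinear-maximum-principle / breakthrough-point argument for $u$ --- the modulus-of-continuity method used in \cite{kis} and in the fractional Burgers and SQG settings --- now carefully tracking the variable coefficient $\rho(x)$, to obtain a uniform $C^{1,\beta}$ bound on $u$; this feeds back through $\Lambda^\gamma\rho=u_x-e$ to give the H\"older bound on $\rho$ needed to close Step 2. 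With $\rho$ bounded above and below and $u$ uniformly Lipschitz, the $H^3\times H^{2+\gamma}$ energy estimates close on every finite interval, the coercive term $\gtrsim\|u\|_{H^{3+\gamma/2}}^2$ from the dissipation absorbing the transport commutators via fractional Leibniz estimates; moreover these estimates are uniform in $t$ for $u$, so $u\in L^\infty([0,\infty);H^3)$. Since then $\Lambda^\gamma\rho=u_x-e$ with $u_x\in H^2(\T)$ and $e\in L^\infty(\T)\subset L^2(\T)$ bounded uniformly in $t$, we conclude $\rho\in L^\infty([0,\infty);H^\gamma)$, as claimed.

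\textbf{Step 4 (flocking and exponential decay).} The energy identity for \eqref{macro},
\[
\frac{\dd}{\dd t}\int_\T\rho\,|u-\bar u|^2\,\dd x=-\int_\T\int_\T\psi(|x-y|)\,|u(x)-u(y)|^2\,\rho(x)\rho(y)\,\dd x\,\dd y,
\]
combined with the fact that on the compact torus both $\psi$ and $\rho$ are bounded below, shows by a Poincar\'e inequality (equivalently the spectral gap of $\Lambda^\gamma$) that the right-hand side is $\le-\delta\int_\T\rho|u-\bar u|^2\,\dd x$; hence $\|u(t)-\bar u\|_{L^2}\le Ce^{-\delta t/2}$, and interpolation with the uniform $H^3$ bound upgrades this to $H^3$ with an exponential rate. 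For the density, write \eqref{cont} as transport along the constant field $\bar u$ plus the perturbation $\div\big((u-\bar u)\rho\big)$, whose norm decays exponentially; a Duhamel argument in $H^s$ then shows $\rho(t,\cdot+t\bar u)$ converges to a limit $\rho_\infty\in H^\gamma$, and the loss $s<\gamma$ in the convergence rate $\|\rho(t)-\rho_\infty(\cdot-t\bar u)\|_{H^s}\le C_se^{-\delta_s t}$ comes from interpolating the exponentially small low-norm bound against the uniform $H^\gamma$ bound. This is precisely the asserted flocking state $\bar\rho=\rho_\infty(x-t\bar u)$ together with the stated estimates.
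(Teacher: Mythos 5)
Your overall architecture (the quantity $e$ from \eqref{e}, transport of $q=e/\rho$, the lower bound of Lemma \ref{boundro}, a Beale--Kato--Majda continuation criterion, and elementary energy/spectral-gap arguments for flocking) does match the skeleton the paper attributes to \cite{tad4}. But the proposal has genuine gaps precisely where the theorem is hard. First, Step 3 is a placeholder rather than an argument: you propose to rerun the modulus-of-continuity breakthrough-point scheme of \cite{kis} ``carefully tracking the variable coefficient $\rho(x)$'', but that scheme was built for $\gamma\in(0,1)$ and its extension to the variable-coefficient, full range $\gamma\in(0,2)$ setting is exactly the content of the theorem; the cited proof in \cite{tad4} deliberately takes a different route, controlling higher-order derivatives and their \emph{exponential decay} via the nonlinear maximum principle of \cite{C-V} and the H\"older estimates of \cite{Silv}. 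Related to this, your claim that the $H^3$ energy estimates are ``uniform in $t$'' does not follow from what you have: a Gronwall argument driven by $\int_0^T\|u_x\|_{L^\infty}\,\dd t$ gives bounds growing in time, and uniformity requires the very decay of higher norms you have not established. The same issue invalidates the final step for $u$: interpolating the exponentially decaying $L^2$ quantity against a uniform $H^3$ bound only yields exponential decay in $H^s$ for $s<3$, whereas the theorem asserts decay of the full $H^3$ norm --- again, this is why \cite{tad4} proves exponential decay of the top-order derivatives directly rather than by interpolation.

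There are also concrete slips in Steps 1--2. The mean $\int_\T u\,\dd x$ is \emph{not} conserved (only the momentum $\int_\T \rho u\,\dd x$ is), so recovering $u$ from $u_x=e+\Lambda^\gamma\rho$ requires tracking the evolving mean by an extra ODE, and $\bar u$ in the flocking statement must be the mass-averaged velocity. More seriously, $\|e\|_{L^\infty}$ is \emph{not} controlled by the lower bound on $\rho$ and conservation of mass: since $e=q\rho$ with $\|q\|_{L^\infty}=\|q_0\|_{L^\infty}$, bounding $e$ needs an \emph{upper} bound on $\rho$, which in your scheme is in turn derived from $\|u_x\|_{L^\infty}=\|e+\Lambda^\gamma\rho\|_{L^\infty}$ --- so Step 2 as written is circular. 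It can be repaired (evaluate \eqref{cont} at the maximum of $\rho$, use $\Lambda^\gamma\rho(x_{\max})\ge 0$ together with the transported bound on $q$, and note that on $\T$ one even has $\Lambda^\gamma\rho(x_{\max})\gtrsim \rho_{\max}-\mathcal{M}$), but the repair must be done before, not after, the H\"older/regularity bootstrap, and the uniform-in-time upper bound on $\rho$ deserves its own argument rather than a one-line Gronwall.
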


The proof of the above theorem can be found in \cite{tad4} but in reality, it is a final, rectified version of an effort started in \cite{tad1} and continued through \cite{tad2}. The proof of global well-posedness is based again on a Beale-Kato-Majda type existence criterion, this time however the issue of global regularity is addressed by controlling higher order derivatives of the solution and particularly -- their exponential decay. It results in a proof that is simpler than the proofs found in \cite{kis}, \cite{tad1} and \cite{tad2} (even though it relies on results due to Constantin and Vicol \cite{C-V} and Silvestre \cite{Silv}).

\subsection{Comparison with the PM equation -- theoretical results}\label{sec:PMtheory}
We will now summarize the results on a particular model closely related to system \eqref{macro}.
The starting point of this section is a reformulation of system \eqref{macro} used for example in \cite{kis}. Interestingly enough, the same reformulation was noticed and employed earlier in the context of compressible  fluids equations with density-dependent viscosity coefficients, see for example \cite{BD}. Taking formally $\gamma=2$ in \eqref{fraclap}, we get the following analogue of \eqref{magic} with $e=v_x$:
\eq{\label{vx}
(v_x)_t+(uv_x)_x=0,
}
where
\eq{\label{vur}
v_x=u_x-\Lambda^2\vr=u_x+\vr_{xx}.}
Therefore, for $v(t,x)$ vanishing at $x=\pm\infty$, equation \eqref{vx} leads to a simple transport equation for $v$, and so, the system \eqref{macro} takes the form
 \begin{equation}
\begin{cases}
\begin{aligned}
&\vr_t+(\vr u)_x=0\\
&v_t+uv_x=0.
\end{aligned}
\end{cases}
\label{kiselev}
\end{equation}
 A formal calculation shows that this system is equivalent to the pressureless compressible Navier-Stokes system with density dependent viscosity:
\begin{equation}
\begin{cases}
\begin{aligned}\label{main1}
&\vr_t+(\vr u)_x=0\\
&\lr{\vr u}_t+(\vr u^2)_x-(\vr^2 u_x)_x
=0.
\end{aligned}
\end{cases}
\end{equation}
As we shall see in Theorem \ref{theo1} below, the solution to this system, at least for conveniently chosen initial data, converges in some sense to the solution of the porous medium equation. The correspondence between both systems can be explained using the definition of $v$ \eqref{vur} again. It allows to rewrite system \eqref{kiselev} as
\begin{equation}
\begin{cases}
\begin{aligned}
&\vr_t-\lr{\frac{\vr^2}{2}}_{xx}=-(\vr v)_x\\
& (\vr v)_t+(\vr u v)_x=0.
\end{aligned}
\end{cases}
\label{main_v}
\end{equation}
In particular, the continuity equation  becomes the porous-medium (PM) equation with force, whose potential solves the continuity equation.

This observation was a core  of paper \cite{HaZa}, in which  the authors studied a generalization of  system \eqref{main1} on $\R$ augmented by the pressure term $\ep\lr{\vr^\gamma}_x$ with $\gamma>1$, and the initial condition
\eq{\label{initiald}
(\vr , \vr u)(0,x)=(\vr_{0},m_0)(x).}
They showed that if $(\sqrt{\vr} v)(0,x)=\frac{m_0}{\sqrt{\vr_0}}+\sqrt{\vr_0}\vr_{0x}=0$, then for $\ep\to 0$ the density solving the Navier-Stokes system converges to the solution of the porous medium equation 
\begin{equation}
\begin{cases}
\begin{aligned}
& \tilde{\vr}_t-\lr{\frac{\tilde\vr^2}{2}}_{xx}=0,\\
&\tilde\vr(0,x)=\vr_0(x)
\end{aligned}
\end{cases}
\label{porous}
\end{equation}
emanating from the same initial data. The same technique can be used to prove the existence of solutions to system \eqref{main1}, and to show that when $(\sqrt{\vr} v)(0,x)\to 0$, the same porous medium equation \eqref{porous} is recovered.
\begin{theo}
\label{theo1}
Let the initial data $(\vr_0,m_0)$ satisfy
\begin{equation}
\begin{aligned}
&\vr_0\geq 0, \, \vr_0\in L^1(\R)\cap L^\infty(\R), \quad
(\vr_0)_x^{\frac{3}{2}}\in L^2(\R),\\
&\frac{m_0^2}{\vr_0}\in L^1(\R),\quad \frac{|m_0|^{2+\kappa}}{\vr_0^{1+\kappa}}\in L^1(\R),\ \text{for\ some}\ \kappa>0,
\end{aligned}
\label{2.1}
\end{equation}
 and let
\eq{\left\|\frac{m_0}{\sqrt{\vr_0}}+\sqrt{\vr_0}\vr_{0x}\right\|_{L^2(\R)}\leq \eta.\label{zero_mass}}
1. System (\ref{main1}) with initial data \eqref{initiald} admits a global in time weak solution $(\vr_\eta, \sqrt{\vr_\eta}u_\eta)$, that is:
\begin{itemize}
\item The density $\vr_\eta\geq 0$ a.e., and the following regularity properties hold
$$
\begin{aligned}
&\vr_\eta\in L^\infty(0,T;L^1(\R))\cap {\mathcal C}([0,+\infty), (W^{1,\infty}(\R))^*),\\
&\lr{\vr_\eta^{\frac{3}{2}}}_{x}\in L^\infty(0,T;L^2(\R)),\ \sqrt{\vr_\eta}u_\eta \in L^\infty(0,T;L^2(\R)),
\end{aligned}
$$
where $(W^{1,\infty}(\R))^*$ is the dual space of $W^{1,\infty}(\R)$.
\item For any $t_2\geq t_1\geq 0$ and any $\psi\in C^1( [t_1,t_2]\times \R)$, the continuity equation is satisfied in the following sense:
\begin{equation}
\int_{\R}\vr_\eta\psi(t_2)\,\dx-\int_{\R}\vr_\eta\psi(t_1)\,\dx=
\int^{t_2}_{t_1}\int_{\R}(\vr_\eta\psi_t+\vr_\eta u_\eta\psi_x)\,\dxdt.
\label{2.3}
\end{equation}
Moreover, for  $\vr _\eta v_\eta=\vr_\eta u_\eta{+}\vr_\eta\vr_{\eta x}$, the following equality is satisfied
\begin{equation}
\int_{\R}\vr_\eta\psi(t_2)\,\dx-\int_{\R}\vr_\eta\psi(t_1)\,\dx=
\int^{t_2}_{t_1}\!\!\int_{\R}\lr{\vr_\eta\psi_t+\vr_\eta v_\eta\psi_x-\vr_\eta\vr_{\eta x}\psi_x}\,\dxdt.
\label{new2.3}
\end{equation}
\item For any $\psi\in C^{\infty}_{c}([0,T)\times\R)$ the momentum equation is satisfied in the following sense:
\begin{align}
\int_{\R}m_0\psi(0)\, \dx+\int^T_0\int_{\R}\lr{\sqrt{\vr_\eta}(\sqrt{\vr_\eta}u_\eta)\psi_t+(\sqrt{\vr_\eta}u_\eta)^2\psi_x}\,\dxdt\nonumber\\
-\langle\vr_\eta^2 u_{\eta x},\psi_x\rangle=0,
\label{2.4}
\end{align}
where the diffusion term is defined as follows:
\begin{equation}
\langle\vr_\eta^2 u_{\eta x},\psi_x\rangle=-\int^T_0\!\!\int_{\R}\vr_\eta^{\frac{3}{2}}\sqrt{\vr_\eta}u_\eta\psi_{xx}\, \dxdt-\frac{4}{3}\int^T_0\!\!\int_{\R}\lr{\vr_\eta^{\frac{3}{2}}}_x\sqrt{\vr_\eta} u_\eta\psi_x\, \dxdt.
\label{2.5}
\end{equation}
\end{itemize}

2. For $\eta\to 0$, $\vr_\eta$ converges strongly to $\tilde\vr$ -- the strong solution to the porous medium equation \eqref{porous}
in the following sense: there exists a constant $C>0$ depending on $\vr_0$ such that
\begin{equation}
\|(\tilde\vr-\vr_\eta)(t)\|_{H^{-1}(\R)}\leq C \eta^{\frac{1}{2}}t^{\frac{1}{2}}.
\label{H-1}
\end{equation}
\end{theo}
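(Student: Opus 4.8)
I would prove the two assertions essentially independently. Part~1 (existence) follows the Bresch--Desjardins (BD) machinery for barotropic Navier--Stokes systems with degenerate, density-dependent viscosity $\mu(\vr)=\vr$, adapting the technique of \cite{BD, HaZa}. Part~2 (the porous-medium limit) is a quantitative stability estimate comparing $\vr_\eta$ with the strong porous-medium solution $\tilde\vr$ in the $H^{-1}(\R)$ metric, exploiting that the smallness \eqref{zero_mass} of $\sqrt{\vr_\eta}\,v_\eta$ is (nearly) propagated in time. The reformulations \eqref{kiselev}--\eqref{main_v} of \eqref{main1} are the link between the two parts, while the classical theory of the PM equation supplies $\tilde\vr$ (unique strong solution, $L^1$ and $L^\infty$ norms non-increasing, $\tilde\vr(0)=\vr_0$).

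\textbf{Existence.} I would build $(\vr_\eta,\sqrt{\vr_\eta}u_\eta)$ as the limit of a layered approximation of \eqref{main1}: regularise the momentum balance with an artificial pressure $\ep\vr^\gamma$ ($\gamma>1$) and a Navier-type drag, mollify the transport operators, solve first on a large interval (or on $\T$) and then exhaust $\R$; at the innermost level the system is uniformly parabolic and a Galerkin/fixed-point argument produces smooth solutions. The estimates that survive every limit are exactly those encoded in \eqref{2.1}: conservation of mass $\int_\R\vr_\eta=\int_\R\vr_0=:M$; the energy inequality giving $\sqrt{\vr_\eta}u_\eta\in L^\infty(0,T;L^2)$ and $\vr_\eta u_{\eta x}\in L^2$ in the renormalised sense; the BD entropy, which for $\mu(\vr)=\vr$ controls $(\vr_\eta^{3/2})_x$ in $L^\infty(0,T;L^2)$; and the Mellet--Vasseur bound on $\int_\R\vr_\eta|u_\eta|^{2+\kappa}$ --- which is precisely why \eqref{2.1} requires $|m_0|^{2+\kappa}/\vr_0^{1+\kappa}=\vr_0|u_0|^{2+\kappa}\in L^1$. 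With these bounds, Aubin--Lions-type compactness together with the Mellet--Vasseur stability argument for degenerate viscosity lets one send $\ep\to0$ (and the domain to $\R$), passing to the limit in $\vr u\otimes u$ and in the diffusion term; the limit satisfies \eqref{2.3} and \eqref{2.4}. Finally, \eqref{new2.3} is \eqref{2.3} rewritten through $\vr_\eta v_\eta=\vr_\eta u_\eta+\vr_\eta\vr_{\eta x}$, and \eqref{2.5} is the identity $\vr^2u_x=\partial_x(\vr^{3/2}\sqrt{\vr}u)-\tfrac43(\vr^{3/2})_x\sqrt{\vr}u$ tested against $\psi_x$; both integrations by parts are legitimate since $(\vr_\eta^{3/2})_x\in L^2$ and $\sqrt{\vr_\eta}u_\eta\in L^2$.

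\textbf{The porous-medium limit.} Two ingredients. First, propagation of smallness: from \eqref{main1} one checks that $v_\eta=u_\eta+\vr_{\eta x}$ obeys $\partial_t v_\eta+u_\eta\partial_x v_\eta=0$, hence $\ddt\int_\R\vr_\eta v_\eta^2\,\dx=0$; performed on the smooth approximants and passed to the limit, this yields $\|\sqrt{\vr_\eta}v_\eta(t)\|_{L^2}\le\eta$ for all $t\ge0$, so $\|\vr_\eta v_\eta\|_{L^1}\le\|\sqrt{\vr_\eta}\|_{L^2}\|\sqrt{\vr_\eta}v_\eta\|_{L^2}\le\sqrt{M}\,\eta$. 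Second, the $H^{-1}$ comparison: by \eqref{main_v} we have $\partial_t\vr_\eta-\partial_{xx}(\vr_\eta^2/2)=-\partial_x(\vr_\eta v_\eta)$, while $\tilde\vr$ solves \eqref{porous}, so $w_\eta:=\tilde\vr-\vr_\eta$ has zero mean, $w_\eta(0)=0$, and $\partial_t w_\eta-\partial_{xx}\!\big(\tfrac12(\tilde\vr+\vr_\eta)w_\eta\big)=\partial_x(\vr_\eta v_\eta)$. Testing with $\phi_\eta$, the solution of $-\partial_x^2\phi_\eta=w_\eta$, using $\tilde\vr,\vr_\eta\ge0$ and $\|\partial_x\phi_\eta\|_{L^\infty}\le\|w_\eta\|_{L^1}\le 2M$, we obtain
\begin{align*}
\tfrac12\ddt\|w_\eta\|_{H^{-1}}^2+\int_\R\tfrac12(\tilde\vr+\vr_\eta)\,w_\eta^2\,\dx
&=-\int_\R\partial_x\phi_\eta\cdot\vr_\eta v_\eta\,\dx\\
&\le\|\partial_x\phi_\eta\|_{L^\infty}\|\vr_\eta v_\eta\|_{L^1}\le C(\vr_0)\,\eta.
\end{align*}
Dropping the nonnegative dissipation and integrating from $0$ gives $\|w_\eta(t)\|_{H^{-1}}^2\le C(\vr_0)\,\eta\,t$, i.e. \eqref{H-1}. (As in Part~1, this differential inequality is first derived on the approximants and passed to the limit, or justified via the parabolic smoothing of both equations.)

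\textbf{Main obstacle.} The genuine difficulty is Part~1: since $\mu(\vr)=\vr$ vanishes at the vacuum, the Lions--Feireisl theory does not apply, and one must lean on the BD entropy together with a Mellet--Vasseur-type argument to obtain strong convergence of $\sqrt{\vr_\eta}u_\eta$ and a rigorous meaning for $\vr_\eta^2u_{\eta x}$ near vacuum; propagating the extra $\int\vr|u|^{2+\kappa}$ bound through all approximation layers and retaining it in the final compactness step is the delicate point. Part~2 is, by comparison, structurally soft: the only care needed is the legitimacy of the $H^{-1}$ estimate for the merely weak $\vr_\eta$ and the observation that the forcing $\partial_x(\vr_\eta v_\eta)$ is governed by the conserved quantity $\|\sqrt{\vr_\eta}v_\eta\|_{L^2}$.
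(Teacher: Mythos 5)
Your proposal follows essentially the same route as the paper: existence via an approximate system on a bounded interval with the energy estimate, the Bresch--Desjardins entropy controlling $(\vr^{3/2})_x$, the Mellet--Vasseur $\int\vr|u|^{2+\kappa}$ bound, and compactness as $\ep\to0$, $M\to\infty$; and the porous-medium limit via propagation of $\|\sqrt{\vr_\eta}v_\eta\|_{L^2}\le\eta$ (which the paper extracts from the BD entropy with $\ep=0$) combined with the $H^{-1}$ duality comparison in the spirit of V\'azquez/Haspot--Zatorska. Your explicit $H^{-1}$ computation is just the spelled-out version of the duality argument the paper cites, so the two proofs coincide in substance.
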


Note that in  assumptions of this theorem we only require that $\vr_0\geq 0$, in particular, the initial density can be compactly supported. Then, the natural question is how does the support of the density propagate in time. For the degenerate Navier-Stokes equations the answer to this question is only partial, see for example \cite{YZ, JXZ}. Estimate \eqref{H-1} allows us to compare  weak solutions to the Navier-Stokes system with  strong solutions of the porous medium equation. From the classical theory we know that the interface between fluid and the vacuum for the latter moves with  the finite speed. For the special class of self-similar solutions, the so-called Barenblatt solutions, one can even give exact formula for the velocity of this motion, see \cite{Va}. 

{\it Idea of the proof.}
The proof of existence of weak solutions starts from the approximate  Navier-Stokes system augmented by the artificial viscosity term:
\begin{equation}
\begin{cases}
\begin{aligned}
& \vr_{\ep t}+(\vr_\ep u_\ep)_x=0\\
&\lr{\vr_\ep u_\ep}_t+(\vr_\ep u_\ep^2)_x- (\vr_\ep^2+\ep\vr_\ep^\theta u_{\ep x})_x
=0\\
&\vr_\ep(0,x)=\vr_{\ep,0}(x),\ \vr_\ep(0,x)u_\ep(0,x)=m_{\ep,0}(x),
\end{aligned}
\end{cases}
\label{mainpp}
\end{equation}
where $\ep>0$, $\theta\in (0,1/2)$ are constant. Further, we cut the domain into a bounded interval $\Omega=[-M,M]$ for $M$ large and we supplement system \eqref{mainpp} with the boundary conditions
$u_\ep|_{\partial\Omega}=0.$
We assume that $\vr_{\ep,0}>C(\ep)>0$, and that $\vr_{\ep,0}$, $m_{\ep,0}$ converge to $\vr_0$, $m_0$ in the following sense
\begin{equation}
\begin{aligned}
&\vr_{\ep,0}\to\vr_0, \quad\text{strongly in } L^1(\Omega), \\
&\lr{\vr_{\ep,0}^{\frac{3}{2}}}_x\to\lr{ \vr_0^{\frac{3}{2}}}_x\quad\text{strongly in } L^2(\Omega)\\
&\frac{m_{\ep,0}^2}{\vr_{\ep,0}}\to\frac{m_0^2}{\vr_0}\quad\text{strongly in } L^1(\Omega),\\
&\frac{|m_{\ep,0}|^{2+\kappa}}{\vr_{\ep,0}^{1+\kappa}}\to\frac{|m_0|^{2+\kappa}}{\vr_0^{1+\kappa}}\quad\text{strongly in } L^1(\Omega).
\end{aligned}
\label{new2.1}
\end{equation} The existence of the classical solutions for $\ep, M$ being fixed can be obtained as in
the works of   Q. Jiu, Z. Xin  \cite{Jiu}, and of  H.-L. Li, J.~Li, and Z.~Xin  \cite{LLX} that treat the full Navier-Stokes system including the pressure term.
The most important here are the lower and upper estimates of the density that however do not depend on the presence of the pressure. The way to obtain them is to rewrite system \eqref{mainpp} in the mass Lagrangian coordinates 
\begin{equation}\label{Lag_ch}
y= \int_{-M}^x{\vr(\tau,s)ds}, \qquad \tau= t.
\end{equation}
Since the total mass is given and bounded we have 
\begin{equation}\label{init2}
\int_{-M}^M\vr(t,s){\rm{d}}s=L<\infty,
\end{equation}
 and so $y\in[0,L]=\Omega_L$.  Using \eqref{Lag_ch}, system \eqref{main1} may be transformed into the following one
\begin{equation}\label{main_Lag}
\left\{
\begin{array}{l}
\vr_\tau + \vr^2 u_y=0\\
u_\tau- ((\vr ^3+\ep\vr^{1+\theta})u_y)_y  =0\\
\vr_\ep(0,y)=\vr_{\ep,0}(y),\ \vr_\ep(0,y)u_\ep(0,y)=m_{\ep,0}(y),
\end{array}
\right.
\end{equation}
where the boundary conditions are now equal to
$u|_{\partial\Omega_L}=0.$
Then, for regular solutions $\vr_\ep, u_\ep$ of \eqref{main_Lag} s.t. $\vr_\ep>0$, we have the following a-priori estimates:
 \begin{itemize}
 \item[i)] the energy estimate in the Lagrangian coordinates
 \begin{equation}
\begin{aligned}
\intOL{ \frac{u_\ep^2}{2}(T)}+\intTOL{\vr_\ep(\vr_\ep^2+\ep\vr_\ep^\theta)(u_{\ep\eta})^2}\leq \intOL{\frac{u_\ep^2}{2}(0)};
\end{aligned}
\label{energyL}
\end{equation}
\item[ii)] the entropy estimate in the Lagrangian coordinates
 \begin{equation}
\begin{aligned}
&\intOL{\lr{u_\ep^2+(\vr^2_\ep)^2_\eta+\ep^2(\vr_\ep^\theta)^2_\eta}(T)}+\intTOL{\vr_\ep(\vr_\ep^2+\ep\vr_\ep^\theta)(u_{\ep\eta})^2}\\
&\hspace{4cm}\leq \intOL{\lr{u_\ep^2+(\vr^2_\ep)^2_\eta+\ep^2(\vr_\ep^\theta)^2_\eta}(0)};
\end{aligned}
\label{entropyL}
\end{equation}
 \end{itemize}
As a consequence of these estimates we can show the following lemma.
\begin{lem}
Let $\vr_\ep, u_\ep$ be a regular solution of \eqref{main_Lag}, satisfying \eqref{energyL} and \eqref{entropyL}. Then there exists constants $C$ and $C(\ep)$ such that
\eq{
0<C(\ep)\leq\vr_\ep\leq C.
\label{ud}
}
\end{lem}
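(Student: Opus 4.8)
The plan is to prove the two inequalities in \eqref{ud} by rather different arguments. The upper bound $\vr_\ep\le C$ comes essentially for free from the entropy estimate \eqref{entropyL} combined with conservation of mass, and it is even uniform in $\ep$ (and in $M$ and $\tau$); the lower bound $\vr_\ep\ge C(\ep)>0$ has to be extracted from the parabolic structure hidden in \eqref{main_Lag} and is the only genuinely delicate point.

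First I would fix, for each time $\tau$, a convenient reference point. Because of the Lagrangian change of variables \eqref{Lag_ch}, passing back to the Eulerian variable gives $\int_0^L \vr_\ep^{-1}(\tau,y)\,\dd y=\int_{-M}^M \dd x=2M$, whereas $\int_0^L \dd y=L$ by \eqref{init2}; since $\vr_\ep(\tau,\cdot)$ is continuous, the intermediate value theorem produces a point $y_0(\tau)$ with $\vr_\ep(\tau,y_0(\tau))=L/(2M)$, a value bounded above and below independently of $\ep$ and $M$ thanks to \eqref{new2.1}. Now \eqref{entropyL} provides $\|(\vr_\ep^2)_y(\tau)\|_{L^2(\Omega_L)}\le C$ uniformly in $\tau$ and $\ep$, so writing $\vr_\ep^2(\tau,y)=\vr_\ep^2(\tau,y_0(\tau))+\int_{y_0(\tau)}^{y}(\vr_\ep^2)_y\,\dd z$ and using the Cauchy--Schwarz inequality on an interval of length at most $L$ yields $\vr_\ep^2(\tau,y)\le (L/2M)^2+\sqrt{L}\,\|(\vr_\ep^2)_y(\tau)\|_{L^2(\Omega_L)}\le C$, which is precisely the right-hand inequality in \eqref{ud}.

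For the lower bound I would first derive the ``effective flux'' identity. From the first line of \eqref{main_Lag} one has $u_{\ep y}=(1/\vr_\ep)_\tau$, hence $(\vr_\ep^3+\ep\vr_\ep^{1+\theta})u_{\ep y}=-\partial_\tau\Phi_\ep(\vr_\ep)$ with $\Phi_\ep(r):=\tfrac{r^2}{2}+\tfrac{\ep}{\theta}r^{\theta}$; substituting this into the second line of \eqref{main_Lag} and integrating in $\tau$ gives $u_\ep(\tau,y)+\partial_y\Phi_\ep(\vr_\ep)(\tau,y)=G_0(y)$, where $G_0:=u_{\ep,0}+\partial_y\Phi_\ep(\vr_{\ep,0})$ depends only on the initial data and, for the smooth and strictly positive approximate datum, satisfies $\|G_0\|_{W^{1,\infty}(\Omega_L)}\le C(\ep)$. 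Eliminating $u_\ep$ from $\vr_{\ep\tau}=-\vr_\ep^2 u_{\ep y}$ then yields the scalar equation $\vr_{\ep\tau}=-\vr_\ep^2 G_{0,y}+\vr_\ep^2[\Phi_\ep(\vr_\ep)]_{yy}$, which is (degenerate) parabolic because $\Phi_\ep'(r)=r+\ep r^{\theta-1}>0$. Setting $\vr_{\ep,m}(\tau):=\min_y\vr_\ep(\tau,y)$, attained at some $y_*(\tau)$ with $\vr_{\ep y}=0$ and $\vr_{\ep yy}\ge0$, the minimum principle gives $\vr_{\ep,m}'(\tau)\ge-\|G_{0,y}\|_{L^\infty}\,\vr_{\ep,m}^2$ for a.e.\ $\tau$, and Gronwall's inequality yields $\vr_{\ep,m}(\tau)\ge\vr_{\ep,m}(0)\bigl(1+\|G_{0,y}\|_{L^\infty}\vr_{\ep,m}(0)\,\tau\bigr)^{-1}\ge C(\ep)>0$ on $[0,T]$, since $\vr_{\ep,m}(0)=\min\vr_{\ep,0}>0$ for fixed $\ep$. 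The minimum possibly sitting on $\partial\Omega_L$ is a minor technicality, which I would sidestep by taking the approximate datum with $\vr_{\ep,0,y}=0$ on $\partial\Omega_L$, since the flux identity then forces $\vr_{\ep y}=0$ there for all $\tau$ and the one-sided version of the argument goes through.

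I expect the lower bound to be the main obstacle. The static a priori bounds \eqref{energyL}--\eqref{entropyL} only control $\vr_\ep^2$ in $H^1(\Omega_L)$, that is, up to continuity, which does not prevent $\vr_\ep$ from reaching $0$; one is forced to use the time evolution, and then to live with the degeneration of the diffusion coefficient $\vr_\ep^3+\ep\vr_\ep^{1+\theta}$ as $\vr_\ep\to0$. This is exactly what the artificial viscosity $\ep\vr_\ep^{\theta}$ with $\theta\in(0,1/2)$ is for, and it is the reason the constant $C(\ep)$ must be allowed to degenerate as $\ep\to0$ (and as $T\to\infty$); the $\ep$-uniform control lost here is recovered only afterwards, in the passage to the limit $\ep\to0$ in the approximate system \eqref{mainpp}.
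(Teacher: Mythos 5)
Your argument is correct, but only the upper bound follows the route the paper has in mind; the lower bound is obtained by a genuinely different mechanism. For the upper bound you argue exactly as in the reference the paper cites (\cite{Jiu}, Lemma 3.3, to which the paper delegates the whole proof): mass conservation gives a point $y_0(\tau)$ with $\vr_\ep(\tau,y_0(\tau))=L/(2M)$, and the $L^\infty_\tau L^2_y$ bound on $(\vr_\ep^2)_y$ from \eqref{entropyL} is integrated from that point. (Minor slip: $L/(2M)$ is \emph{not} bounded below uniformly in $M$, but only its upper bound enters, so nothing is lost.) For the lower bound, the cited argument is static and uses nothing beyond \eqref{energyL}--\eqref{entropyL}, which is why the lemma is phrased with exactly these hypotheses: the artificial-viscosity part $\ep^2((\vr_\ep^\theta)_y)^2$ of the entropy is, back in Eulerian variables, precisely the control of $\ep(\vr_\ep^{\theta-\frac12})_x$ in $L^\infty(0,T;L^2)$ recorded later in \eqref{BDrho}; since $\theta-\frac12<0$, integrating this \emph{negative} power from a point where $\vr_\ep\geq L/(2M)$ gives an upper bound on $\vr_\ep^{\theta-\frac12}$, hence $\vr_\ep\geq C(\ep)$ at every time, with a constant independent of $T$. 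You instead use the pointwise effective-velocity identity $u_\ep+\partial_y\Phi_\ep(\vr_\ep)=G_0$ with $\Phi_\ep(r)=\tfrac{r^2}{2}+\tfrac{\ep}{\theta}r^\theta$ (the pointwise form of the same BD structure that produces \eqref{entropyL}), reduce the continuity equation to a quasilinear parabolic equation for $\vr_\ep$, and run a minimum principle; your computations are right, including the sign discussion at the minimum and the treatment of a possible boundary minimum via Neumann-compatible approximate data. What this buys and costs: your proof needs more than the two stated estimates --- $G_{0,y}\in L^\infty$, i.e. $C^1$-type control of the approximate initial data --- and the resulting bound degenerates algebraically as $T\to\infty$, whereas the static negative-power argument uses only the a priori estimates and is uniform in time; both are perfectly sufficient here, since $C(\ep)$ may depend on the approximation and $T$ is finite.
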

For the proof of this Lemma we refer, for example, to \cite{Jiu}, Lemma 3.3. Having these estimates at hand,  proving the existence of regular solutions to \eqref{main_Lag} is a classical result,
see for example {\cite{Alessandra}} Chap 7.

The existence of smooth solutions to \eqref{main_Lag} allows to come back to the system written in the Eulerian coordinates \eqref{mainpp} for $\ep,M$ being fixed, and to translate the estimates from above to the following ones
 \begin{itemize}
 \item[i')~] the energy estimate in the Eulerian coordinates
 \begin{equation}
\begin{aligned}
&\intOM{\frac{\vr_\ep u_\ep^2}{2}(T)}+\intTOM{(\vr_\ep^2+\ep\vr_\ep^\theta) u_{\ep x}^2}\leq \intOM{\lr{\vr_\ep \frac{u_\ep^2}{2}+\frac{\ep}{\gamma-1}\vr_\ep^{\gamma}}(0)};
\end{aligned}
\label{energieuni2}
\end{equation}
\item[ii')~] \ the entropy estimate in the Eulerian coordinates
 \begin{equation}
\begin{aligned}
&\intOM{\vr_\ep \frac{\lr{u_\ep+(1+\ep\vr_\ep^{\theta-2})\vr_{\ep x}}^2}{2}(T)}\leq \intOM{\vr_\ep \frac{\lr{u_\ep+(1+\ep\vr_\ep^{\theta-2})\vr_{\ep x}}^2}{2}(0)}.
\end{aligned}
\label{energieuni}
\end{equation}
 \end{itemize}
 In addition to that, as in the work of  A. Mellet and A. Vasseur  in \cite{MV07}, 
 one can improve the uniform estimates of the velocity vector field
\eq{\label{testMV}
\intOM{\frac{\vr_\ep|u_\ep|^{2+\kappa}}{2+\kappa}(T)}+(\kappa+1)\intTOM{(\vr_\ep^2+\ep\vr_\ep^\theta)|u_\ep|^\kappa| u_{\ep x}|^2}
\leq\nonumber\\
\leq \intOM{\frac{\vr_\ep|u_\ep|^{2+\kappa}}{2+\kappa}(0)},}
for some $\kappa>0$.
These estimates lead to the following bounds
\eq{
&\|\sqrt{\vr_\ep}u_\ep\|_{L^\infty(0,T; L^2(\Omega))}+\|(\vr_\ep^{\frac{3}{2}})_x\|_{L^\infty(0,T;L^2(\Omega))}
+\ep\| (\vr_\ep^{\theta-\frac{1}{2}})_x\|_{L^\infty(0,T;L^2(\Omega))}\\
&+\|\vr_\ep|u_\ep|^{2+\kappa}\|_{L^\infty(0,T; L^1(\Omega))}+
+\|(\vr_\ep+\sqrt{\ep}\vr_\ep^{\theta/2})u_{\ep x}\|_{L^2(0,T; L^2(\Omega))}\leq C.
\label{BDrho}}
Moreover, translating \eqref{ud} into the Eulerian coordinates, uniformly in $\ep$ we have
\begin{equation}
\|\vr_\ep\|_{L^\infty(0,T; L^\infty(\Omega))}\leq C.
\label{technorm}
\end{equation}
%
With these estimates compactness arguments from \cite{MV07}, \cite{LLX} yield convergence as $\ep\to 0$ of the approximate solution $(\vr_\ep, u_\ep)$ to the weak solution $(\vr, u)$ specified in Theorem \ref{theo1}, on the domain $\Omega=[-M,M]$ with the no-slip boundary condition for $u$. In order to let $M\to \infty$, one can combine the diagonal procedure with the convergence of the initial data $(\vr_{\ep,0},m_{\ep,0})\to (\vr_0,m_0)$ as it was done in \cite{Jiu}.

\medskip
The proof of the second part of Theorem \ref{theo1} corresponds to the pressureless limit studied in \cite{HaZa}. We recall the main steps. Recalling the notation $v_\eta=u_\eta+\vr_{\eta x}$, the first part of Theorem \ref{theo1} provides that the continuity equation
\eq{
\vr_{\eta t}-\frac{1}{2}\lr{\vr^2_\eta}_{xx}+(\vr_\eta v_\eta)_x=0,
\label{5}
}
it is satisfied in the sense of distributions.
From \eqref{zero_mass} and \eqref{energieuni} with $\ep=0$ it also follows that
\begin{equation}
\sup_{t\in[0,T]}\|\sqrt{\vr_\eta}v_\eta(t)\|_{L^2(\R)}\leq\eta.
\label{enerate}
\end{equation}
Therefore, when $\eta\to0$, we expect to show that the last term in equation \eqref{5} converges in the "$H^{-1}$" sense to the strong solution of the corresponding porous-medium equation  \eqref{porous} with the same initial data $\vr_0$. 
Rigorous proof of this fact is based to the duality technique  in the spirit of J. L. V\'azquez (see \cite{Va} Section 6.2.1), that was used in \cite{HaZa} for the pressureless limit. 

The convergence  $\vr_\eta\to \tilde\vr$ as stated in \eqref{H-1} is relatively weak. Note that in \cite{HaZa} the convergence of weak solutions to Navier-Stokes  system with viscosity coefficient of the form $\vr^\alpha$ could have been significantly improved for $1<\alpha\leq\frac{3}{2}$. This was possible thanks to uniform boundedness of $\vr_{\eta x}$ in $L^\infty(0,T; L^2(\R))$.  In this case one can  estimate the mass corresponding to $\vr_\eta$ inside the support of  certain Barenblatt profile.  This provides a partial information about evolution of the interface between the medium and  vacuum. For further discussion on that matter we refer the reader to \cite{HaZa}, Section 5,  and to \cite{PAM} for relevant results in the multi-dimensional case.

\subsection{Comparison with the PM equation -- numerical illustration}

In this Section we present the results of numerical simulations for the pressureless Navier-Stokes system with density dependent viscosity \eqref{main1} and the porous medium equation \eqref{porous}.  We aim to illustrate analytical developments of Section \ref{sec:PMtheory}  and demonstrate the evolution of $\vr, u,$ and $\tilde\vr$ with respect to various initial conditions. For the sake of numerical simulations we will always assume that $\vr_0>0$, then the initial velocity $u_0$ may be extracted from $m_0$, moreover
$$v_0= v(0,x)=\frac{m_0}{\vr_0}+\vr_{0x}.$$  
We define parameter $c$ as a ratio
\begin{equation}\label{c}
 \frac{m_0}{\vr_0}=c\,\vr_{0x}.
\end{equation}
Note that for $c=-1$, $v_0=0$ and thus we expect that both approximate solutions to \eqref{main_v} and \eqref{porous} coincide.

Instead of the whole $\R$, the computational domain is approximated by sufficiently large interval $\Omega = [-20,20]$, and we employ zero Neumann boundary condition on $\partial\Omega$, namely $ u_x\cdot \boldmath{n} = 0$.
As an initial density profile we choose smooth functions with compact support;
two initial conditions are taken into account:
$$
\vr^1_0 = \frac{0.2}{1+x^2},\quad
\vr^2_0 = \frac{0.2}{1+(x-10)^2} + \frac{0.2}{1+(x+10)^2},$$
that are refereed to as Case 1 and Case 2, respectively.  As a consequence of \eqref{c} the initial velocity reads:
$$u^1_0 = -c\frac{0.4x}{(1+x^2)^2},\quad u^2_0 = -c \frac{0.4(x-10)}{(1+(x-10)^2)^2} - c\frac{0.4(x+10)}{(1+(x+10)^2)^2}. $$

For the spatial discretization, standard finite element method is employed where the discrete space for the density is one order higher then for the velocity, namely $(\rho_h,u_h)\in \mathbb{P}^3(\Omega)\times\mathbb{P}^2(\Omega)$, where $\mathbb{P}^k(\Omega)$ denotes continuous Lagrange element of order $k$. In time we use implicit time-stepping, with time step $\Delta t = \Delta x = 0.01$.  The nonlinear problem is solved by means of the Newton method with the Jacobian computed by automatic differentiation. Implementation is based on the Finite Element library FEniCS. 

To illustrate the dependence of the solutions to \eqref{main1} and \eqref{porous} on the initial value of $v_0$, the constant $c$ has been chosen from a set 
\eq{\label{cValue}
c \in \{-0.1,-0.5. -0.9, -1.0 ,-1.1,-1.5,-1.9 \}.}
This corresponds to 
$$v_0=0.9\vr_{0x}, 0.5\vr_{0x}, 0.1\vr_{0x}, 0, -0.1\vr_{0x},-0.5\vr_{0x},-0.9v_0.$$

Figures \ref{fig:case1} and \ref{fig:case2} demonstrate time development of the initial profiles, $\vr^1_0$ and $\vr^2_0$, respectively. For each of the values of parameter $c$ we depict the profile of the density $\vr$, the velocity $u$, and the solution to the porous medium equation $\tilde\vr$ at times $t=0,\ 200,\ 400$. Note that at time $t=0$, the graphs of $\vr$ and $\tilde\vr$ coincide.
In addition to that, in Figures \ref{fig:rho1} and \ref{fig:rho2} we compare density profiles at time $t=200$ for various values of the parameter $c$. As predicted by the theoretical considerations from the previous section, for $c\to-1$, which corresponds to $v_0\to0$, the profile of the density $\vr$ resembles more and more the profile of $\tilde\vr$, at least up to some time \eqref{H-1}. For $c=-1$, i.e. for $v_0=0$, the profiles of $\vr$ and $\tilde\vr$ coincide. What is also interesting, but not captured by the current analytical theory, is the behaviour of the solution for $c\not\to -1$. On one hand, we have
 creation of high concentration around the origin for $c>-1$ (see the top rows of Figures \ref{fig:case1}, \ref{fig:case2}, and the left parts of Figures \ref{fig:rho1} and \ref{fig:rho2}). On the other, there is a separation of the initial mass into two groups travelling in the opposite directions for $c<-1$ (see the bottom rows of Figures \ref{fig:case1}, \ref{fig:case2}, and the right-hand parts of Figures \ref{fig:rho1} and \ref{fig:rho2}). In the first scenario ($c>-1$), it seems that depending on the initial data, the final state may consist of more than one concentration picks (see the top row of Figure \ref{fig:case2} and the left part of Figure \ref{fig:rho2}). While for the second scenario ($c<-1$), after initial division, the parts of mass colliding around the origin accumulate and create steep profile (see bottom row of Figure \ref{fig:case2} and the right part of Figure \ref{fig:rho2}).

\begin{figure}[h!]
	\begin{center}
		\includegraphics[width=1.\textwidth]{./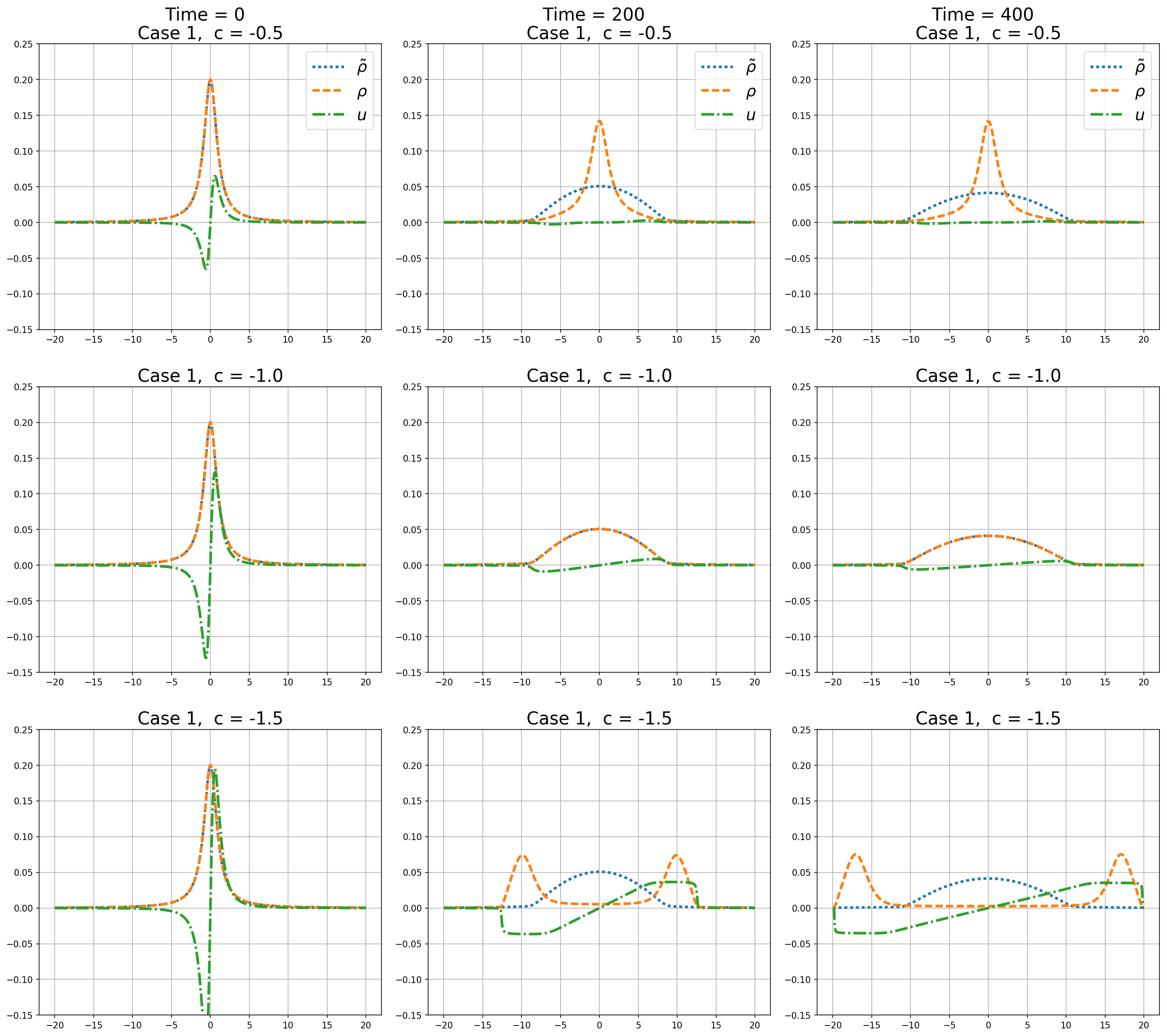}
	\end{center}	
	\caption{Case:1 Evolution of $\vr$, $u$ solving \eqref{main1} and of $\tilde\vr$ solving \eqref{porous}, for $\vr_0=\vr_0^1$ and $u_0=u_0^1$. Comparison of the profiles for $t=0$, $t=200$, and $t=400$, for different values of parameter $c$ as specified by \eqref{cValue}.}
	\label{fig:case1}
\end{figure}

\begin{figure}[h!]
	\begin{center}
		\includegraphics[width=1.\textwidth]{./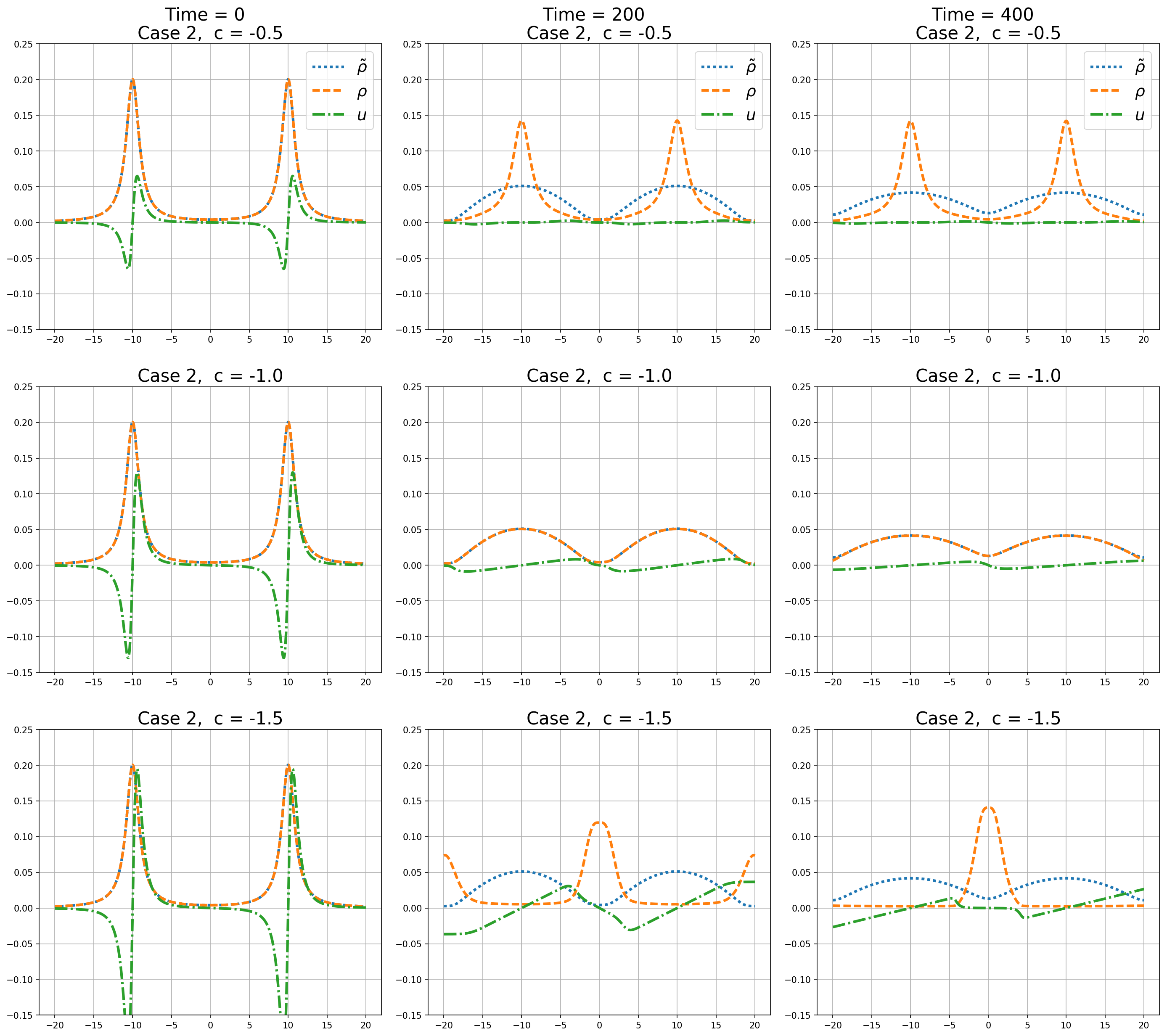}
	\end{center}	
\caption{Case 2: Evolution of $\vr$, $u$ solving \eqref{main1} and of $\tilde\vr$ solving \eqref{porous}, for $\vr_0=\vr_0^2$ and $u_0=u_0^2$. Comparison of the profiles for $t=0$, $t=200$, and $t=400$, for different values of parameter $c$ as specified by \eqref{cValue}.}
	\label{fig:case2}
\end{figure}

\begin{figure}[h!]
	\begin{center}
		\includegraphics[width=.75\textwidth]{./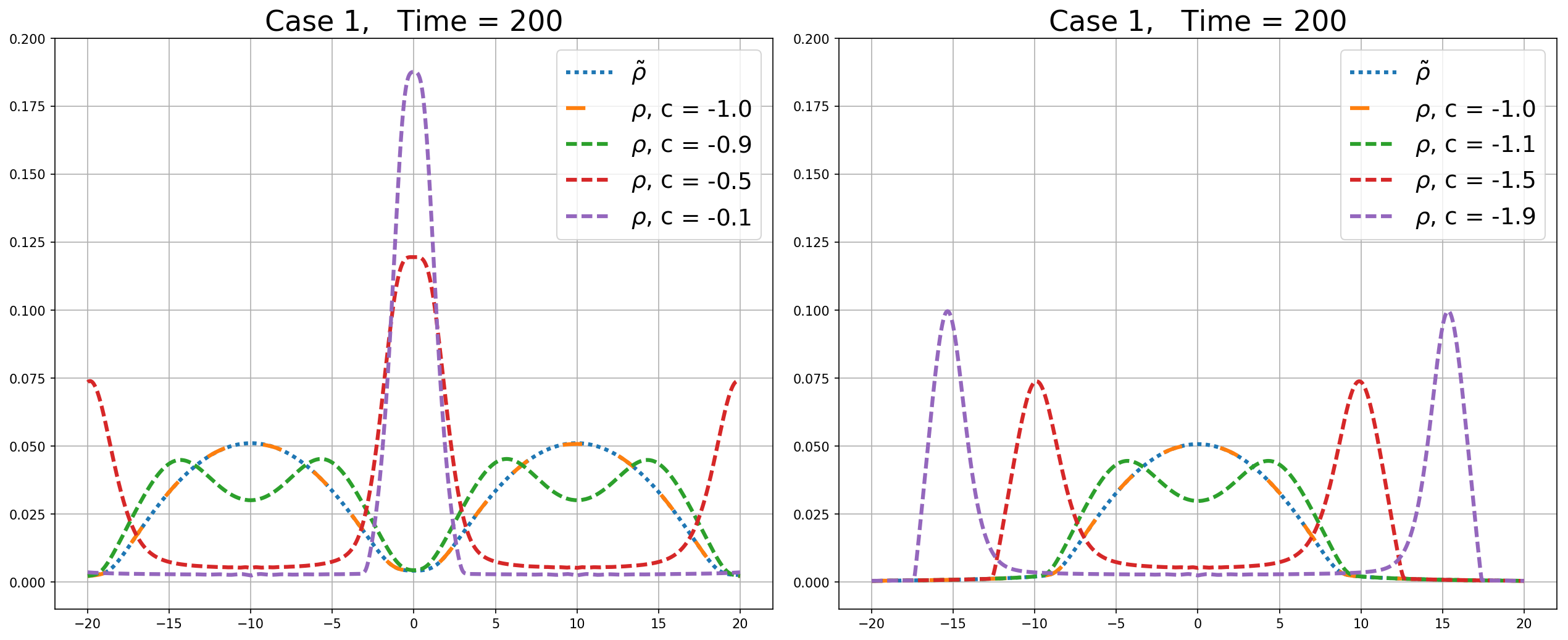}
	\end{center}	
	\caption{Comparison of the density profiles for Case 1: $\vr_0=\vr_0^1$, $u_0=u_0^1$, at $t=200$.}
	\label{fig:rho1}
\end{figure}

\begin{figure}[h!]
	\begin{center}
		\includegraphics[width=.75\textwidth]{./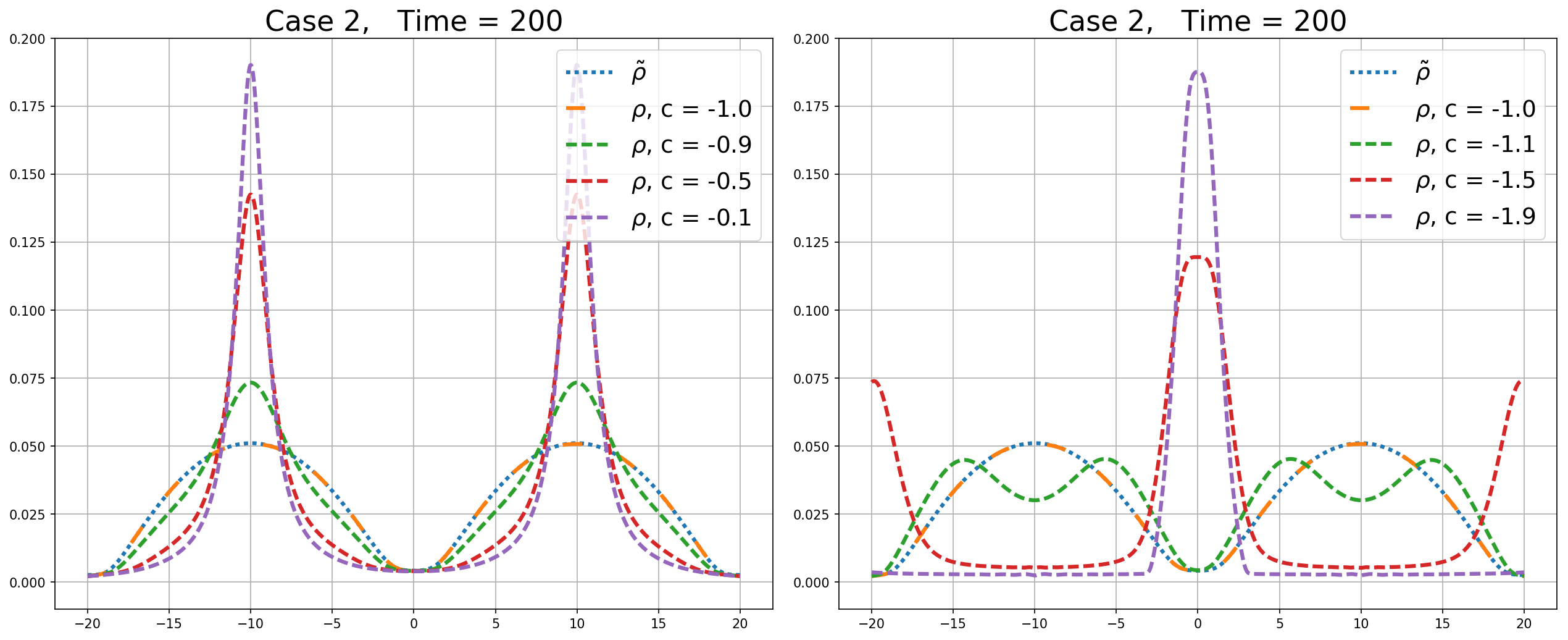}
	\end{center}	
	\caption{Comparison of the density profiles for Case 2: $\vr_0=\vr_0^2$, $u_0=u_0^2$, at $t=200$.}
	\label{fig:rho2}
\end{figure}

\subsection{Strong theory in $\T^d$ with small data}\label{shvyd}
Generalization of the results by Do {\it et. al.} and Shvydkoy and Tadmor, presented in Section \ref{Storus}, to higher dimensions remains open. It is unclear what object should replace quantity $e$ from \eqref{magic} and the intuitively natural candidate
\begin{align*}
e = \div_x u -\Lambda^\gamma\rho(t,x)
\end{align*}
does not satisfy the continuity equation like in the case of $d=1$. However it still satisfies equation
\begin{align}\label{ed}
e_t + \div_x (ue) = (\div_x u)^2 - {\rm Tr}(\nabla u)^2,
\end{align}
which can be used to derive an estimate on $e$. Such approach was applied by Shvydkoy in \cite{Shv} leading to the following result.

\begin{theo}[\cite{Shv}]
Let $\gamma\in(0,2)$. There exists an $N\in{\mathbb N}$ such that for any sufficiently large $R>0$ any initial condition $(u_0,\rho_0)\in H^m(\T^d)\times H^{m-1+\gamma}(\T^d)$, $m\geq d +4$, satisfying
\begin{align}\label{shvas}
|\rho_0|_\infty, |\rho_0^{-1}|_\infty, [u_0]_{\dot{W}^{3,\infty}}, [\rho_0]_{\dot{W}^{3,\infty}}\leq R,\qquad \sup_{x,y\in\T^d}|u_0(x)-u_0(y)|\leq \frac{1}{R^N},
\end{align}
gives rise to a unique global solution in class $C([0,\infty):H^m(\T^d)\times H^{m-1+\gamma})$. Moreover, the solution converges to a flocking state exponentially fast. Here $\dot{W}^{3,\infty}$ is the homogeneous Sobolev space of functions with third weak derivative belonging to $L^\infty$ and $[\cdot]_{\dot{W}^{3,\infty}}$ is its seminorm.  
\end{theo}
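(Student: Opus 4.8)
{\it Idea of the proof.} The plan is a continuity (bootstrap) argument: combine a local well-posedness theory with a continuation criterion, and then show that the smallness and regularity conditions in \eqref{shvas} propagate, with room to spare, for all $t\ge 0$. First I would record local well-posedness in $C([0,T);H^m(\T^d)\times H^{m-1+\gamma}(\T^d))$ together with a Beale--Kato--Majda-type criterion of the form: the solution persists as long as $\inf\rho>0$, $\sup\rho<\infty$ and $\int_0^T\big(\|\nabla u\|_{L^\infty}+[\rho]_{\dot{C}^{\gamma+\delta}}\big)\,dt<\infty$ for some $\delta>0$. The transport parts are controlled by Kato--Ponce commutator estimates; the alignment term, rewritten as in \eqref{vel}--\eqref{fraclap} as a weighted fractional Laplacian $\tfrac1c\rho(x)\Lambda^\gamma u+(\text{lower order})$, contributes in the energy identity a coercive term $\gtrsim(\inf\rho)\,\|\Lambda^{\gamma/2}\partial^\beta u\|_{L^2}^2$ whenever $\rho>0$ --- this is the fractional ellipticity the text flags as essential. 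Thus everything reduces to uniform-in-time bounds on $\|\rho\|_{L^\infty}$, $\|\rho^{-1}\|_{L^\infty}$ and on $[u(t)]_{\dot{W}^{3,\infty}}$, $[\rho(t)]_{\dot{W}^{3,\infty}}$ (the $H^m\times H^{m-1+\gamma}$ regularity then persists automatically via the continuation criterion).

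Next, the flocking mechanism. Setting $\mathcal{A}(t):=\sup_{x,y\in\T^d}|u(t,x)-u(t,y)|$, a component-wise maximum principle for the momentum equation \eqref{mom} --- evaluate $\partial_t u^k$ at a point where $u^k$ is maximal, resp.\ minimal, so that the transport term drops out and the alignment integral is signed and bounded by $\psi_m\mathcal{M}(\min u^k-\max u^k)$, with $\mathcal{M}:=\int_{\T^d}\rho$ the conserved mass and $\psi_m:=\min_{x,y\in\T^d}\psi(|x-y|)>0$ --- gives $\tfrac{d}{dt}\mathcal{A}\le-\psi_m\mathcal{M}\,\mathcal{A}$, hence $\mathcal{A}(t)\le\mathcal{A}(0)e^{-\psi_m\mathcal{M}t}\le R^{-N}e^{-\psi_m\mathcal{M}t}$; since $\mathcal{M}\ge|\T^d|\inf\rho_0\ge|\T^d|/R$, the decay is genuinely exponential (rate of size $\sim R^{-1}$). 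Combining with the interpolation inequality $\|\nabla u\|_{L^\infty}\lesssim(\operatorname{osc} u)^{2/3}[u]_{\dot{W}^{3,\infty}}^{1/3}$ and the running bootstrap bound on $[u(t)]_{\dot{W}^{3,\infty}}$ makes $\|\nabla u(t)\|_{L^\infty}$ small and, crucially, time-integrable: $\int_0^\infty\|\nabla u\|_{L^\infty}\,dt$ is a negative power of $R$ once $N$ is large enough. Feeding this into the continuity equation \eqref{cont} read along characteristics, $\tfrac{d}{dt}\log\rho(t,X(t))=-\div u(t,X(t))$, keeps $\rho(t,\cdot)$ within a factor arbitrarily close to $1$ of $\rho_0$ for all time, hence $\tfrac1{2R}\le\rho\le 2R$ --- strictly inside the bootstrap region. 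Here the quantity $e=\div u-\Lambda^\gamma\rho$ of \eqref{e} plays the complementary role: by \eqref{ed} its evolution has right-hand side $O(\|\nabla u\|_{L^\infty}^2)$, small and integrable, so $e$ --- and with it $\Lambda^\gamma\rho$, since $\div u$ is small --- stays bounded uniformly in $t$, which is exactly what controls $[\rho]_{\dot{C}^{\gamma+\delta}}$ in the continuation criterion.

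It remains to close the bootstrap for the top-order seminorms, which I would run entirely in $L^\infty$/Hölder-based norms (precisely the ones the hypothesis controls). For the velocity, $v=\partial^\beta u$ with $|\beta|\le 3$ satisfies $\partial_t v+u\cdot\nabla v+\tfrac1c\rho\,\Lambda^\gamma v=(\text{l.o.t.})$, and one invokes the nonlinear maximum principle / Schauder theory for drift--fractional-diffusion equations of Constantin--Vicol \cite{C-V} and Silvestre \cite{Silv} (as in \cite{tad4}, \cite{Shv}): at a point where $|v|$ is extremal the diffusion contributes a strictly dissipative term which beats the forcing, because the forcing consists of transport commutators --- all carrying at least one factor of $\nabla u$, hence small and time-integrable by the previous step --- and lower-order terms in $\rho$ that are tamed by the already established bound on $\Lambda^\gamma\rho$; moreover the dissipation even drives $[u(t)]_{\dot{W}^{3,\infty}}$ to decay, so its time integral is finite. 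For the density, $\partial^\beta\rho$ is transported by $u$ up to commutators bounded by $[u]_{\dot{W}^{1,\infty}}[\rho]_{\dot{W}^{3,\infty}}+[u]_{\dot{W}^{3,\infty}}[\rho]_{\dot{W}^{1,\infty}}$ (plus $\div u$-terms), with $[u]_{\dot{W}^{1,\infty}}$ small and integrable and $[\rho]_{\dot{W}^{1,\infty}}$ bounded by a power of $R$; a Grönwall argument fed by the velocity estimate keeps $[\rho(t)]_{\dot{W}^{3,\infty}}$ bounded by a (larger, but still fixed) power of $R$. Once all bootstrap bounds close with strict inequality, the solution is global by the continuation criterion; uniqueness follows from a standard $L^2$ stability estimate for the difference of two solutions using the a priori bounds in hand. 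Exponential flocking is then immediate: $\|u(t)-\bar u\|_{L^\infty}\le\mathcal{A}(t)\lesssim R^{-N}e^{-\psi_m\mathcal{M}t}$ for the conserved velocity $\bar u=\mathcal{M}^{-1}\int_{\T^d}\rho u$, and inserting this into the continuity equation shows $\rho(t,\cdot+t\bar u)$ is exponentially Cauchy, converging to a profile $\rho_\infty$, so $(u,\rho)\to(\bar u,\rho_\infty(\cdot-t\bar u))$ exponentially fast.

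The main obstacle is the top-order estimate of the third paragraph. The fractional dissipation that must absorb the derivatives of $\rho$ has strength $\sim\inf\rho\sim R^{-1}$, which degenerates as $R\to\infty$, while the oscillation of $\rho$ is itself of order $R$; closing the estimate therefore forces the velocity amplitude --- and, by interpolation, $\|\nabla u\|_{L^\infty}$ together with all transport commutators --- to be polynomially small in $R$. Choosing an admissible exponent $N$, and organising the interpolation/Hölder bookkeeping (with the various bootstrap constants being different powers of $R$) so that every accumulated power of $R$ is ultimately defeated by $\|u_0-\bar u\|_{L^\infty}\le R^{-N}$, is the technical core of \cite{Shv} and is precisely why \eqref{shvas} demands smallness of order $R^{-N}$ rather than merely ``small''. \hfill$\square$
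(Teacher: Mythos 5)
Your outline matches the paper's (and Shvydkoy's) strategy in all essentials: a bootstrap built on a Beale--Kato--Majda continuation criterion driven by $|\nabla u|_\infty$, the quantity $e$ with its evolution \eqref{ed}, the smallness \eqref{shvas} used to close the top-order Constantin--Vicol/Silvestre estimates (which, like the paper's own sketch, you cite rather than carry out), and an elementary maximum-principle argument for exponential flocking. The only organizational difference is that you derive the upper and lower bounds on $\rho$ from the characteristics of the continuity equation together with the time-integrability of $\|\nabla u\|_{L^\infty}$, reserving $e$ for the control of $\Lambda^\gamma\rho$, whereas the paper (following \cite{Shv}) presents the separation of $\rho$ from zero as the direct outcome of estimating $e$ via \eqref{ed}; this is a difference of bookkeeping within the same method, not a different route.
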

Conditions \eqref{shvas} should be viewed as assumptions on smallness of initial data in terms of initial deviation of the velocity $u$ from the average. On top of that it is assumed that $\rho_0$ does not have singularities and is separated from $0$.

{\it Idea of the proof.}
The proof is performed in the spirit of \cite{tad4}. Local existence, yet again, is reduced to the control over $|\nabla u|_\infty$ by a Beale-Kato-Majda type criterion. To prolong the existence a global estimate is required and to get it, similarly to the $d=1$ case, the author makes sure that $\rho$ is separated from $0$. It is achieved using quantity $e$ and thus the proof is based on its estimation. Such estimation is performed using equation \eqref{ed}, which enables to bound $e$ in terms of $|\nabla u|_\infty$ and itself. To close the estimate smallness condition \eqref{shvas} is needed. Once existence in a sufficiently high class of regularity is established, flocking follows using elementary arguments based on the structure of the CS model (the argumentation is in essence the same as in the simplest case of the CS particle system with regular communication weight).\hfill$\square$

\subsection{Strong theory in $\R^d$ with small data}
A mostly different approach is employed in \cite{dmpw} to deal with the case of $\Omega=\R^d$ with $d\geq 2$. The idea is to use the classical theory of fractional elliptic hydrodynamics. The starting point is the observation that the right-hand side of equation \eqref{vel} is, up to a constant, the fractional Laplace operator of $u$. To pursue this idea we assume, similarly to the $\T^d$ case in Section \ref{shvyd}, that $\rho_0$ and $u_0$ only slightly deviate from constants. Then smallness of the velocity is preserved in time and ensures that the density $\rho$ also stays close to a constant for all times. To give a proper mathematical description of this approach let us rewrite the velocity equation \eqref{vel} obtaining
\begin{align}\label{burg}
\partial_t u + (u\cdot\nabla)u + c_\alpha (-\Delta)^\frac{\gamma}{2}u = {\mathcal B},
\end{align}
where the reminder is defined as
\eqh{
& {\mathcal B}:= (1-\rho)(-\Delta)^\frac{\gamma}{2}u + {\mathcal I},\\
& {\mathcal I}:=\int_{\R^d}\frac{u(x+h)-u(x)}{|x|^{d+\gamma}}(\rho(x+h)-\rho(x))dh.
}
Then the system \eqref{cont} + \eqref{burg} can be viewed as a compressible fractional Burgers system with a (hopefully manageable) right-hand side. Here also the effect of taking $\rho$ close to a constant is apparent since then both terms that constitute ${\mathcal B}$ become small. Based on the works on fractional Burgers equation, such as \cite{M-W} by Miao and Wu, it is reasonable to expect that the range of admissible $\gamma$ would be $(0,2)$. However for the sake of simplicity and accessibility we restrict the range of admissible singularity $\gamma$ to $(1,2)$ which allows us to simplify the system even further by transferring the convection to the right-hand side:
\begin{align}\label{heat}
\partial_t u  + c_\alpha (-\Delta)^\frac{\gamma}{2}u = {\mathcal R},
\end{align}
with
\begin{align*}
{\mathcal R}:= (1-\rho)(-\Delta)^\frac{\gamma}{2}u + {\mathcal I} -(u\cdot\nabla)u.
\end{align*}
Such presentation of the velocity equation provides an opportunity to  consider the fractional Euler alignment system as a compressible fractional heat equation, whose theory boast a wide range of tools to chose from. We chose to use the Besov framework. Thus the fractional Euler alignment system is reduced to a well understood problem of compressible heat equation in the language of Besov spaces, with most of the difficulty moved to the external force ${\mathcal R}$ on the right-hand side. Such approach produced the following theorem.
\begin{theo}\label{main-intro}\cite{dmpw}
Assume that  $\gamma \in (1,2)$ and consider initial data $(\rho_0,u_0)$ so that $u_0$ and $\nabla u_0$ are  in 
$\dot B^{2-\gamma}_{d,1},$ and $\rho_0 -1$ and $\nabla\rho_0$ are  in $\dot B^{1}_{d,1}.$ 
 There exists $\varepsilon>0$ such that  if  in addition 
\begin{equation}\label{eq:smallness}
\|u_0\|_{\dot B^{2-\gamma}_{d,1}} +  \|\rho_0-1 \|_{\dot B^{1}_{d,1}}  <\varepsilon,
\end{equation}
then the fractional Euler system $\eqref{cont} + \eqref{mom}$
has a unique global solution $(\rho,u)$ such that
$$u,\nabla u \in {\mathcal C}_b(\R_+; \dot{B}^{2-\gamma}_{d,1}) \cap L^1(\R_+;\dot B^{2}_{d,1})
\andf(\rho -1),\nabla\rho \in {\mathcal C}_b(\R_+; \dot B^{1}_{d,1}).$$
In the case where the smallness condition is fulfilled only by $\rho_0$, there exist 
a unique solution $(\rho,u)$ on some time interval $[0,T]$ with $T>0$ so that
$$u,\nabla u \in {\mathcal C}_b([0,T]; \dot{B}^{2-\gamma}_{d,1}) \cap L^1([0,T];\dot B^{2}_{d,1})
\andf(\rho -1),\nabla\rho \in {\mathcal C}_b([0,T]; \dot B^{1}_{d,1}).$$
\end{theo}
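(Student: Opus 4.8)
\textit{Idea of the proof.}
The plan is to view the coupled system \eqref{cont}+\eqref{burg} as a perturbation of the linear fractional heat equation $\partial_t u+c_\alpha(-\Delta)^{\gamma/2}u=0$ together with the linear continuity equation, and to solve it by a fixed-point argument in a critical Besov framework (the rigorous implementation being uniform a~priori estimates on Friedrichs-mollified solutions, compactness, and a stability estimate). Since the relevant spaces are homogeneous and $2-\gamma\in(0,1)$ while $1-\gamma<0$, one cannot let $\nabla u$ or $\nabla\rho$ drop to a negative regularity index; accordingly I would track $(u,\nabla u)$ as an unknown in $\widetilde{\mathcal C}_b(\R_+;\dot B^{2-\gamma}_{d,1})\cap L^1(\R_+;\dot B^2_{d,1})$ and $(\rho-1,\nabla\rho)$ as an unknown in $\widetilde{\mathcal C}_b(\R_+;\dot B^1_{d,1})$, working with Chemin--Lerner time-Besov norms. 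The iteration is the natural one: given $(\rho,u)$, let the new velocity solve $\partial_t u+c_\alpha(-\Delta)^{\gamma/2}u={\mathcal R}$ with datum $u_0$, and let the new density solve the continuity equation transported by this velocity with datum $\rho_0$; one then shows this map contracts a small ball.

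The first linear ingredient is maximal regularity for the fractional heat operator, which for $f\in L^1_T(\dot B^{2-\gamma}_{d,1})$ gives
\begin{align*}
\|u\|_{\widetilde L^\infty_T(\dot B^{2-\gamma}_{d,1})}+\|u\|_{L^1_T(\dot B^2_{d,1})}\lesssim\|u_0\|_{\dot B^{2-\gamma}_{d,1}}+\|f\|_{L^1_T(\dot B^{2-\gamma}_{d,1})},
\end{align*}
the gain of $\gamma$ derivatives integrated in time being exactly what $(-\Delta)^{\gamma/2}$ provides; the same estimate at the level of $\nabla u$ controls the remaining part of the norm. The second ingredient is a Danchin-type transport estimate for the continuity equation rewritten as $\partial_t\rho+u\cdot\nabla\rho=-\div u-(\rho-1)\div u$, which controls $\|\rho-1\|_{\widetilde L^\infty_T(\dot B^1_{d,1})}$ in terms of $\|\rho_0-1\|_{\dot B^1_{d,1}}$, the forcing, and $\exp(C\|\nabla u\|_{L^1_T(\dot B^1_{d,1})})$; the quantity $\|\nabla u\|_{L^1_T(\dot B^1_{d,1})}$ is supplied by the parabolic step through $\|u\|_{L^1_T(\dot B^2_{d,1})}$, and $\div u$ together with $(\rho-1)\div u$ are handled using that $\dot B^1_{d,1}$ is a multiplier algebra acting on $\dot B^s_{d,1}$ for $|s|\le1$. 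It is here that the hypothesis $\gamma>1$ is convenient: the fractional smoothing then lands $u$ in $L^1_T(\dot B^2_{d,1})$ starting only from data in $\dot B^{2-\gamma}_{d,1}\subset\dot B^1_{d,1}$, so the two estimates match.

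It then remains to bound ${\mathcal R}=(1-\rho)(-\Delta)^{\gamma/2}u+{\mathcal I}-(u\cdot\nabla)u$ in $L^1_T(\dot B^{2-\gamma}_{d,1})$, and similarly at the $\nabla$-level. The convection term is standard via product rules in critical Besov spaces. For $(1-\rho)(-\Delta)^{\gamma/2}u$, using the multiplier property above and $2-\gamma\in(0,1)$, one gets a bound by $\|\rho-1\|_{\widetilde L^\infty_T(\dot B^1_{d,1})}\|u\|_{L^1_T(\dot B^2_{d,1})}$, which is quadratically small. The delicate term is the nonlocal bilinear commutator ${\mathcal I}$: applying Bony's paraproduct decomposition to the two difference quotients $u(x+h)-u(x)$ and $\rho(x+h)-\rho(x)$ and integrating the singular kernel $|h|^{-d-\gamma}$, one distributes the $\gamma$ orders of derivative loss between the two factors so that each lands in a space already under control, arriving at a bound of the form $\|{\mathcal I}\|_{L^1_T(\dot B^{2-\gamma}_{d,1})}\lesssim\|\nabla\rho\|_{\widetilde L^\infty_T(\dot B^1_{d,1})}\|u\|_{L^1_T(\dot B^2_{d,1})}$, again small. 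Putting these together, the solution map contracts a small ball once $\varepsilon$ in \eqref{eq:smallness} is small enough, giving the global solution; uniqueness follows from the analogous estimates on the difference of two solutions. For the final assertion, when only $\rho_0$ is small, one runs the same scheme on a short interval $[0,T]$, now absorbing the not-yet-small terms by a power of $T$, to obtain a unique local solution.

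The main obstacle is the nonlocal bilinear term ${\mathcal I}$. Unlike the other terms, which are close analogues of what appears in the critical-regularity theory of compressible Navier--Stokes and of fractional Burgers equations (hence the reference to \cite{M-W}), ${\mathcal I}$ requires a genuinely new bilinear estimate: the $\gamma$ orders of derivative loss must be split so that neither factor exceeds the regularity one controls a priori, and the estimate must retain a smallness factor coming from $\rho-1$ or $\nabla\rho$ so that the contraction closes. A secondary difficulty is the bookkeeping forced by the homogeneous spaces --- carrying $\nabla u$ and $\nabla\rho$ as separate unknowns --- and matching the $L^1$-in-time integrability produced by the fractional parabolic smoothing with that required by the transport estimate for $\rho$, which is exactly where the restriction $\gamma\in(1,2)$ is used.
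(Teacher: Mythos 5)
Your overall architecture is exactly the paper's: rewrite the velocity equation as the fractional heat equation \eqref{heat} with remainder ${\mathcal R}$ (using $\gamma>1$ to push the convection to the right-hand side), run a standard iterative/fixed-point scheme in homogeneous critical Besov spaces with parabolic maximal-regularity estimates for $(-\Delta)^{\gamma/2}$ and a transport estimate for the density, absorb $(1-\rho)(-\Delta)^{\gamma/2}u$ thanks to the smallness of $\rho-1$, and isolate ${\mathcal I}$ as the crux. However, at precisely the point the paper identifies as the main difficulty, your key estimate is not the one that closes the argument. You claim
\begin{equation*}
\|{\mathcal I}\|_{L^1_T(\dot B^{2-\gamma}_{d,1})}\lesssim\|\nabla\rho\|_{\widetilde L^\infty_T(\dot B^1_{d,1})}\,\|u\|_{L^1_T(\dot B^2_{d,1})},
\end{equation*}
and you assert that the needed smallness may come ``from $\rho-1$ or $\nabla\rho$.'' But the hypothesis \eqref{eq:smallness} makes only $\|u_0\|_{\dot B^{2-\gamma}_{d,1}}$ and $\|\rho_0-1\|_{\dot B^1_{d,1}}$ small; $\nabla\rho_0$ (and $\nabla u_0$) are merely finite. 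Your bound, which in effect spends the regularity $\rho-1\in\dot B^2_{d,1}$ (one derivative more than necessary), therefore injects the large ``gradient-tier'' quantity $\|\nabla\rho\|_{\dot B^1_{d,1}}$ into the base-level estimate for $u$: writing $X$ for $\|u\|_{\widetilde L^\infty_T(\dot B^{2-\gamma}_{d,1})}+\|u\|_{L^1_T(\dot B^2_{d,1})}$, the parabolic estimate gives $X\le C\|u_0\|_{\dot B^{2-\gamma}_{d,1}}+CMX+\dots$ with $M\sim\|\nabla\rho\|_{\widetilde L^\infty_T(\dot B^1_{d,1})}$, and this cannot be closed by absorption when $CM\ge1$, no matter how small $\varepsilon$ is. So your scheme, as written, yields the global statement only under the extra assumption that $\nabla\rho_0$ is small (or a local-in-time statement), not the theorem as stated.

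What is actually proved in \cite{dmpw}, and quoted in the text, is
\begin{equation*}
\|{\mathcal I}\|_{\dot B^{2-\gamma}_{d,1}}\le C\,\|\nabla u\|_{\dot B^1_{d,1}}\,\|\rho-1\|_{\dot B^1_{d,1}},
\end{equation*}
i.e.\ the $\gamma$ orders lost in the kernel are distributed so that the extra derivative falls on $u$ (where $\|\nabla u\|_{\dot B^1_{d,1}}\sim\|u\|_{\dot B^2_{d,1}}$ is exactly what the fractional smoothing makes integrable in time), while the factor retained is the genuinely small quantity $\|\rho-1\|_{\dot B^1_{d,1}}$ propagated by the transport equation. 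With this placement the term is quadratically small and absorbs, the base-tier estimates for $(u,\rho-1)$ close with a universal $\varepsilon$, and the gradient-tier estimates for $(\nabla u,\nabla\rho)$ then close because they are linear in the large norms with small coefficients. To repair your argument you need to redo the Bony analysis of ${\mathcal I}$ so that no more than one derivative of $\rho$ (measured in $\dot B^1_{d,1}$ for $\rho-1$ itself) is consumed at the base level; the rest of your proposal (transport estimate with the exponential of $\|\nabla u\|_{L^1_T(\dot B^1_{d,1})}$, treatment of $(1-\rho)(-\Delta)^{\gamma/2}u$ and of the convection term, uniqueness by stability estimates, and the local statement when only $\rho_0$ is small) matches the paper's scheme.
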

The proof of the above theorem follows by a standard iterative scheme with application of Besov techniques in fractional heat equation. The fractional laplacian in ${\mathcal R}$ is dealt with by the fractional laplacian on the left-hand side thanks to the smallness of $\rho-1$. The main difficulty lies in the control of ${\mathcal I}$, which is shown to satisfy the inequality
\[ \| {\mathcal I} \|_{\dot B^{2- \gamma}_{d,1}} \leq C \|\nabla u\|_{ \dot B^{1}_{d,1}} \|\rho-1\|_{\dot B^{1}_{d,1}} \ . \]  

Finally information provided by Theorem \ref{main-intro} leads to the following corollary regarding the asymptotic behavior of the solutions.
\begin{coro}
 Let $(\rho,u)$ be a global in time solution given by Theorem \ref{main-intro}. Then 
 $$
 \|u(t)\|_{L^\infty} \to 0 \mbox{ \ \ as \ \ } t \to \infty.
 $$
\end{coro}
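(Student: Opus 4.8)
\textit{Proof proposal.} The plan is to extract decay of $\|u(t)\|_{L^\infty}$ from the two pieces of information supplied by Theorem~\ref{main-intro}: the \emph{uniform-in-time} bounds $u,\nabla u\in \mathcal C_b(\mathbb R_+;\dot B^{2-\gamma}_{d,1})$ (so that $u\in L^\infty(\mathbb R_+;\dot B^{2-\gamma}_{d,1}\cap\dot B^{3-\gamma}_{d,1})$) and the \emph{time-integrable} bound $u\in L^1(\mathbb R_+;\dot B^2_{d,1})$. The first reduction is the critical embedding $\dot B^1_{d,1}(\mathbb R^d)\hookrightarrow L^\infty(\mathbb R^d)$: since $2-\gamma<1<2$ for $\gamma\in(1,2)$, block-wise interpolation gives, for a.e.\ $t$,
\begin{align*}
\|u(t)\|_{L^\infty}\lesssim \|u(t)\|_{\dot B^1_{d,1}}\lesssim \|u(t)\|_{\dot B^{2-\gamma}_{d,1}}^{1-\theta}\,\|u(t)\|_{\dot B^{2}_{d,1}}^{\theta},\qquad \theta=\frac{\gamma-1}{\gamma}\in\Big(0,\tfrac12\Big).
\end{align*}
Hence it suffices to prove that $g(t):=\|u(t)\|_{\dot B^1_{d,1}}$ tends to $0$ as $t\to\infty$.

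Two observations then close the argument. First, raising the interpolation inequality to the power $1/\theta=\gamma/(\gamma-1)$ and integrating in time, the uniform bound on $\|u\|_{\dot B^{2-\gamma}_{d,1}}$ combined with $u\in L^1(\mathbb R_+;\dot B^2_{d,1})$ yields $g\in L^{\gamma/(\gamma-1)}(\mathbb R_+)$; in particular $g$ is $p$-integrable on the half-line for some finite $p$. Second, I would show that $t\mapsto u(t)$ has a modulus of continuity into $\dot B^1_{d,1}$ that is uniform in $t$. A function on $[0,\infty)$ that is uniformly continuous and lies in $L^p(\mathbb R_+)$ with $p<\infty$ necessarily vanishes at $+\infty$ (otherwise one would find infinitely many disjoint unit-height bumps of fixed width, contradicting $p$-integrability), so these two facts give $g(t)\to0$ and hence the corollary.

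For the uniform continuity I would use the mild formulation of the velocity equation~\eqref{heat}: writing $G(s)=e^{-c_\alpha s(-\Delta)^{\gamma/2}}$, for $h>0$ and $t\ge0$,
\begin{align*}
u(t+h)-u(t)=\big(G(h)-\mathrm{Id}\big)u(t)+\int_t^{t+h}G(t+h-\tau)\,{\mathcal R}(\tau)\,\mathrm d\tau .
\end{align*}
Using $1-e^{-c2^{j\gamma}h}\le (c2^{j\gamma}h)^{(2-\gamma)/\gamma}$ on each dyadic block together with the uniform bound on $\nabla u$, the first term is controlled by $\|(G(h)-\mathrm{Id})u(t)\|_{\dot B^1_{d,1}}\lesssim h^{(2-\gamma)/\gamma}\sup_\tau\|u(\tau)\|_{\dot B^{3-\gamma}_{d,1}}$. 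Using the fractional-heat smoothing $\|G(s)f\|_{\dot B^1_{d,1}}\lesssim s^{-(1-\sigma)/\gamma}\|f\|_{\dot B^\sigma_{d,1}}$ with $\sigma:=3-2\gamma\in(1-\gamma,1)$ and integrating, the Duhamel term is controlled by $h^{\,1-2(\gamma-1)/\gamma}\sup_\tau\|{\mathcal R}(\tau)\|_{\dot B^{3-2\gamma}_{d,1}}$ (the exponent $1-2(\gamma-1)/\gamma$ is positive precisely because $\gamma<2$). Both bounds are independent of $t$ and vanish as $h\to0$, provided ${\mathcal R}=(1-\rho)(-\Delta)^{\gamma/2}u+{\mathcal I}-(u\cdot\nabla)u$ is bounded, uniformly in time, in $\dot B^{3-2\gamma}_{d,1}$: for $(-\Delta)^{\gamma/2}u$ this is $u\in L^\infty(\mathbb R_+;\dot B^{3-\gamma}_{d,1})$; for $(1-\rho)(-\Delta)^{\gamma/2}u$ it follows from $\dot B^1_{d,1}$ being a multiplier algebra acting on $\dot B^{3-2\gamma}_{d,1}$ (valid for $d\ge2$ since $3-2\gamma>1-d$) and the uniform bound on $\rho-1$; and for ${\mathcal I}$ it follows from the bilinear estimate quoted after Theorem~\ref{main-intro} together with the uniform bounds on $\nabla u$ and $\rho-1$ in the relevant critical spaces. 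The term $(u\cdot\nabla)u$ is harmless since $u$ is uniformly bounded in $L^\infty\cap\dot B^{3-\gamma}_{d,1}$ and $\nabla u$ in $\dot B^{2-\gamma}_{d,1}$.

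I expect the genuine obstacle to be exactly this last point: securing a uniform-in-time bound for the forcing ${\mathcal R}$ in a Besov space of low (indeed possibly negative) regularity $\sigma>1-\gamma$, and pairing it with the correct smoothing power of the fractional heat semigroup so that the resulting modulus of continuity of $u(\cdot)$ in $\dot B^1_{d,1}$ is genuinely independent of $t$. Everything else — the interpolation, the passage from time-integrability of $g$ to $g(t)\to0$, and the final embedding into $L^\infty$ — is routine. An essentially equivalent alternative would skip the uniform-continuity step and run a Duhamel plus dominated-convergence argument directly on $\|u(t)\|_{\dot B^1_{d,1}}$, splitting $\int_0^t=\int_0^{t/2}+\int_{t/2}^t$ and exploiting that $e^{-c2^{j\gamma}(t-\tau)}\to0$ for each dyadic block; this variant instead asks for ${\mathcal R}\in L^1(\mathbb R_+;\dot B^{\sigma'}_{d,1})$ for a suitable $\sigma'$, which is a comparable amount of bookkeeping.
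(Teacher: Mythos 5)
Your overall architecture is reasonable, and for what it is worth the survey itself offers no proof of this corollary at all (it is quoted from \cite{dmpw} with only the remark that exponential decay is out of reach on $\R^d$), so there is no in-paper argument to compare against. The first three steps are fine: the critical embedding $\dot B^1_{d,1}(\R^d)\hookrightarrow L^\infty$, the interpolation $\|u\|_{\dot B^1_{d,1}}\lesssim\|u\|_{\dot B^{2-\gamma}_{d,1}}^{1-\theta}\|u\|_{\dot B^{2}_{d,1}}^{\theta}$ with $\theta=(\gamma-1)/\gamma$, the resulting $\|u(\cdot)\|_{\dot B^1_{d,1}}\in L^{\gamma/(\gamma-1)}(\R_+)$, and the Barbalat-type principle that time-integrability plus uniform continuity forces decay. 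The semigroup computations (the $h^{(2-\gamma)/\gamma}$ bound for $(G(h)-\mathrm{Id})u(t)$ via $\|u\|_{\dot B^{3-\gamma}_{d,1}}\simeq\|\nabla u\|_{\dot B^{2-\gamma}_{d,1}}$, and the smoothing exponent $2(\gamma-1)/\gamma<1$ for the Duhamel term) are also consistent.

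The genuine gap is exactly where you place the weight of the argument: the claimed uniform-in-time bound of ${\mathcal R}$, and specifically of ${\mathcal I}$, in $\dot B^{3-2\gamma}_{d,1}$ is not justified by anything you invoke. Two separate problems. First, the estimate quoted after Theorem \ref{main-intro} controls $\|{\mathcal I}\|_{\dot B^{2-\gamma}_{d,1}}$, and in the homogeneous scale with fixed integrability index $d$ there is no embedding of $\dot B^{2-\gamma}_{d,1}$ into the lower-regularity space $\dot B^{3-2\gamma}_{d,1}$ (note $3-2\gamma<2-\gamma$ since $\gamma>1$), so even a uniform $\dot B^{2-\gamma}_{d,1}$ bound would not feed your smoothing estimate. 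Second, and more importantly, that quoted estimate involves $\|\nabla u(\tau)\|_{\dot B^{1}_{d,1}}$, whereas Theorem \ref{main-intro} only gives $\nabla u\in{\mathcal C}_b(\R_+;\dot B^{2-\gamma}_{d,1})\cap L^1(\R_+;\dot B^{2}_{d,1})$; by interpolation $\|\nabla u(\tau)\|_{\dot B^1_{d,1}}$ lies in $L^{\gamma/(\gamma-1)}(\R_+)$ but is not uniformly bounded, so "$\sup_\tau\|{\mathcal I}(\tau)\|$" is not controlled by the stated hypotheses. Consequently the uniform modulus of continuity of $t\mapsto u(t)$ in $\dot B^1_{d,1}$ — the hinge of your Barbalat step — is not established; the same issue afflicts your alternative dominated-convergence variant, which needs ${\mathcal R}$ integrable in time in a suitable negative-regularity space. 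Closing this requires a new bilinear (Bony/commutator-type) estimate for ${\mathcal I}$ expressed solely in terms of the uniformly controlled norms, e.g.\ something of the shape $\|{\mathcal I}\|_{\dot B^{3-2\gamma}_{d,1}}\lesssim\|u\|_{\dot B^{2-\gamma}_{d,1}\cap\dot B^{3-\gamma}_{d,1}}\,\|\rho-1\|_{\dot B^{1}_{d,1}\cap\dot B^{2}_{d,1}}$, and this is precisely the kind of technical work carried out in \cite{dmpw} rather than a consequence of the single inequality quoted in the survey. You correctly identify this as the obstacle, but as written it is asserted rather than proved, so the proposal is incomplete at its decisive step.
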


Here, let us note that, while the information provided by Theorem \ref{main-intro} enables the conclusion of the asymptotic decay of velocity, it is insufficient to ensure the exponential rate of the decay, since we are on the whole space $\R^d$ (unlike the ${\mathbb T}^d$ case presented in Section \ref{shvyd}).

\footnotesize
\section*{Acknowledgement}
PM was supported by the Deutsche Forschungsgemeinschaft (DFG, German Research Foundation) - 314838170, GRK 2297 MathCoRe. JP was supported by the Polish MNiSW grant Mobilno\' s\' c Plus no. 1617/MOB/V/2017/0. 
EZ was supported by the UCL Department of Mathematics Grant, grant Iuventus Plus  no. 0888/IP3/2016/74 of Ministry of Sciences and Higher Education RP, and by the Simons - Foundation grant 346300 and the Polish Government MNiSW 2015-2019 matching fund.


\end{document}